\tikzset{node distance=1.5cm, auto}
\newtheorem{theorem}{Theorem}[section]
\newtheorem{proposition}[theorem]{Proposition}
\newtheorem{lemma}[theorem]{Lemma}
\newtheorem{corollary}[theorem]{Corollary}
\theoremstyle{definition}
\theoremstyle{remark}
\newtheorem{example}[theorem]{Example}
\newtheorem{remark}[theorem]{Remark}
\newcommand{\SL}{\operatorname{SL}}
\newcommand{\Vol}{\operatorname{Vol}}
\newcommand{\Area}{\operatorname{Area}}
\newcommand{\Len}{\operatorname{Length}}
\newcommand{\pr}{\operatorname{pr}}
\newcommand{\Log}{\operatorname{Log}}
\newcommand{\sgn}{\operatorname{sgn}}
\newcommand{\inter}{\operatorname{int}}
\newcommand{\Arg}{\operatorname{Arg}}
\newcommand{\Conv}{\operatorname{Conv}}
\newcommand{\ver}{\operatorname{Vert}}
\newcommand{\ord}{\operatorname{ord}}
\def\<{\langle}
\def\>{\rangle}
\def\RR{\mathbb{R}}
\def\CC{\mathbb{C}}
\def\ZZ{\mathbb{Z}}
\def\N{\mathcal{N}}
\def\a{\mathbf{a}}
\def\b{\mathbf{b}}
\def\u{\mathbf{u}}
\def\v{\mathbf{v}}
\def\e{\mathbf{e}}
\def\n{\mathbf{n}}
\def\0{\mathbf{0}}
\def\P{\mathcal{P}}
\def\cA{\mathcal{C}}
\def\Am{\mathcal{A}}
\def\H{\mathcal{H}}
\def\E{\mathcal{E}}
\def\I{\mathcal{I}}
\def\cL{\mathcal{L}}
\def\T{\mathbf{T}}
\def\Z{\mathcal{Z}}
\title{Coamoebas of polynomials supported on circuits}
\date{\today}
\author{Jens Forsg{\aa}rd}
\address{Department of Mathematics \\ Stockholm University \\
SE-106 91 Stockholm, Sweden.}
\email{jensf@math.su.se}
\begin{document}

\begin{abstract}
We study coamoebas
of polynomials supported on circuits.
Our results include
an explicit description of the space of coamoebas, 
a relation between connected components of the coamoeba complement
and critical points of the polynomial,
an upper bound on the area of a planar coamoeba,
and a recovered bound on the number of positive solutions
of a fewnomial system. 
\end{abstract}

\maketitle

\section{Introduction}
\label{sec:Intro}
A possibly degenerate circuit is a point configuration
$A\subset \ZZ^n$ of cardinality $n+2$ which span
a sublattice $\ZZ A$ of rank $n$. That is, such that
the Newton polytope $\N_A = \Conv(A)$ is
of full dimension. A polynomial system $f(z) = 0$
is said to be supported on a circuit $A$ if
each polynomial occurring in $f(z)$ is supported on $A$.
Polynomial systems supported on circuits have recently been
been studied in the context of, e.g., real algebraic
geometry \cite{Bi11,BS11}, complexity theory \cite{BRS09},
and amoeba theory \cite{TdW13}. 
The name ``circuit'' originate from matroid theory;
see \cite{S96} and \cite{Z95} for further background.

The aim of this article is to describe geometrical and topological
properties of coamoebas of polynomials
supported on circuits. Such an investigation is motivated
not only by the vast number of applications of circuits in different areas
of geometry, 
but also since circuits provide an ideal testing ground for
open problems in coamoeba theory.

This paper is organized as follows.
In Section~\ref{sec:coamoebas} we will give a brief overview of coamoeba theory.
In Section~\ref{sec:RealPoints} we will discuss the relation between real polynomials and 
the coamoeba of the $A$-discriminant.
The main results of this paper are contained in 
Sections~\ref{sec:configurationspace}--\ref{sec:systems},
each of which can be read as a standalone text.

In Section~\ref{sec:configurationspace} we will give a complete description
of the space of coamoebas. That is, we will describe how the topology of 
the coamoeba $\cA_f$ depends on the coefficients of $f$. Describing the space of amoebas
is the topic of the articles \cite{TdW13} and \cite{TdW14},
and to fully appreciate our result one should consider these spaces simultaneously,
see e.g.\ Figure~\ref{fig:configurationspace}.
The geometry of the space of coamoebas is closely related to
the $A$-discriminantal variety, see Theorems~\ref{thm:configurationspaceinteriorpoint}
and \ref{thm:configurationspacevertex}.

In Section~\ref{sec:maximalarea} we will prove that the 
area of a planar circuit coamoeba is bounded
from above by $2\pi^2$.
That is, a planar circuit coamoeba covers at most half of the torus $\T^2$. 
Furthermore, we will prove that a circuit admits a coamoeba of maximal area
if and only if it admits an equimodular triangulation.
Note that we calculate area without multiplicities, in contrast to \cite{Mik14}.
However, the relation between (co)amoebas of maximal area and
Harnack curves is made visible also in this setting.

In Section~\ref{sec:criticalpoints} we will prove that,
under certain assumptions on $A$, the critical points of $f(z)$
are projected by the componentwise argument mapping into distinct
connected component of the complement of the coamoeba $\cA_f$.
Furthermore, this projection gives
a bijective relation between the set of critical points 
and the set of connected components of the complement of the 
closed coamoeba.
This settles a conjecture used in in \cite{JvS06}
when computing monodromy in the context of dimer models 
and mirror symmetry.

In Section~\ref{sec:systems} we will consider bivariate systems 
supported on a circuit.
If such a system is real, then it admits at most three roots in
$\RR_+^2$. 
The main contribution of this section is that
we offer a new approach to fewnomial theory.
Using our method, we will prove that if $\N_A$ is a simplex,
then, for each $\theta\in\T^2$, a complex bivariate system 
supported on $A$ has at most
two roots in the \emph{sector} $\Arg^{-1}(\theta)$.

\section{Coamoebas and lopsidedness}
\label{sec:coamoebas}
Let $A$ denote a point configuration 
$A = \left\{\a_{0}, \dots, \a_{N-1}\right\}\subset \ZZ^n$,
where $N = \#A$. By abuse of notation, we identify $A$ with the 
$(1+n)\times N$-matrix 
\begin{equation}
\label{eqn:Amatrix}
A = \left(\begin{array}{ccc} 
 1 &   \dots & 1\\
\a_0 &  \dots & \a_{N-1}\end{array}\right).
\end{equation}
The \emph{codimension} of $A$ is the integer $m = N - 1 - n$.
A \emph{circuit} is a point configuration of codimension one.
A circuit is said to be nondegenerate if it is not a pyramid over a circuit of
smaller dimension. That is, if all maximal minors of the matrix $A$ are 
nonvanishing. We will partition the set of circuits into two classes;
\emph{simplex circuits}, for which $\N_A$ is a simplex, 
and \emph{vertex circuits}, for which $A = \ver(\N_A)$.

We associate to $A$ the family $\CC_*^{A}$ consisting of 
all polynomials
\[
f(z) = \sum_{k=0}^{N-1} f_{k}\,z^{\a_k},
\]
where $f(z)$ is identified with the point
$f = (f_0, \dots, f_{N-1}) \in \CC_*^{A}$.
By slight abuse of notation, we will denote by $f_k(z)$ the monomial
function $z\mapsto f_k\,z^{\a_k}$.
We denote the algebraic set defined by $f$ by $Z(f) \subset \CC_*^n$.
The \emph{coamoeba}  $\cA_f$ is the image of $Z(f)$ under 
the componentwise argument mapping $\Arg\colon \CC_*^n \rightarrow \T^n$
defined by
\[
\Arg(z) = (\arg(z_1), \dots, \arg(z_n)),
\]
where $\T^n$ denotes the real $n$-torus.
It is sometimes beneficial to consider the multivalued argument mapping,
which gives the coamoeba as a multiply periodic subset of $\RR^n$.
Coameobas were introduced by Passare and Tsikh as a dual object, in an imprecise sense, of
the amoeba $\Am_f$.

We will say that a point $z\in \CC_*^n$ is a \emph{critical point} of $f$ if it solves
the system
\begin{equation}
\label{eqn:criticalpoint}
\partial_1 f(z) = \dots = \partial_nf(z) = 0.
\end{equation}
If in addition $z\in Z(f)$ then $z$ will be called a \emph{singular point} 
of $f$.
The $A$-discriminant $\Delta(f) = \Delta_A(f)$ is an irreducible
polynomial with domain $\CC_*^A$ which vanishes
if and only if $f$ has a singular point in $\CC_*^n$ \cite{GKZ94}.

A \emph{Gale dual} of $A$ is an integer matrix 
$B$ whose columns span the right 
$\ZZ$-kernel of $A$.
That is, $B$ is an integer $N\times m$-matrix,
of full rank, such that its maximal minors are relatively prime.
A Gale dual is unique up to the action of $\SL_m(\ZZ)$. 
The rows $\b$ of $B$ are indexed by the points $\a_k\in A$.
To each Gale dual we associate a zonotope
\[
 \Z_B = \left\{\frac\pi 2 \sum_{k=0}^{N-1} \lambda_k \b_k \, \bigg| \, |\lambda_k| \leq 1 \right\} \subset \RR^m.
\]

We will say that a triangulation $T$ of $\N_A$ is a triangulation of $A$ if $\ver(T) \subset A$.
Such a triangulation is said to be \emph{equimodular} if all maximal simplices
has equal volume.

Let $h$ be a \emph{height function} $h\colon A \rightarrow \RR$. The function $h$ induces a
triangulation $T_h$ of $A$ in the following manner. Let $\N_h$ denote the
polytope in $\RR^{n+1}$ with vertices $(\a, h(\a))$. The lower facets of $\N_h$ are the facets
whose outward normal vector has negative last coordinate. Then, 
$T_h$ is the triangulation of $A$ whose maximal simplices are the images of the lower facets
of $\N_h$ under the projection onto the first $n$ coordinates. A triangulation $T$ of
$A$ is said to be \emph{coherent} if there exists a height function $h$ such that $T =T_h$.

If $A$ is a circuit then $B$ is a column vector, unique up to sign. 
Hence, the zonotope $\Z_B$ is an interval.
Let $A_k = A\setminus\{\a_k\}$, with associated matrix $A_k$, and let 
$V_k = \Vol(A_k)$. 
If $A$ is a nondegenerate circuit, 
so that $V_k > 0$ for all $k$,
then $\N_A$ admits exactly two coherent triangulations with 
vertices in $A$ \cite{GKZ94}. 
Denote these two triangulations by $T_\delta$ for
$\delta \in \{ \pm 1\}$. Each simplex $\N_{A_k}$ occurs in exactly one
of the triangulations $T_\delta$. Hence, there is a well-defined assignment 
of signs $k\mapsto \delta_k$, where
$\delta_k \in \{\pm 1\}$, such that 
\[
 T_\delta = \big\{\N_{A_k}\big\}_{\delta_k = \delta}, \quad \delta = \pm1.
\]
Here, we have identified a triangulation with its set of maximal simplices.
As shown in \cite[chp.\ 7 and chp.\ 9]{GKZ94} and \cite[sec.\ 5]{FJ15}, a Gale dual of $A$ is given by
\begin{equation}
\label{eqn:Bcircuit}
 \b_k = (-1)^k |A_k| = \delta_k V_k.
\end{equation}
Thus, the zonotope $\Z_B$ is an interval of length $2\pi \Vol(A)$.

The $A$-discriminant $\Delta$ has $n+1$ homogeneities, one for each row
of the matrix $A$. Each Gale dual correspond to a dehomogenization
of $\Delta$. To be specific, introducing the variables
\begin{equation}
\label{eqn:ReducedVariables}
 \xi_j = \prod_{k=0}^{N-1}f_{k}^{\b_{k j}},\quad j = 1, \dots, m,
\end{equation}
there is a Laurent monomial $M(c)$ and a polynomial $\Delta_B(\xi)$ such that
\[
 \Delta_B(\xi) = M(f)\Delta_A(f).
\]
We will say that $\Delta_B$ is the \emph{reduced form} of $\Delta$.
Such a reduction yields a projection $\pr_B\colon \CC_*^A \rightarrow \CC_*^m$,
and we will say that $\CC_*^m$ is the reduced family associated to $A$, and
that $\pr_B(f)$ is the reduced form of $f$.
\begin{example}
Let $A = \{0,1,2\}$, so that $\CC_*^A$ is the family of quadratic univariate polynomials
\[
f(z) = f_0 + f_1z + f_2 z^2.
\]
Consider the Gale dual $B = (1,-2,1)^t$, and introduce the variable $\xi = f_0f_1^{-2}f_2$.
In this case the $A$-discriminant $\Delta_A$ is well-known, and we find that
\[
f_1^{-2}\Delta_A(f) = f_1^{-2}\left(f_1^2 - 4f_0 f_2\right) = 1 - 4 \xi = \Delta_B(\xi).
\]
The projection $\pr_B$ correspond to performing the change of variables $z\mapsto f_0f_1^{-1} z$,
and multiplying $f(z)$ by $f_0^{-1}$, after with we obtain the reduced family
consisting of all polynomials of the form
\[
f(z) = 1 + z + \xi z^2.
\]
\end{example}

Let $S$ denote a subset of $A$. The \emph{truncated polynomial}
$f_S$ is the image of $f$ under the projection 
$\pr_S\colon \CC_*^A\rightarrow \CC_*^{S}$.
Of particular interest is the case when
$S =\Gamma\cap A$ for some face $\Gamma$ of the Newton polytope $\N_A$
(denoted by $\Gamma\prec\N_A$).
We will write $f_\Gamma = f_{\Gamma\cap A}$. 
It was shown in \cite{NS13} that
\[
\overline{\cA}_f = \bigcup_{\Gamma\prec\N_A} \cA_{f_\Gamma},
\]
Let $\E$ denote the set of edges of $\N_A$, then the \emph{shell} of the coamoeba is defined by
\[
\H_f = \bigcup_{\Gamma\in\E} \cA_{f_\Gamma}.
\]
As an edge $\Gamma$ is one dimensional, the shell $\H_f$
is a hyperplane arrangement. Its importance can 
be seen in that each full-dimensional cell of $\H_f$ contain at most one 
connected component of the complement of $\overline{\cA}_f$, see \cite{FJ15}.

\begin{example}
The coamoeba of 
$f(z) = 1 + z_1 + z_2$,
as described in \cite{FJ15} and \cite{NS13},
can be seen in Figure~\ref{fig:simplexcoamoeba},
where it is drawn in the fundamental domains 
$[-\pi,\pi]^2$ and $[0,2\pi]^2$.
The shell $\H_f$ consist of the hyperplane arrangement
drawn in black. In this case, it is equal to the boundary of $\cA_f$.
The Newton polytope $\N_A$
and  its outward normal vectors
are drawn in the rightmost picture.
If $\H_f$ is given 
orientations in accordance with the outward normal
vectors of $\N_A$, then the interior of the coamoeba consist 
of the oriented cells.
\begin{figure}[h]
\centering
\includegraphics[width=30mm]{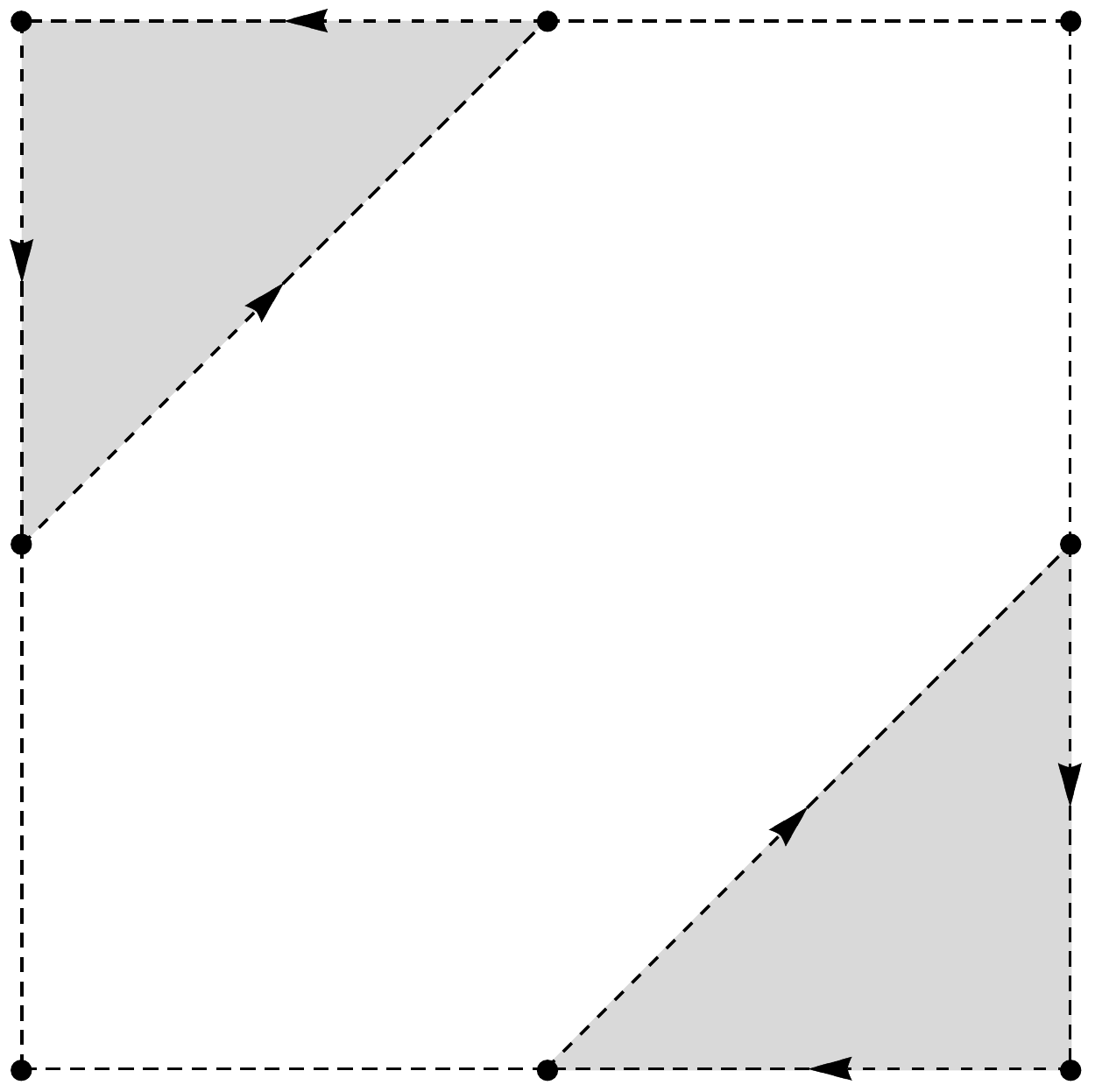}
\hspace{10mm}
\includegraphics[width=30mm]{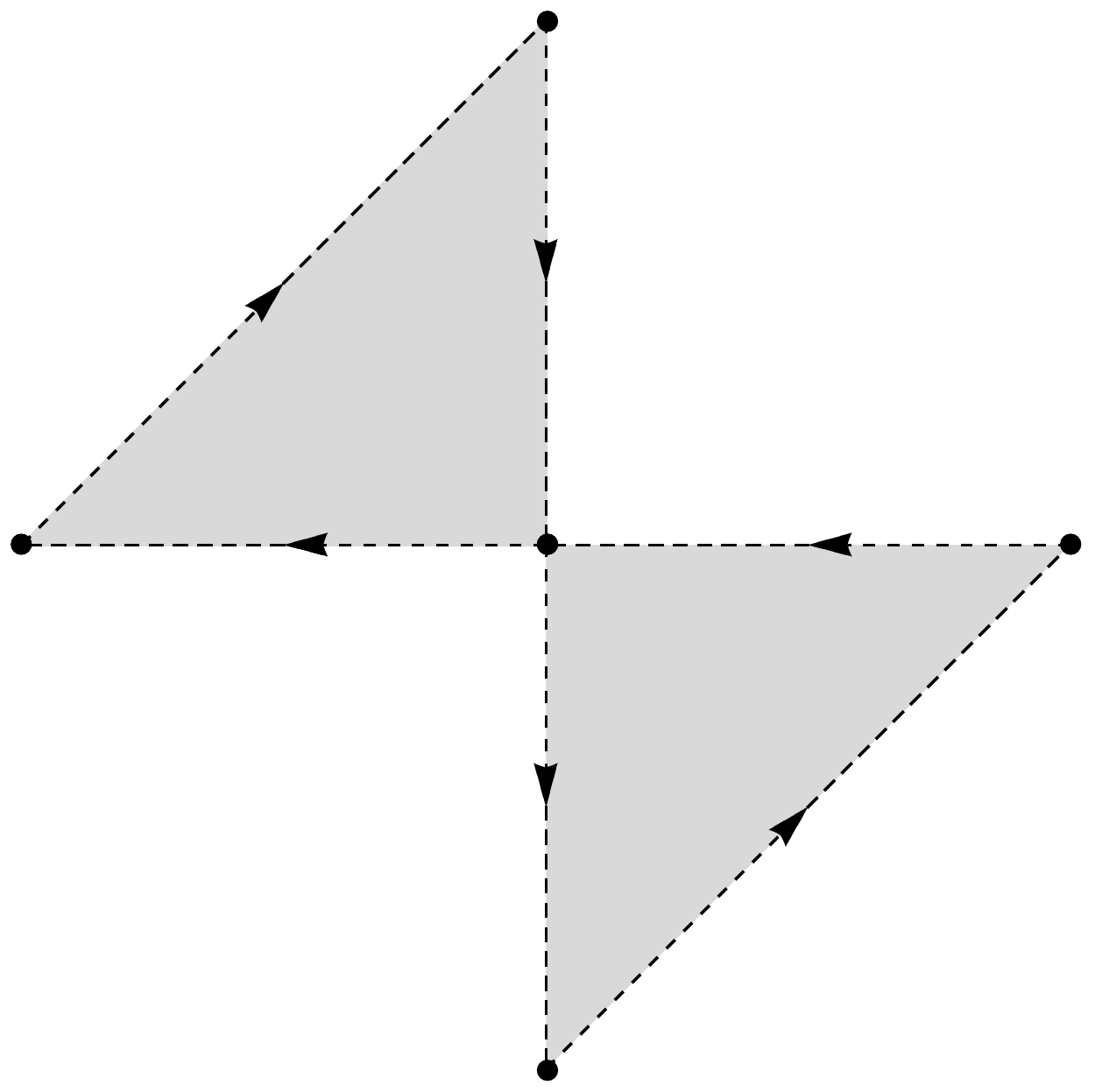}
\hspace{10mm}
\includegraphics[width=30mm]{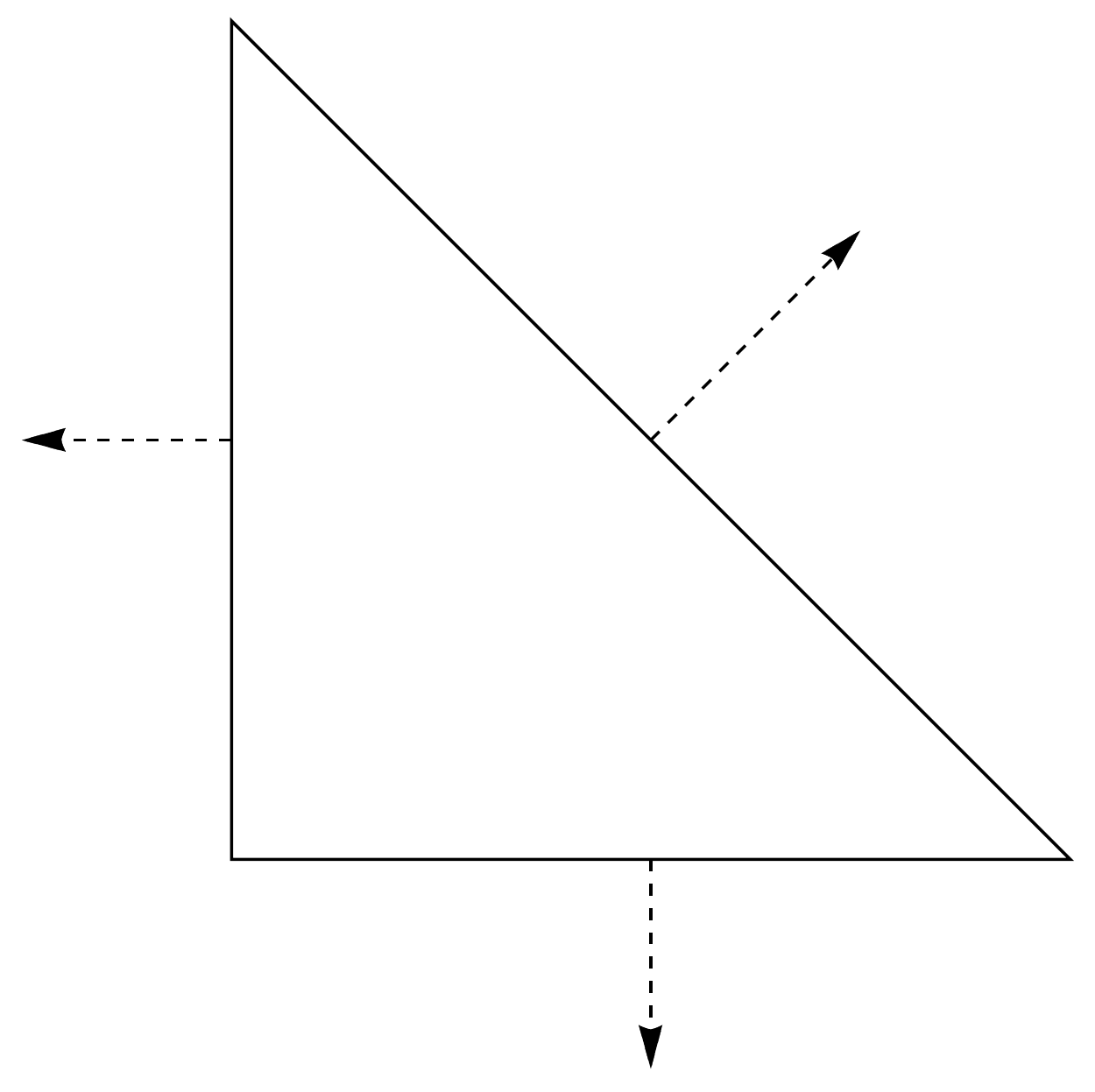}
\caption{The coamoeba of $f(z) = 1 + z_1 + z_2$ in two fundamental regions, and
the Newton polytope $\N_A$.}
\label{fig:simplexcoamoeba}
\end{figure}
\end{example}

Acting on $A$ by an \emph{integer affine transformation}
is equivalent to performing a monomial change of variables and
multiplying $f$ by a Laurent monomial. Such an action induces 
a linear transformation of the coamoeba $\cA_f$, when viewed 
in $\RR^n$ \cite{FJ15}. 
We will repeatedly use this fact to impose assumptions on $A$, e.g., 
that it contains the origin.

The polynomial $f$ is said to be \emph{colopsided} 
at a point $\theta\in \RR^n$ 
if there exist a phase $\varphi$ such that
\begin{equation}
\label{eqn:colopsidedness}
 \Re\left(e^{i\varphi}f_k(e^{i \theta})\right) \geq 0, \quad k = 0, \dots, N-1,
\end{equation}
with at least one of the inequalities \eqref{eqn:colopsidedness}
being strict. 
The motivation for this definition is as follows.
If $f$
is colopsided at $\theta$, then
\[
 \Re\left(e^{i\varphi}f(re^{i\theta})\right) = 
 \sum_{k=0}^{N-1}r^{\a_k}\,\Re\left(e^{i\varphi}f_k(e^{i\theta})\right) > 0, \quad \forall\,r\in \RR_+^n,
\]
since at least one term of the sum is strictly positive. 
Hence, colopsidedness at $\theta$ implies that $\theta \in\T^n\setminus\cA_f$. 
The colopsided coamoeba, denoted $\cL_f$, is defined as the set of all $\theta$ such that 
\eqref{eqn:colopsidedness} does not hold for any phase $\varphi$ \cite{FJ15}.
Hence, $\cA_f \subset \cL_f$. 

Each monomial $f_k(z)$ defines an affinity 
(i.e., a group homomorphism composed with a translation)
$f_k\colon \CC_*^n \rightarrow \CC_*$ by $z\mapsto f_k\,z^{\a_k}$.
We thus obtain unique affinities $|f_k|$ and $\hat f_k$ such that the
following diagram of short exact sequences 
commutes:
\begin{center}
\begin{tikzpicture}
  \node (O1) {0};
  \node (R1) [right of=O1] {$\RR_+^n$};
  \node (C1) [right of=R1] {$\CC_*^n$};
  \node (T1) [right of=C1] {$(S^1)^n$};
  \node (E1) [right of=T1] {$0$};
  \node (O2) [below of=O1] {$0$};
  \node (R2) [right of=O2] {$\RR_+$};
  \node (C2) [right of=R2] {$\CC_*$};
  \node (T2) [right of=C2] {$S^1$};
  \node (E2) [right of=T2] {$0$.};
  \draw[->] (O1) to node {} (R1);
  \draw[->] (R1) to node {} (C1);
  \draw[->] (C1) to node {} (T1);
  \draw[->] (T1) to node {} (E1);
  \draw[->] (O2) to node {} (R2);
  \draw[->] (R2) to node {} (C2);
  \draw[->] (C2) to node {} (T2);
  \draw[->] (T2) to node {} (E2);
  \draw[->] (R1) to node [swap] {$|f_k|$} (R2);
  \draw[->] (C1) to node [swap] {$f_k$} (C2);
  \draw[->] (T1) to node [swap] {$\hat f_k$} (T2);
\end{tikzpicture}
\end{center}
Notice that $\T \simeq S^1 \subset \CC$. 
We denote by $\hat f(\theta)\subset (S^1)^A \subset \CC_*^A$ 
the vector with components $\hat f_k(\theta)$.
Assume that $f$ contains the constant monomial $1$, and consider the map
$\ord_B(f)\colon \RR^n \rightarrow \RR^m$ defined by
\begin{equation}
\label{eqn:explicitordermap}
 \ord_B(f)(\theta) = \Arg_\pi\big(\hat f(\theta)\big) \cdot B,
\end{equation}
where $\Arg_\pi$ denotes the componentwise principal argument map.
It was shown in \cite{FJ15} that the map $\ord_B(f)$ induces a map
\begin{equation} 
\label{eqn:ordermap}
 \ord_B(f)\colon \T^n\setminus\overline{\cL}_f \rightarrow \left\{\Arg_\pi(f)B + 2\pi \ZZ^m\right\}\cap \inter{\Z_B}.
\end{equation}
which in turn induces a bijection between the set
of connected components of the complement of $\overline{\cL}_f$
and the finite set in the right hand side of \eqref{eqn:ordermap}.
The map $\ord_B(f)$ is known as the \emph{order map} of the lopsided coamoeba.

\begin{remark}
The requirement that $f$ contains the monomial $1$ is related to the choice of branch cut of the
function $\Arg$; in order to obtain a well-defined map, we need the right hand side of 
\eqref{eqn:explicitordermap} to be discontinuous only for $\theta$ such that two components of
$\hat f(\theta)$ are antipodal, see \cite{FJ15}.
If $f$ does not contain the constant monomial $1$, then one should fix a point $\a_k\in A$
and multiply the vector $\hat f(\theta)$ by the scalar $\hat f_k(\theta)^{-1}$ before taking principal
arguments. It is shown in \cite[thm.\ 4.3]{FJ15} that the obtained map is independent of the choice of $\a_k$.
\end{remark}

If $\theta \in \T^n\setminus\overline{\cL}_f$, then we can choose $\varphi$ such that
\[
 \Re\left(e^{i\varphi} f_k(e^{i\theta})\right) > 0, \quad k=0,\dots, N-1.
\]
That is, the boundary of $\cL_f$ is contained in the hyperplane
arrangement consisting of all $\theta$ such that two
components
of $\hat f(\theta)$ are antipodal.

It has been conjectured that the number of connected components of 
the complement of $\overline{\cA}_f$ 
is at most $\Vol(A)$.\footnote{This 
conjecture has commonly been attributed to Mikael Passare,
however, it seems to originate from a talk given by Mounir Nisse
at Stockholm University in 2007.}
A proof in arbitrary dimension has been proposed by Nisse in \cite{Nis09},
and an independent proof in the case $n=2$ was
given in \cite{FJ14}. 
That the number of connected components
of the complement of $\overline{\cL}_f$
is at most $\Vol(A)$ follows from the theory of Mellin--Barnes integral
representations of $A$-hypergeometric functions,
see \cite{BFP11} and \cite{Beu11}.

A finite set $\I\subset \T^n$ which is in a bijective correspondence
with the set of connected components of the complement of
$\overline{\cA}_f$ by inclusion, will be said to be an \emph{index
set} of the coamoeba complement. This notation will be slightly abused;
a set $\I$ of cardinality $\Vol(A)$ will be said to be an index set
of the coamoeba if each connected component of its complement
contains exactly one element of $\I$. 

The term ``lopsided'' was first used by Purbhoo in \cite{P08},
denoting the corresponding condition to \eqref{eqn:colopsidedness} for amoebas:
the polynomial $f$ is said to be \emph{lopsided} at a 
point $x\in \RR^n$ if there is a $\a_k\in A$
such that the moduli $|f_k|(x)$ is greater than the sum of the remaining
modulis. As a comparison, note that the polynomial $f$ is 
colopsided at $\theta\in \T$ if and only if the greatest
intermediate angle of the components of $\hat f(\theta)$
is greater than the sum of the remaining intermediate angles.

\section{Real points and the coameoba of the $A$-discriminant}
\label{sec:RealPoints}
We will say that $f$ is \emph{real at $\theta$}, if there is a real subvector 
space $\ell\subset \CC$ such that $\hat f_k(\theta) \in \ell$ for all 
$k = 0,\dots, N-1$. If such a $\theta$ exist then $f$ is real,
that is, after a change of variables and multiplication with a Laurent monomial 
$f\in \RR_*^A$. In this section, we will study the function $\hat f$
from the viewpoint of real polynomials. 
Our main result is the following characterization
of the coamoeba of the $A$-discriminant of a circuit.

\begin{proposition}
\label{pro:discriminantmonomialarguments}
 Let $A$ be a nondegenerate circuit, and let $\delta_k$ be as in \eqref{eqn:Bcircuit}. Then, $\Arg(f)\in \cA_{\Delta}$ if and only if after possibly multiplying
 $f$ with a constant, there is a $\theta\in \RR^n$ such that $\hat f_k(\theta) = \delta_k$ for
 all $k$.
\end{proposition}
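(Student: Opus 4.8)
The plan is to exploit the fact that for a circuit, the $A$-discriminant in reduced form is essentially linear (as the Example with $A=\{0,1,2\}$ shows, $\Delta_B(\xi) = 1 - cV_k \cdots$ up to normalization; in general $\Delta_B$ has the Horn–Kapranov parametrization as a very simple curve). So I would first pass to the reduced family: by the affine-invariance remark and by multiplying $f$ with a constant, I may assume $f$ contains the monomial $1$, and I work with the single reduced variable $\xi = \prod_k f_k^{\b_k}$ where $\b_k = \delta_k V_k$. The condition $\Arg(f)\in\cA_\Delta$ then translates to a condition purely on $\arg(\xi)$, namely $\xi$ lies on the image of the discriminant curve under $\Arg$.

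\textbf{Step 1: Identify the argument of the discriminant.} Using the Horn–Kapranov (or Gale-dual) parametrization of the reduced discriminant — which for a codimension-one configuration is a rational curve with a one-term parametrization — compute $\Arg(\Delta_B^{-1}(0))$ explicitly. For a circuit, the reduced discriminant is (up to monomial) of the form $\prod_{\delta_k=1}(\text{linear}) \pm \prod_{\delta_k=-1}(\text{linear})$ evaluated along a line, so its zero locus in the $\xi$-torus is a single point: $\arg(\xi)$ must equal a fixed value determined by the $V_k$, or equivalently $\xi\in\RR_{<0}$ or $\RR_{>0}$ after the standard normalization. The upshot is that $\Arg(f)\in\cA_\Delta$ if and only if $\sum_k \b_k \arg(f_k)$ equals a prescribed constant mod $2\pi$, i.e.\ $\prod_k f_k^{\b_k}$ is real of a prescribed sign.

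\textbf{Step 2: Relate to $\hat f(\theta) = \delta$.} Suppose such a $\theta$ exists with $\hat f_k(\theta) = \delta_k$. Recall $\hat f_k(\theta) = \exp(i(\arg f_k + \langle \a_k,\theta\rangle))$ (using the multivalued argument), so $\hat f_k(\theta) = \delta_k$ says $\arg f_k + \langle\a_k,\theta\rangle \equiv \tfrac{\pi}{2}(1-\delta_k) \pmod{2\pi}$. Now take the $B$-combination: $\sum_k \b_k\arg f_k + \sum_k \b_k\langle \a_k,\theta\rangle \equiv \tfrac\pi2\sum_k\b_k(1-\delta_k)$. Since the columns of $B$ span the kernel of $A$, the middle term vanishes: $\sum_k \b_k \a_k = 0$ (and $\sum_k \b_k = 0$). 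Hence $\sum_k \b_k \arg f_k \equiv \tfrac\pi2\sum_k\b_k(1-\delta_k) = -\tfrac\pi2\sum_k \b_k\delta_k = -\tfrac\pi2\sum_k V_k$, a constant depending only on $A$. This is exactly the condition from Step 1 that $\Arg(f)\in\cA_\Delta$ — I just need to check the constant matches (which amounts to checking a parity/sign of $\sum V_k$ against the sign in the reduced discriminant; this is the one genuinely fiddly bookkeeping point). Conversely, if $\Arg(f)\in\cA_\Delta$, then $\sum_k\b_k\arg f_k$ is the prescribed constant, so the linear system $\arg f_k + \langle\a_k,\theta\rangle \equiv \tfrac\pi2(1-\delta_k)\pmod{2\pi}$ in the unknown $\theta\in\RR^n$ — which is $N$ congruences in $n$ unknowns — has its unique compatibility condition (the $B$-combination) satisfied, hence is solvable for $\theta$; pick such a $\theta$ and then $\hat f_k(\theta) = \delta_k$ for all $k$.

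\textbf{Main obstacle.} The conceptual content is light once one has the parametrization of $\cA_\Delta$ for a circuit in hand; the real work is Step 1 — pinning down the argument of the reduced discriminant and, in particular, getting the sign/constant $-\tfrac\pi2\sum_k V_k$ to agree on both sides without an off-by-$\pi$ error. I would handle this by deriving the reduced discriminant from \eqref{eqn:Bcircuit} via the Horn–Kapranov uniformization $\xi = \prod_k(\langle\b,t\rangle)^{\b_k}$ restricted to the circuit case (where $B$ is a vector, so $t$ is a scalar and the parametrization degenerates to a constant argument), and then checking the normalization against the rank-one Example $A=\{0,1,2\}$, where $\delta = (1,-1,1)$, $V = (1,2,1)$, $\sum V_k = 4$, and indeed the solvability of $\hat f_k(\theta)=\delta_k$ with $f = 1 + z + \xi z^2$ forces $\xi < 0$, i.e.\ $\xi \in \Delta_B^{-1}(0)$-argument-locus since $\Delta_B(\xi) = 1 - 4\xi$ vanishes at $\xi = 1/4 > 0$ — wait, that is positive, so the matching constant is $+\tfrac\pi2\sum V_k$ or there is a reflection $\delta_k \mapsto -\delta_k$ hidden in "possibly multiplying $f$ with a constant"; resolving exactly this discrepancy, by tracking how multiplication of $f$ by a unimodular constant $e^{i\psi}$ shifts each $\hat f_k(\theta)$, is precisely the delicate step and where I would spend most of the care.
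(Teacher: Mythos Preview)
Your strategy is correct and is precisely the Horn--Kapranov route the paper mentions immediately after the proposition but then deliberately sets aside. The paper's written proof is different: for the converse direction it fixes an index $\kappa$, specializes $f_k=\b_k$ for $k\neq\kappa$, and then \emph{counts}, using Lemmas~\ref{lem:simplexequalarguments} and~\ref{lem:uniquelastarguments}, that the $V_\kappa$ values of $\arg f_\kappa$ making $\hat f(\theta)=\delta$ solvable coincide with the $V_\kappa$ arguments of the roots of the specialized binomial $f_\kappa^{V_\kappa}-\b_\kappa^{V_\kappa}$. Your direct linear-algebra argument via the Gale dual is shorter for this proposition alone; the paper takes the longer path because those two lemmas are needed independently in Section~\ref{sec:configurationspace}.

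Two small corrections to your execution. First, in Step~2 you write the system in the unknown $\theta\in\RR^n$ and claim a \emph{unique} compatibility condition; in fact the right kernel of the $n\times(n{+}2)$ matrix $(\a_0,\dots,\a_{n+1})$ is two-dimensional, so there are two conditions. The second one is absorbed exactly by the constant $e^{i\psi}$ you allude to at the end --- the clean formulation is to solve for $(\psi,\theta)\in\RR^{1+n}$ against the full matrix $A^t$, whose single compatibility condition is $B^t d\in 2\pi\ZZ$ (here you also need, and should state, that the entries of $B$ are coprime). Second, your sanity check is a sign slip, not a genuine discrepancy: with $f=1+z+\xi z^2$ the equation $\hat f_1(\theta)=-1$ forces $\theta=\pi$, whence $\hat f_2(\theta)=e^{i(\arg\xi+2\pi)}=e^{i\arg\xi}$, so the condition is $\xi>0$, not $\xi<0$. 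This matches both the discriminant zero $\xi=1/4$ and your constant $-\tfrac{\pi}{2}\sum_k V_k=-2\pi\equiv 0\pmod{2\pi}$; there is no off-by-$\pi$ to chase.
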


If $A$ is a circuit and $B$ is a Gale dual of $A$ then the Horn--Kapranov
parametrization of the reduced discriminant $\Delta_B$ can be lifted to 
a parametrization of the discriminant surface $\Delta$ as
\[
 z \mapsto \left(\b_0 z^{\a_0}, \dots, \b_{N-1}z^{\a_{N-1}}\right).
\]
Taking componentwise arguments, we obtain a simple proof
Proposition \ref{pro:discriminantmonomialarguments}.
In particular, the proposition can be interpreted as
a coamoeba version of the Horn--Kapranov parametrization valid
for circuits.
Our proof of Proposition \ref{pro:discriminantmonomialarguments}
will be more involved, however, 
for our purposes the lemmas contained in this section are of
equal importance.

\begin{lemma}
\label{lemma:realpointsduallattice}
Assume that the polynomial $f$ is real at $\theta_0\in \RR^n$. 
Then, $f$ is real at $\theta\in \RR^n$ 
if and only if $\theta \in \theta_0 + \pi L$, where $L$ is the dual lattice of $\ZZ A$.
\end{lemma}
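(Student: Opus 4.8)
\textbf{Proof plan for Lemma~\ref{lemma:realpointsduallattice}.}

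The plan is to unwind the definition of ``$f$ is real at $\theta$'' into a condition on the arguments $\Arg_\pi(\hat f(\theta))$ that makes the additive/periodic structure visible. Recall that $\hat f_k(\theta) = f_k e^{i\langle \a_k,\theta\rangle}$ up to the normalization of the diagram, so $\arg \hat f_k(\theta) = \arg f_k + \langle \a_k,\theta\rangle \pmod{2\pi}$. Saying $f$ is real at $\theta$ means there is a real line $\ell = e^{i\psi}\RR \subset \CC$ containing every $\hat f_k(\theta)$, i.e.\ $\arg \hat f_k(\theta) \equiv \psi \pmod{\pi}$ for all $k$, for some common $\psi$. Subtracting the (assumed) relation at $\theta_0$, this is equivalent to
\[
 \langle \a_k - \a_0, \theta - \theta_0\rangle \equiv \langle \a_k - \a_0, 0\rangle \pmod{\pi}\quad\text{after absorbing a global shift,}
\]
more precisely to $\langle \a_k,\theta-\theta_0\rangle \equiv c \pmod{\pi}$ for all $k$ and some constant $c$ independent of $k$. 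First I would reduce to the case $\0\in A$ (using the integer affine invariance recorded just before the colopsidedness definition), so that the constant $c$ is forced to be $0$ by the index $k$ with $\a_k=\0$; then the condition becomes simply $\langle \a_k,\theta-\theta_0\rangle \in \pi\ZZ$ for all $k$.

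Next I would translate this into lattice language. Writing $\eta = \theta-\theta_0$, the condition ``$\langle \a,\eta\rangle\in\pi\ZZ$ for every $\a\in A$'' is, since $A$ spans $\ZZ A$ over $\ZZ$, equivalent to ``$\langle v,\eta\rangle\in\pi\ZZ$ for every $v\in\ZZ A$,'' which is by definition $\eta\in\pi L$ where $L=\{\eta : \langle v,\eta\rangle\in\ZZ\ \forall v\in\ZZ A\}$ is the dual lattice of $\ZZ A$. This gives $\theta\in\theta_0+\pi L$, and the argument is reversible: if $\theta\in\theta_0+\pi L$ then every $\langle\a_k,\theta\rangle$ differs from $\langle\a_k,\theta_0\rangle$ by an element of $\pi\ZZ$, hence $\hat f_k(\theta)=\pm\hat f_k(\theta_0)$ lies on the same real line $\ell$ that worked at $\theta_0$, so $f$ is real at $\theta$.

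The only genuinely delicate point is bookkeeping the global phase $\psi$ (equivalently the constant $c$): a priori different $\theta$ could use a different real line $\ell$, and one must check that after the normalization $\0\in A$ this freedom collapses to $c\in\pi\ZZ$, which can then be folded into the statement ``$\hat f_k(\theta)\in\ell$'' by replacing $\ell$ with $e^{ic}\ell$ — but since $e^{ic}=\pm1$ this is the same line. I expect this is the main obstacle, in the sense of being the only step requiring care rather than a one-line computation; everything else is the standard duality between a spanning set of a lattice and its dual lattice. I would also remark that nondegeneracy of the circuit is not needed here — only that $\N_A$ is full-dimensional, so that $\ZZ A$ has finite index in $\ZZ^n$ and $L$ is a genuine (full-rank) lattice, which is exactly the standing hypothesis on point configurations in this paper.
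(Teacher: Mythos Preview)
Your proposal is correct and follows essentially the same approach as the paper: normalize so that $\0\in A$, which forces the free phase $c$ (your $\psi-\psi_0$) to lie in $\pi\ZZ$, reducing the condition to $\langle\a,\theta-\theta_0\rangle\in\pi\ZZ$ for all $\a\in A$, i.e.\ $\theta-\theta_0\in\pi L$. The paper streamlines the bookkeeping slightly by also translating so that $\theta_0=0$ and $\ell_0=\RR$ (equivalently, making all coefficients real), which lets the ``$\ell$ must equal $\RR$'' step replace your discussion of the constant $c$; but the logical content is the same.
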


\begin{proof}
 After translating $\theta$ and multiplying $f$ with a Laurent monomial, 
 we can assume that $\theta_0 = 0$,
 that $\ell_0 = \RR$, and that $f$ contains the monomial $1$. 
 That is, all coefficients of $f$ are real, in particular proving 
 \emph{if}-part of the statement.
 To show the \emph{only if}-part, notice first  that
 $\hat f (\theta) \subset \ell$ implies that $\ell$ contains 
 both the origin and $1$. That is, $\ell = \RR$.
 Furthermore, $\hat f (\theta)\subset \RR$ only if for each 
 $\a\in A$ there is a $k\in \ZZ$ such that $\<\a, \theta\> = \pi k$,
 which concludes the proof.
\end{proof}

The $A$-discriminant $\Delta$ related to a circuit
has been described in 
\cite[chp.\ 9, pro.\ 1.8]{GKZ94} where the formula
\begin{equation}
\label{eqn:CircuitDiscriminant}
 \Delta(f) = \prod_{\delta_k = 1} \b_k^{\b_k}\,\prod_{\delta_k = -1} f_k^{-\b_k} - \prod_{\delta_k = -1}\b_k^{-\b_k}\,\prod_{\delta_k = 1}f_k^{\b_k}
\end{equation}
was obtained. In particular, $\Delta$ is a binomial.
As the zonotope $\Z_B$ is a symmetric interval of length $2\pi\Vol(A)$,
the image of the map $\ord_B(f)$ is of cardinality $\Vol(A)$ unless
\begin{equation} 
\label{eqn:translationinzonotope}
 \Arg_\pi(f)B \equiv 2\pi\Vol(A) \mod 2\pi.
\end{equation}
In particular, the complement of $\overline{\cA}_f$ has the maximal 
number of connected
components (i.e., $\Vol(A)$-many) unless the equivalence 
\eqref{eqn:translationinzonotope} holds.

\begin{lemma}
\label{lem:simplexequalarguments}
For each $\kappa = 0,1, \dots, n+1$,
there are exactly $\Vol(A_\kappa)$-many points $\theta \in \T$ such that
\begin{equation}
\label{eqn:simplexequalarguments}
\hat f_k(\theta) = \delta_k, \quad \forall\,k\neq \kappa.
\end{equation}
\end{lemma}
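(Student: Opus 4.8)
The plan is to fix $\kappa$, reduce to a one‑dimensional torus $\T^1$ by a monomial change of variables, and count solutions of the system \eqref{eqn:simplexequalarguments} using the Gale dual $B$. First I would use the integer affine transformation trick (together with Lemma~\ref{lemma:realpointsduallattice}, normalizing so that some $\theta_0$ realizes $f$ as real) to assume $\a_\kappa = \0$ and, after multiplying $f$ by a suitable constant, that $\delta_\kappa$‑normalization is arranged so that the constant monomial is $1$. Then the conditions $\hat f_k(\theta) = \delta_k$ for $k \neq \kappa$ become the requirement that the remaining $n+1$ monomials $\hat f_k(\theta)$ are each real with the prescribed sign $\delta_k$. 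Since $A_\kappa = A\setminus\{\a_\kappa\}$ has $n+1$ points and $\N_{A_\kappa}$ is a full‑dimensional simplex (by nondegeneracy of the circuit), the vectors $\{\a_k\}_{k\neq\kappa}$ form a basis of a finite‑index sublattice $\ZZ A_\kappa \subset \ZZ^n$ of index $V_\kappa = \Vol(A_\kappa)$.

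The key step is then a lattice count. The real‑with‑sign conditions $\hat f_k(\theta) = \delta_k$ say that $\langle \a_k, \theta\rangle \equiv c_k \pmod{2\pi}$ for $k\neq\kappa$, where $c_k \in \{0,\pi\}$ is determined by $\delta_k$ and by $\Arg_\pi(f_k)$ (after the normalization). Writing $\theta$ in the dual basis to $\{\a_k\}_{k\neq\kappa}$, this is a triangular system that has exactly one solution in the torus $\RR^n/(\pi\,\text{dual lattice of }\ZZ A_\kappa)$; and that torus covers $\T^n = \RR^n/2\pi\ZZ^n$ with fiber cardinality equal to $[\ZZ^n : \ZZ A_\kappa] = V_\kappa$. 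Hence there are exactly $V_\kappa = \Vol(A_\kappa)$ solutions $\theta\in\T^n$. When $n=1$ this is literally the statement; for the stated Lemma (where $\theta\in\T$, i.e.\ $n=1$) the argument is the special case, but I would phrase it so that it also records the general‑$n$ count since that is what is used later.

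The main obstacle I expect is bookkeeping of signs and branch cuts: one must check that the constant by which $f$ is multiplied can indeed be chosen so that \emph{all} of the $n+1$ equations $\hat f_k(\theta)=\delta_k$ (for $k\neq\kappa$) are simultaneously solvable, rather than merely congruent modulo sign — in other words, that the affine conditions $\langle \a_k,\theta\rangle \equiv c_k$ are consistent. This is where \eqref{eqn:Bcircuit} enters: the relation $\sum_k \b_k \a_k = 0$ with $\b_k = \delta_k V_k$ forces a compatibility among the $c_k$'s that is automatically satisfied, because we have dropped the index $\kappa$ and the remaining $\a_k$ are linearly independent, so no linear relation constrains them. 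Thus consistency is free once $\a_\kappa$ is removed, and the count is clean. The remaining care is simply to invoke Lemma~\ref{lemma:realpointsduallattice} to be sure the solution set is exactly a coset of $\pi L_\kappa$ where $L_\kappa$ is the dual lattice of $\ZZ A_\kappa$, and to compute $[\,2\pi\ZZ^n : 2\pi\ZZ A_\kappa\,] = V_\kappa$.
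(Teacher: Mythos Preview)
Your approach is in spirit the same lattice count that the paper has in mind, but there is a genuine off-by-one error that breaks the argument. You translate so that $\a_\kappa = \0$, and then assert that the remaining vectors $\{\a_k\}_{k\neq\kappa}$ are linearly independent and form a basis of a sublattice of $\ZZ^n$ of index $V_\kappa$. But $A_\kappa = A\setminus\{\a_\kappa\}$ has $n+1$ points, so after removing $\a_\kappa$ you are left with $n+1$ vectors in $\ZZ^n$; these cannot be linearly independent, let alone a basis. Your ``consistency is free'' argument rests precisely on this false independence, and the index identity $[\ZZ^n:\ZZ A_\kappa]=V_\kappa$ fails as well: for $n=1$, $A=\{0,1,2\}$, $\kappa=1$, translating $\a_1$ to the origin gives $A_\kappa=\{-1,1\}$, which generates all of $\ZZ$, whereas $V_1=2$.

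The fix is to translate a point \emph{of} $A_\kappa$, say $\a_{j_0}$ with $j_0\neq\kappa$, to the origin. Then the remaining $n$ points $\{\a_k\}_{k\neq\kappa,\,k\neq j_0}$ form an honest basis of a sublattice of index $|\det(\a_k)_{k\neq\kappa,j_0}| = V_\kappa$, and the $n$ congruences $\langle\a_k,\theta\rangle\equiv c_k\pmod{2\pi}$ have exactly $V_\kappa$ solutions in $\T^n$. This is what the paper's one-line proof means by ``the statement follows from the case when $A_\kappa$ consists of the vertices of the standard simplex'': the linear map sending the standard basis to $(\a_k)_{k\neq\kappa,j_0}$ carries the standard simplex to $A_\kappa$ and induces a $V_\kappa$-to-$1$ self-map of $\T^n$. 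Note that with this correction the condition at $k=j_0$ becomes $\arg(f_{j_0})=\arg(\delta_{j_0})$, a constraint on $f$ rather than on $\theta$; this is the genuine compatibility issue, and in the paper's applications it is absorbed either by the standing real-polynomial setting of Section~\ref{sec:RealPoints} or by the phrase ``after possibly multiplying $f$ with a constant'' in Proposition~\ref{pro:discriminantmonomialarguments}. Your invocation of Lemma~\ref{lemma:realpointsduallattice} and the $\pi$-lattice is also slightly off: that lemma concerns realness (sign $\pm1$, period $\pi$), whereas here the signs $\delta_k$ are prescribed, so the relevant period is $2\pi$.
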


\begin{proof}
By applying an integer affine transformation, the statement follows from the case
when $A_\kappa$ consist of the vertices of the standard simplex.
\end{proof}

\begin{lemma}
\label{lem:uniquelastarguments}
Fix $\kappa \in \{0,\dots, n+1\}$. 
For each $\theta$ fulfilling \eqref{eqn:simplexequalarguments}, 
let $\varphi_\theta \in \T$ be defined by the condition that 
if $\arg_\pi(f_{\kappa})= \varphi_\theta$ then
\begin{equation}
\label{eqn:lastargumentdefinedbytheta}
 \hat f_{\kappa}(\theta) = \delta_\kappa.
\end{equation}
Assume that $\ZZ A = \ZZ^n$. Then, the numbers $\varphi_\theta$ are distinct.
\end{lemma}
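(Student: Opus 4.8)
The plan is to set up coordinates adapted to the vertex $\a_\kappa$ and then read off the condition \eqref{eqn:lastargumentdefinedbytheta} as an explicit affine-linear equation in $\theta$. First I would use an integer affine transformation — permissible since $\ZZ A = \ZZ^n$, so such transformations act transitively enough on the configuration — to arrange that $\a_\kappa = \0$, so that $f_\kappa(z) = f_\kappa$ is the constant monomial and $\hat f_\kappa(\theta) = f_\kappa/|f_\kappa|$ is \emph{independent of $\theta$}. Wait — that cannot be right if the $\varphi_\theta$ are to vary with $\theta$; the point is rather that $\varphi_\theta = \arg_\pi(f_\kappa)$ is the \emph{prescribed} value of the constant coefficient's argument that makes \eqref{eqn:simplexequalarguments} hold simultaneously with \eqref{eqn:lastargumentdefinedbytheta}. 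So instead I would normalize using the remaining points: after an integer affine transformation I may assume $A_\kappa = \{\0, \e_1, \dots, \e_n\}$ is the standard simplex (this is exactly the reduction invoked in Lemma~\ref{lem:simplexequalarguments}), and that $\a_\kappa$ is the remaining point, say $\a_\kappa = (a_1,\dots,a_n) \in \ZZ^n$. Under the condition \eqref{eqn:simplexequalarguments} the monomials indexed by the standard simplex have prescribed arguments, which pins down $\theta = (\theta_1,\dots,\theta_n)$ modulo $\pi\ZZ^n$ up to the $\Vol(A_\kappa)$-fold ambiguity of Lemma~\ref{lem:simplexequalarguments}.

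Next I would write out condition \eqref{eqn:lastargumentdefinedbytheta} explicitly. With $A_\kappa$ the standard simplex, $\hat f_0(\theta) = \delta_0$ fixes $\arg_\pi(f_0)$, and $\hat f_j(\theta) = \delta_j$ for $j=1,\dots,n$ gives $\arg_\pi(f_j) + \theta_j \equiv$ (value in $\{0,\pi\}$ determined by $\delta_j$) $\pmod{2\pi}$, so each $\theta_j$ is determined modulo $\pi$ by the (fixed) coefficients. The genuine freedom is the choice of lift $\theta_j \in \{\beta_j, \beta_j + \pi\}$ for each $j$; the $\Vol(A_\kappa)$ solutions of Lemma~\ref{lem:simplexequalarguments} correspond (via $\a_\kappa$ and the parity count $\langle \a_\kappa, \theta\rangle$) to the distinct residues these lifts produce. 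Then \eqref{eqn:lastargumentdefinedbytheta} reads $\arg_\pi(f_\kappa) + \langle \a_\kappa, \theta\rangle \equiv (\text{value determined by }\delta_\kappa) \pmod{2\pi}$, i.e.
\[
 \varphi_\theta \;\equiv\; c_\kappa - \langle \a_\kappa, \theta\rangle \pmod{2\pi}
\]
for a constant $c_\kappa$ independent of $\theta$. So two solutions $\theta, \theta'$ of \eqref{eqn:simplexequalarguments} give the same $\varphi_\theta$ if and only if $\langle \a_\kappa, \theta - \theta'\rangle \equiv 0 \pmod{2\pi}$.

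The crux is therefore: the $\Vol(A_\kappa)$ points $\theta$ satisfying \eqref{eqn:simplexequalarguments} have \emph{pairwise distinct} values of $\langle \a_\kappa, \theta\rangle \bmod 2\pi$. These points are $\theta = \beta + \pi\varepsilon$ with $\varepsilon \in \{0,1\}^n$ (and $\beta$ the fixed base lift), subject to the single constraint coming from the fact that the full configuration $A$ must be nondegenerate and $\ZZ A = \ZZ^n$; concretely the admissible $\varepsilon$ are a coset-like subset of $(\ZZ/2)^n$ whose image under $\varepsilon \mapsto \langle \a_\kappa, \varepsilon\rangle \bmod 2$ — together with the integer part of $\langle \a_\kappa, \beta\rangle/\pi$ — must be injective modulo the relevant lattice. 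I expect the main obstacle to be exactly this injectivity statement: showing that the linear functional $\varepsilon \mapsto \langle \a_\kappa, \pi\varepsilon\rangle \bmod 2\pi$ separates the $\Vol(A_\kappa)$ solution points. The natural tool is the Gale-dual description \eqref{eqn:Bcircuit}: $\b_\kappa = \delta_\kappa V_\kappa = \delta_\kappa \Vol(A_\kappa)$, and $\sum_k \b_k \a_k = 0$ (the Gale relation), which ties $\langle \a_\kappa, \theta\rangle$ to $\sum_{k\neq\kappa} \b_k \langle \a_k, \theta\rangle$; since on the standard-simplex part these inner products are the prescribed multiples of $\pi$, one gets $\langle \a_\kappa, \theta\rangle \equiv \pi(\text{something})/V_\kappa \cdot (\dots)$, and the primitivity of $B$ (gcd of maximal minors $=1$) forces the $V_\kappa$ values $\langle \a_\kappa,\theta\rangle$ to hit $V_\kappa$ distinct residues mod $2\pi$. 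Making this last divisibility/primitivity argument precise — connecting the combinatorial count $\Vol(A_\kappa)$ from Lemma~\ref{lem:simplexequalarguments} with the arithmetic of the Gale vector — is where the real work lies; the rest is bookkeeping with principal arguments and the $\{0,\pi\}$ ambiguities.
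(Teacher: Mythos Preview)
Your reduction ``after an integer affine transformation I may assume $A_\kappa = \{\0,\e_1,\dots,\e_n\}$'' is not valid. Integer affine transformations preserve normalized volume, so this is only possible when $\Vol(A_\kappa)=1$, which is exactly the trivial case with a single $\theta$. Lemma~\ref{lem:simplexequalarguments} does not claim one can normalize to the standard simplex; it only says the \emph{count} of solutions reduces to that case. Consequently your description of the solution set as $\beta+\pi\{0,1\}^n$ is also off: once $A_\kappa$ really is the standard simplex, each condition $\hat f_j(\theta)=\delta_j$ pins down $\theta_j$ modulo $2\pi$, not modulo $\pi$, so there is exactly one solution and no $\{0,1\}^n$ freedom to speak of.

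More to the point, you had already finished and did not notice. Your formula $\varphi_\theta\equiv c_\kappa-\<\a_\kappa,\theta\>\pmod{2\pi}$ and the resulting criterion $\<\a_\kappa,\theta-\theta'\>\equiv 0\pmod{2\pi}$ are correct. But $\theta$ and $\theta'$ already satisfy \eqref{eqn:simplexequalarguments}, which gives $\<\a_k,\theta-\theta'\>\equiv 0\pmod{2\pi}$ for every $k\neq\kappa$. Combining, $\<\a,\theta-\theta'\>\in 2\pi\ZZ$ for all $\a\in A$, and since $\ZZ A=\ZZ^n$ this forces $\theta-\theta'\in 2\pi\ZZ^n$, i.e.\ $\theta=\theta'$ in $\T^n$. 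This is precisely the paper's argument, which phrases the same step through Lemma~\ref{lemma:realpointsduallattice}: after translating $\theta_1$ to $0$, the condition says $\theta_2/2$ lies in the lattice $\pi L$ of real points, and $\ZZ A=\ZZ^n$ gives $L=\ZZ^n$. The detour through an explicit enumeration and a Gale-dual divisibility argument is unnecessary, and in the form you sketch it (dividing the relation $\b_\kappa\<\a_\kappa,\theta\>=-\sum_{k\neq\kappa}\b_k\<\a_k,\theta\>$ by $\b_\kappa=\pm V_\kappa$) it would not yield injectivity without essentially reproving the dual-lattice statement anyway.
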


\begin{proof}
We can assume that $\a_0 = \0$ and that $f_0 = 1$.
Assume that $\varphi_{\theta_1} = \varphi_{\theta_2}$.
Then,
\[
 \<\a, \theta_2\> = \<\a, \theta_1\> + 2\pi r, \quad \forall \a\in A.
\]
By translating, we can assume that $\theta_1 = 0$, and hence, since $1$ 
is a monomial of $f$,
that all coefficients are real. 
Consider the lattice $L$ consisting of all points 
$\theta \in \RR^n$ such that $f$ is real at $\theta$.
Since $\ZZ A = \ZZ^n$, Lemma~\ref{lemma:realpointsduallattice} shows
that $L =\pi\ZZ^n$. However, we find that
\[
 \Big\<\a, \frac{\theta_2}2\Big\>  = \pi r,
\]
and hence $\frac{\theta_2}2 \in L$. This implies that $\theta_2 \in 2\pi\ZZ^n$,
and hence $\theta_2 = 0$ in $\T^n$.
\end{proof}

\begin{proof}[Proof of Proposition~\ref{pro:discriminantmonomialarguments}]
 Assume first that there is a $\theta$ as in the statement of the proposition,
 where we can assume that $\theta = 0$.
 Then, $\arg(f_k) = \arg(\delta_k)$.
 It follows that the monomials
 \[
  \prod_{\delta_k = 1} \b_k^{\b_k}\,\prod_{\delta_k = -1} f_k^{-\b_k}
  \quad\text{and}\quad
  \prod_{\delta_k = -1} \b_k^{\b_k}\,\prod_{\delta_k = 1} f_k^{-\b_k}
 \]
 have equal signs. Therefor, $\Delta$ vanishes 
 for $f_k = \delta_k|\b_k|$, implying that $\Arg(f)\in \cA_{\Delta}$.
 
 For the converse, fix $\kappa$, and reduce $f$
 by requiring that $f_k = \delta_k |\b_k| = \b_k$ for $k\neq \kappa$.
 Let $\I$ denote the set of points $\theta \in \T^n$ such that 
 $\hat f_k(\theta) = \delta_k$ for $k\neq \kappa$,
 which by Lemma~\ref{lem:simplexequalarguments} has cardinality $V_\kappa$. 
 By Lemma~\ref{lem:uniquelastarguments}, the set $\I$
 is in a bijective correspondence with values of 
 $\arg(f_\kappa)$ such that 
 $\hat f_\kappa (\theta) = \delta_\kappa$. 
 Therefor, we find that $\Delta$ vanishes at $f_\kappa = V_\kappa e^{i\varphi}$ for
 each $\varphi\in \I$.
 However, the discriminant $\Delta$ specializes, up to a constant, to the binomial
 \[
  \Delta_\kappa(f_\kappa) = f_\kappa^{|\b_\kappa|} - \b_{\kappa}^{|\b_\kappa|} = f_\kappa^{V_\kappa} - \b_{\kappa}^{V_\kappa},
 \]
 which has exactly $V_\kappa$-many solutions in $\CC_*$ of distinct arguments. 
 Hence, since $\Delta(f) = 0$ by assumption,
 and comparing the number of solutions, it holds that $\hat f_\kappa (\theta) = \delta_\kappa$ for one of the points
 $\theta\in \I$.
\end{proof}

\section{The space of coamoebas}
\label{sec:configurationspace}

Let $U_k\subset\CC_*^A$ denote the set of all $f$ such that 
the number of connected components 
of the complement of $\overline{\cA}_f$ is $\Vol(A)-k$. 
Describing the sets $U_k$ is known as the problem of describing the \emph{space of coamoebas}
of $\CC_*^A$.
In this section, we will give explicit descriptions of the sets $U_k$ in the case when $A$ is a circuit.
As a first observation we note that the image of the map $\ord_B(f)$ is
at least of cardinality $\Vol(A)-1$, implying that
\[
 \CC_*^A = U_0 \cup U_1,
\]
and in particular $U_k = \emptyset$ for $k \geq 2$. Hence, it suffices for us
to give an explicit description of the set $U_1$.
Our main result is the following two theorems, highlighting also the
difference between vertex circuits and simplex circuits.
Note that $\Delta$ is a real polynomial \cite{GKZ94}.

\begin{theorem}
\label{thm:configurationspaceinteriorpoint}
Assume that $A$ is a nondegenerate simplex circuit, with $\a_{n+1}$ as an interior point. 
Choose $B$ such that $\delta_{n+1} =-1$,
and let $\Delta$ be as in 
\eqref{eqn:CircuitDiscriminant}. Then, $f\in U_1$ if and only if
$\Arg(f) \in \cA_{\Delta}$ and
\begin{equation}
\label{eqn:SpaceInequality}
(-1)^{\Vol(A)}\,\Delta\big(\delta_0|f_0|, \dots, \delta_{n+1}|f_{n+1}|\big) \leq 0.
\end{equation}
\end{theorem}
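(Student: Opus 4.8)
The plan is to analyze when the number of connected components of the complement of $\overline{\cA}_f$ drops from $\Vol(A)$ to $\Vol(A)-1$, translating everything through the order map $\ord_B(f)$ of Section~\ref{sec:coamoebas}. Since $B$ is a column vector and $\Z_B$ is a symmetric interval of length $2\pi\Vol(A)$, the set $\{\Arg_\pi(f)B + 2\pi\ZZ\}\cap\inter\Z_B$ has exactly $\Vol(A)$ points unless the congruence \eqref{eqn:translationinzonotope} holds, in which case it has $\Vol(A)-1$ points and one lattice point lands on each endpoint of $\Z_B$. So the first step is to observe that, because $\cA_f\subset\cL_f$, dropping a component of the complement of $\overline{\cA}_f$ requires first that the lopsided coamoeba already lose that component — i.e.\ that \eqref{eqn:translationinzonotope} hold — \emph{or} that the coamoeba $\overline{\cA}_f$ be strictly larger than $\overline{\cL}_f$ in a way that merges two components that remain separate for $\cL_f$. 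I expect that for circuits only the first mechanism occurs in $U_1$, and that \eqref{eqn:translationinzonotope} is exactly the condition $\Arg(f)\in\cA_\Delta$.

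The second step is therefore to identify \eqref{eqn:translationinzonotope} with the condition $\Arg(f)\in\cA_\Delta$. Here I would invoke Proposition~\ref{pro:discriminantmonomialarguments}: $\Arg(f)\in\cA_\Delta$ iff, after scaling $f$ by a constant, there is $\theta$ with $\hat f_k(\theta)=\delta_k$ for all $k$. Plugging such a $\theta$ into \eqref{eqn:explicitordermap}, the principal arguments of $\hat f(\theta)$ are $0$ when $\delta_k=1$ and $\pi$ when $\delta_k=-1$ (up to the branch choice handled by the normalization in the Remark), so $\Arg_\pi(\hat f(\theta))\cdot B = \pi\sum_{\delta_k=-1}\b_k = \pi\sum_{\delta_k=-1}\delta_kV_k = -\pi\sum_{\delta_k=-1}V_k$. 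Using $\sum_k \delta_k V_k = 0$ (the Gale relation $AB=0$ restricted to the top row) and $\sum_k V_k = (n+1)\Vol(A)$ or the appropriate parity identity, this reduces mod $2\pi$ to $\pm\pi\Vol(A)$ up to an even multiple, and since $\ord_B(f)(\theta)=\Arg_\pi(f)B$ differs from $\Arg_\pi(\hat f(\theta))B$ by $\theta$-translation terms that lie in $2\pi\ZZ$, one gets exactly \eqref{eqn:translationinzonotope}. Thus $\Arg(f)\in\cA_\Delta \Leftrightarrow \Arg(f)\in U_0\cup U_1$ is \emph{consistent with} losing a component — but this only shows $U_1\subset\{\Arg(f)\in\cA_\Delta\}$; it does not yet distinguish $U_0$ from $U_1$ within the discriminant locus.

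The third and crucial step is to pin down \emph{which} $f$ with $\Arg(f)\in\cA_\Delta$ actually lie in $U_1$ rather than $U_0$, and this is where inequality \eqref{eqn:SpaceInequality} enters. The point is that, given the argument data is fixed on $\cA_\Delta$, whether the complement truly loses a component depends on the \emph{moduli} $|f_k|$: the two extreme lattice points of $\ord_B(f)$ correspond to the two endpoints of $\Z_B$, and the corresponding (would-be) components of the complement are killed precisely when $f$, with its actual moduli but with arguments replaced by $\delta_k$, has a \emph{real singular point} — equivalently when the binomial discriminant $\Delta(\delta_0|f_0|,\dots,\delta_{n+1}|f_{n+1}|)$ has the sign making the real slice of $Z(f)$ intersect the relevant chamber. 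Using the explicit binomial formula \eqref{eqn:CircuitDiscriminant} and the normalization $\delta_{n+1}=-1$ (so $\a_{n+1}$ is the interior point, the unique point with $\delta_k=-1$ in one triangulation, forcing the exponent bookkeeping), I would compute $\Delta(\delta_0|f_0|,\dots,\delta_{n+1}|f_{n+1}|)$ and show that the component is absorbed iff this quantity has sign $(-1)^{\Vol(A)}$ times a nonpositive number — i.e.\ iff \eqref{eqn:SpaceInequality}. Concretely I would argue that on the segment of $\RR_+^A$ over which $\Arg(f)$ stays in $\cA_\Delta$, the real restriction of $f$ (with signs $\delta_k$) changes from having to not having a positive real critical value, and the transition is governed by the sign of that binomial; the sign $(-1)^{\Vol(A)}$ is the parity of $\prod\b_k^{\pm\b_k}$-type factors coming from the interior-point normalization.

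I expect the \textbf{main obstacle} to be this last step: cleanly relating ``the order-map lattice point at the endpoint of $\Z_B$ corresponds to an \emph{empty} chamber of the complement of $\overline{\cA}_f$'' to the sign condition \eqref{eqn:SpaceInequality}. The subtlety is that being on $\cA_\Delta$ only says the coamoeba \emph{touches} the relevant boundary hyperplane of the shell; whether it actually \emph{covers} the adjacent cell (thereby eliminating the component) requires a local analysis of $\overline{\cA}_f=\bigcup_{\Gamma\prec\N_A}\cA_{f_\Gamma}$ near that cell, using that each cell of the shell $\H_f$ contains at most one component (the result quoted from \cite{FJ15}). I would handle this by a dimension-one reduction: restrict to the pencil through $f$ in the direction that moves $\ord_B(f)$ along $\Z_B$, reducing the question to a univariate/binomial computation where the sign of $\Delta$ at the real point $(\delta_k|f_k|)$ directly records whether the real fiber is nonempty, and then invoke Proposition~\ref{pro:discriminantmonomialarguments} and Lemma~\ref{lem:uniquelastarguments} to transfer this back to the full coamoeba.
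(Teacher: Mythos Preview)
Your overall architecture is right: first show $\Arg(f)\notin\cA_\Delta\Rightarrow f\in U_0$ via the order map, then on $\cA_\Delta$ determine which moduli $|f_k|$ give $f\in U_1$. Step~2 is fine and matches the paper. The gap is in Step~3, and it is precisely the obstacle you flag but do not resolve.

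What is missing is a concrete reduction from ``the complement of $\overline{\cA}_f$ has $\Vol(A)$ components'' to a testable pointwise condition. The paper supplies this as Proposition~\ref{pro:testingtheta}: once $\Arg(f)\in\cA_\Delta$ and $\theta$ is the point with $\hat f_k(\theta)=\delta_k$ for all $k$, the complement has $\Vol(A)$ components \emph{if and only if} it contains $\theta$. The proof of that proposition is nontrivial: it perturbs to $f^\varepsilon=(f_0,\dots,f_n,f_{n+1}e^{i\varepsilon})$, uses that $f^\varepsilon$ is colopsided at $\theta$ for $\varepsilon\neq 0$, and then a convexity argument (the extra component $\Theta$ is convex and symmetric under $\theta\mapsto -\theta$ since $f$ is real, so $\theta\in\overline\Theta$ forces $\theta\in\Theta$). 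Your proposal has no substitute for this step. Your suggested ``pencil in the direction that moves $\ord_B(f)$ along $\Z_B$'' varies \emph{arguments}, not moduli, so it takes you off $\cA_\Delta$ rather than scanning the modulus condition \eqref{eqn:SpaceInequality}; and without the pointwise reduction you cannot connect ``real fiber nonempty'' to ``component disappears''.

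Once Proposition~\ref{pro:testingtheta} is in hand, the paper's argument is quite different from what you sketch: normalize so $\theta=0$ and $f$ is real with signs $\delta_k$; then $0\in\overline{\cA}_f$ iff $f$ has a root in $\RR_+^n$ (here one uses that $\a_{n+1}$ is interior, so the boundary of $\RR_{\ge 0}^n$ maps to $[1,\infty)$); fix $f_0,\dots,f_n,\arg f_{n+1}$ and vary $|f_{n+1}|$, observing that the transition value is exactly where $\Delta(\delta_0|f_0|,\dots,\delta_{n+1}|f_{n+1}|)=0$ (a single value, since $\Delta$ is binomial), and that $|f_{n+1}|\to\infty$ forces $0\in\overline{\cA}_f$, which fixes the direction of the inequality. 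Your phrase ``has a real singular point'' is the boundary case $\Delta=0$, not the open condition you need; the correct translation is ``has a root in $\RR_+^n$'', and the discriminant sign tracks which side of that transition you are on.
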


\begin{theorem}
\label{thm:configurationspacevertex}
Assume that $A$ is a vertex circuit.
Then, $f\in U_1$ if and only if $\Arg(f) \in \cA_{\Delta}$.
\end{theorem}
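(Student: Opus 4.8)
The plan is to characterize $U_1$ for a vertex circuit by tracking when the image of the order map $\ord_B(f)$ drops from cardinality $\Vol(A)$ to $\Vol(A)-1$, and then relating that combinatorial event to the $A$-discriminant. Recall from Section~\ref{sec:coamoebas} that $\Z_B$ is a symmetric interval of length $2\pi\Vol(A)$, so the lattice $\{\Arg_\pi(f)B + 2\pi\ZZ\}\cap\inter{\Z_B}$ has $\Vol(A)$ points \emph{unless} the translation $\Arg_\pi(f)B$ lands exactly at the endpoints, i.e.\ unless \eqref{eqn:translationinzonotope} holds, in which case it has $\Vol(A)-1$ points. Thus $\ord_B(f)$ tells us that the \emph{lopsided} coamoeba complement has maximal cardinality off of the hypersurface defined by \eqref{eqn:translationinzonotope}. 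So the first step is: establish that for a vertex circuit, the bijection between components of $\T^n\setminus\overline{\cL}_f$ and this lattice set actually descends to a bijection with components of $\T^n\setminus\overline{\cA}_f$ — in other words, $\cL_f$ and $\cA_f$ have the same number of complement components — precisely when $\Arg(f)\notin\cA_\Delta$, and that passing through $\cA_\Delta$ is where a component is lost.

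The second step is to make the discriminant connection precise. By Proposition~\ref{pro:discriminantmonomialarguments}, $\Arg(f)\in\cA_\Delta$ iff, after scaling, there is $\theta$ with $\hat f_k(\theta)=\delta_k$ for all $k$; at such a $\theta$ the vector $\hat f(\theta)$ lies on the real line with signs exactly $\delta_k$, so $\Arg_\pi(\hat f(\theta))\cdot B = \sum_k \pi\,[\delta_k=-1]\cdot\b_k$ (suitably interpreted), and using $\b_k=\delta_k V_k$ one checks this puts the order-map translate at the extreme endpoint of $\Z_B$. Conversely, since for a vertex circuit $A=\ver(\N_A)$, every proper face of $\N_A$ is a simplex spanned by a subset of $A$, so $\overline{\cA}_f\setminus\cA_f$ is governed by lower-dimensional simplex coamoebas which are easy to control; this rules out ``extra'' identifications coming from the boundary strata and lets one conclude that the only way to lose a component is the endpoint collision, which happens exactly on $\cA_\Delta$. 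Concretely I would argue that for a vertex circuit, $\Arg(f)\in\cA_\Delta$ is equivalent to \eqref{eqn:translationinzonotope} together with the requirement that the corresponding $\theta$ is genuinely in the closed coamoeba (not just the colopsided one) — and here the contrast with Theorem~\ref{thm:configurationspaceinteriorpoint} is that for a simplex circuit there is a second condition \eqref{eqn:SpaceInequality} deciding on which ``side'' of $\cA_\Delta$ the drop occurs, whereas for a vertex circuit no such sign condition is needed because the interior point is absent.

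The third step is to handle the ``if'' direction: given $\Arg(f)\in\cA_\Delta$, produce the lost component, i.e.\ show $f\in U_1$ and not merely $f\in U_0\cup U_1$. For this I would use Proposition~\ref{pro:discriminantmonomialarguments} to get $\theta$ with $\hat f_k(\theta)=\delta_k$, note this forces $f$ to be real at $\theta$, and then directly count: at such $f$ the specialization argument in the proof of Proposition~\ref{pro:discriminantmonomialarguments} (the binomial $f_\kappa^{V_\kappa}-\b_\kappa^{V_\kappa}$) shows a root of $Z(f)$ has collided with the argument mapping in a way that merges two complement components of $\overline{\cA}_f$. Equivalently, \eqref{eqn:translationinzonotope} holds so the lopsided complement already has only $\Vol(A)-1$ components, and since $\cA_f\subset\cL_f$ forces $\#\{\text{components of }\T^n\setminus\overline{\cA}_f\}\le\#\{\text{components of }\T^n\setminus\overline{\cL}_f\}$ — this is a standard consequence of $\overline{\cA}_f\subset\overline{\cL}_f$ and the component-count bound — we get $f\in U_1$.

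The main obstacle I expect is the ``only if'' direction combined with the subtlety that $\cA_f$ can be strictly smaller than $\cL_f$: a priori a component of $\T^n\setminus\overline{\cA}_f$ could be lost for reasons having nothing to do with \eqref{eqn:translationinzonotope}, namely two distinct lopsided-complement components could fail to be separated inside $\overline{\cA}_f$ even though $\Arg(f)\notin\cA_\Delta$. Ruling this out is exactly where I must use that $A$ is a \emph{vertex} circuit: the boundary $\overline{\cA}_f\setminus\cA_f=\bigcup_{\Gamma\prec\N_A}\cA_{f_\Gamma}$ is a union of coamoebas of truncations to simplices (since every proper face of a vertex circuit is a simplex), for which $\cA_{f_\Gamma}=\cL_{f_\Gamma}$, and hence $\overline{\cA}_f$ and $\overline{\cL}_f$ differ only in the ``top'' cell; a careful argument — presumably along the lines of the hyperplane-arrangement shell $\H_f$ and the result from \cite{FJ15} that each cell of $\H_f$ contains at most one complement component — should show that the only merging event is the endpoint collision governed by $\cA_\Delta$. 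I would expect to spend most of the proof making that last reduction airtight, possibly by invoking Theorem~\ref{thm:configurationspaceinteriorpoint}'s machinery restricted to the degenerate-simplex-free case.
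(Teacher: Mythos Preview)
Your proposal has the central inequality backwards, and this breaks the ``if'' direction. From $\overline{\cA}_f\subset\overline{\cL}_f$ one gets $\T^n\setminus\overline{\cL}_f\subset\T^n\setminus\overline{\cA}_f$, so each component of the lopsided complement sits inside a component of the coamoeba complement; together with \cite[pro.\ 3.9]{FJ15} (distinct lopsided-complement components land in distinct coamoeba-complement components) this yields
\[
\#\{\text{components of }\T^n\setminus\overline{\cA}_f\}\ \ge\ \#\{\text{components of }\T^n\setminus\overline{\cL}_f\},
\]
not $\le$. Hence when $\Arg(f)\in\cA_\Delta$ and the lopsided complement has only $\Vol(A)-1$ components, the coamoeba complement may still have $\Vol(A)$: the missing component can reappear as an ``extra'' component $\Theta$ of $\T^n\setminus\overline{\cA}_f$ in which $f$ is nowhere colopsided. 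Theorem~\ref{thm:configurationspaceinteriorpoint} shows this actually occurs for simplex circuits whenever \eqref{eqn:SpaceInequality} fails. So the real difficulty is the opposite of your stated obstacle: it is not that lopsided-complement components might merge in the coamoeba complement, but that an additional component might appear --- and ruling that out is exactly where the vertex-circuit hypothesis must be used.

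The paper's argument is quite different from your outline and does not go through the face structure. After normalizing so that $\hat f_k(0)=\delta_k$, the key vertex-circuit observation is that there exist \emph{adjacent} vertices $\a_0,\a_1$ with $\delta_0=1$ and $\delta_1=-1$, which forces $0\in\H_f$ (this is what fails for a simplex circuit, where the interior point has the opposite sign from all vertices). One then argues by contradiction: if a nowhere-colopsided component $\Theta$ existed, a perturbation as in Proposition~\ref{pro:testingtheta} gives $0\in\overline\Theta$; but since $0$ lies on a hyperplane $H\subset\H_f\subset\overline{\cA}_f$, the set $\Theta$ is confined to one open half-space $H_+$. Perturbing to $f^\varepsilon=(f_0e^{i\varepsilon},f_1,\dots,f_{n+1})$ shifts $H$ to $H^\varepsilon$ while (by continuity of roots) preserving a nowhere-colopsided component inside $H_+^\varepsilon$; choosing the sign of $\varepsilon$ to push $0$ into $H_-^\varepsilon$ then yields $\Vol(A)+1$ components for the complement of $\overline{\cA}_{f^\varepsilon}$, a contradiction. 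Your suggested route via simplicial faces of $\N_A$ does not reach this mechanism.
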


The article \cite{TdW13} describes the 
\emph{space of amoebas} in the case when $A$ is a simplex circuit in
dimension at least two.
In this case, the number of connected components of the amoeba complement
is either equal to the number of vertices of $\N_A$ or
one greater. One implication of \cite[thm.\ 4.4 and thm.\ 5.4]{TdW13}
is that, if the amoeba complement has the minimal number
of connected components, then 
\[
(-1)^{\Vol(A)}\,\Delta\big(\delta_0|f_0|, \dots, \delta_{n+1}|f_{n+1}|\big) \geq 0.
\]
Furthermore, this set intersect $U_1$ only in the discriminant
locus $\Delta(f) = 0$.
The space of amoebas in the case when $A$ is a simplex circuit in
dimension $n=1$ has been studied in \cite{TdW14}, and is a more
delicate problem. On the other hand, if $A$ is a vertex circuit, 
then each $f\in \CC_*^A$ is maximally sparse and hence has
a solid amoeba. That is, the components of the complement of the amoeba
is in a bijective correspondance with the vertices of the Newton
polytope $\N_A$. In particular, the number of connected components 
of the amoeba complement does not depend on $f$. From Theorems 
\ref{thm:configurationspaceinteriorpoint} and \ref{thm:configurationspacevertex}
we see that a similar discrepancy between simplex circuits and vertex circuits
occur for coamoebas.

\begin{example}
\label{ex:configurationspaces}
The reduced family
\[
 f(z) = 1 + z_1^3 + z_2^3 + \xi\,z_1 z_2.
\]
was considered in \cite[ex.\ 6, p.\ 59]{R03}, 
where the study of the space of amoebas was initiated. 
We have drawn the space of amoebas and coamoebas jointly in 
the left picture in Figure~\ref{fig:configurationspace}.
The blue region, whose boundary is a hypocycloid, 
marks values of $\xi$ for which the amoeba complement
has no bounded component. The set $U_1$ is seen in orange. The red dots
is the discriminant locus $\Delta(\xi) = 0$, which is contained in the circle $|\xi| = 3$
corresponding an equality in \eqref{eqn:SpaceInequality}.
\end{example}

\begin{example}
\label{ex:configurationspaces2}
The reduced family
\[
f(z) = 1 + z_1 + z_2^3 + \xi\,z_1^3 z_2
\]
is a vertex circuit. In this case, the topology of the amoeba does not depend on the coefficient $\xi$. 
The space of coamoebas is drawn in the right picture in Figure~\ref{fig:configurationspace}.
The set $U_1$ comprises the three orange lines emerging from the origin. 
The red dots is the discriminant locus $\Delta(\xi) = 0$.
It might seem like the set $U_0$ is disconnected, however this a consequence
of that we consider $f$ in reduced form. In $\CC_*^A$ the set $U_0$
is connected, though not simply connected.
\end{example}

\begin{figure}[h]
\centering
\includegraphics[width=30mm]{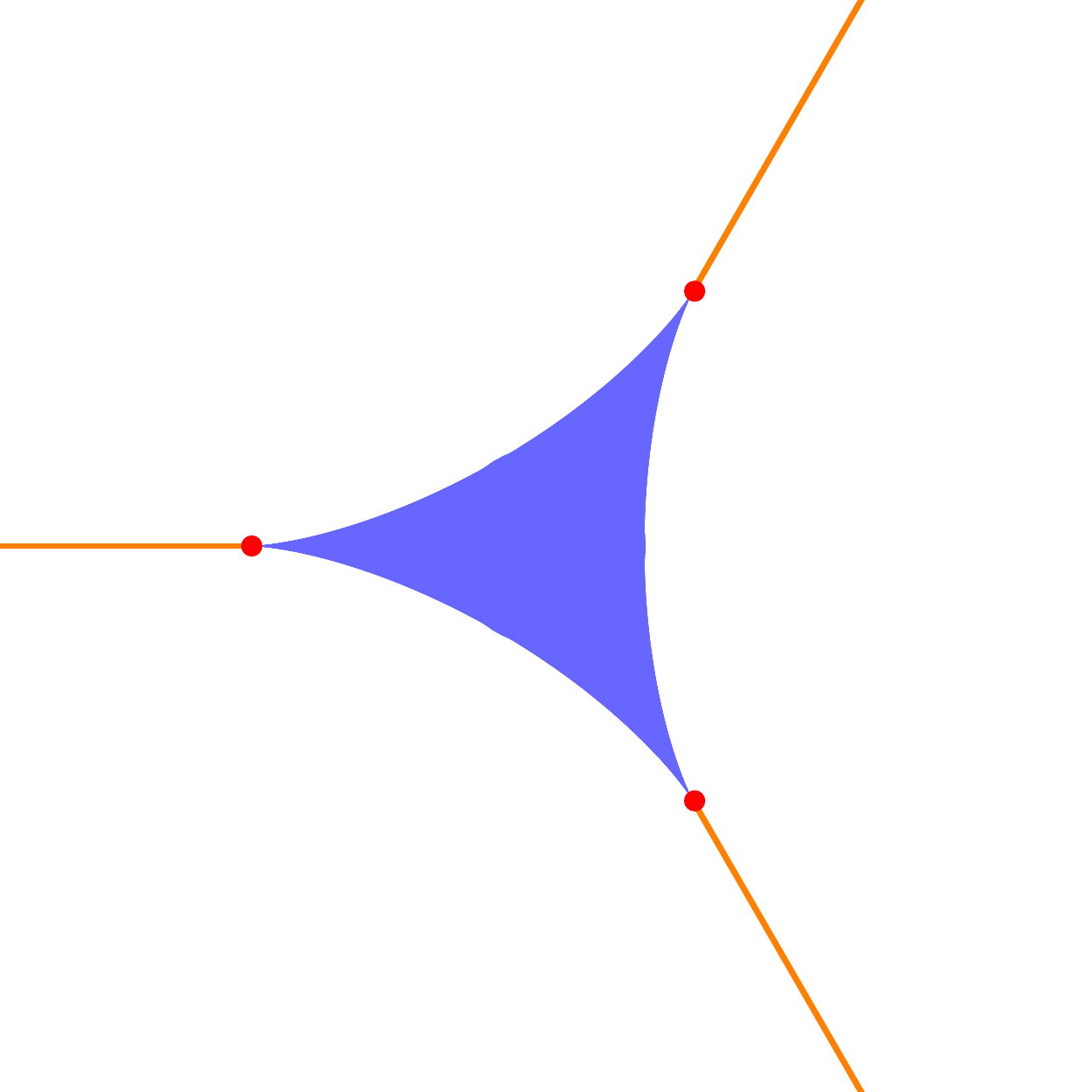}
$\quad\quad$
\includegraphics[width=30mm]{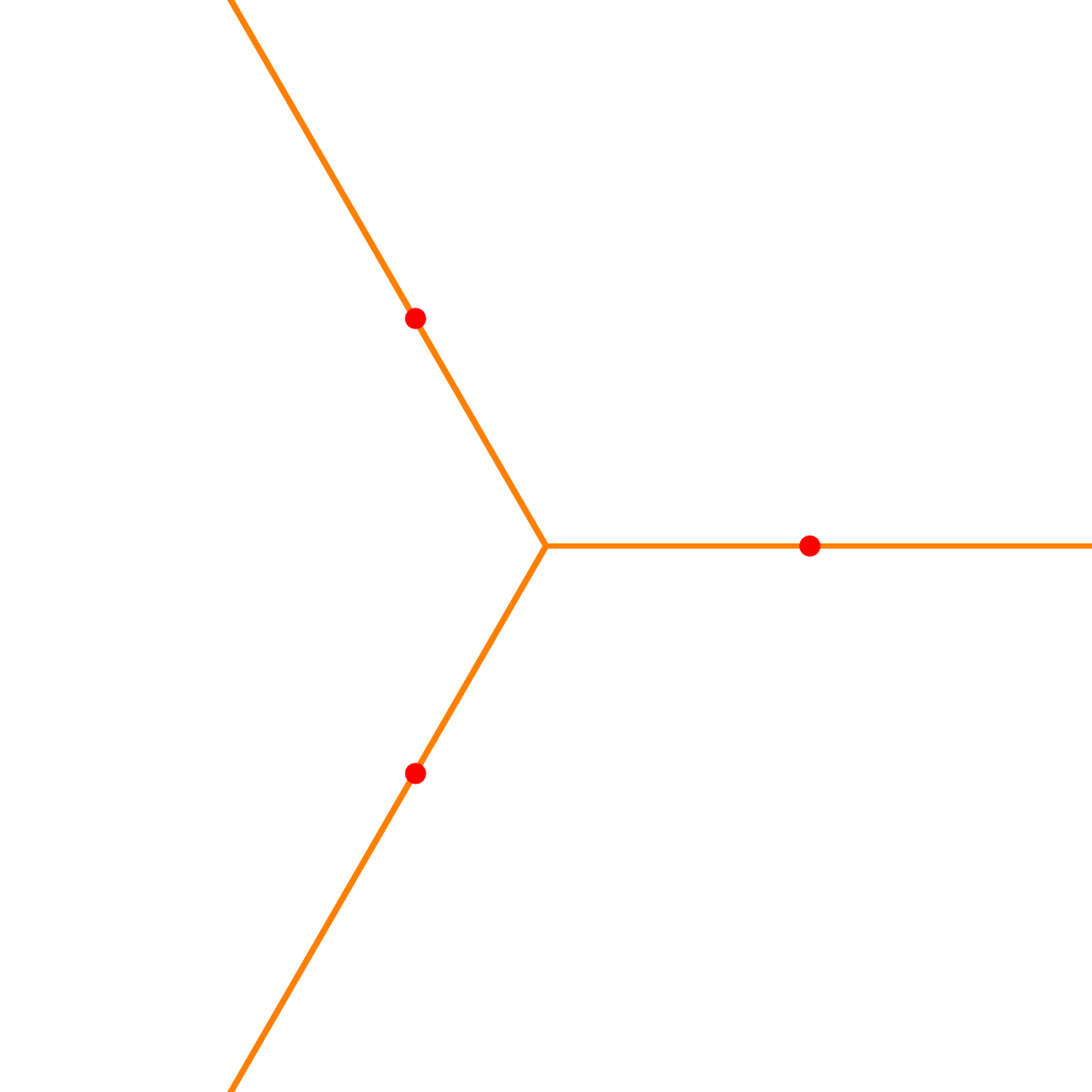}
\caption{The amoeba and coamoeba spaces of Examples~\ref{ex:configurationspaces}
and~\ref{ex:configurationspaces2}.}
\label{fig:configurationspace}
\end{figure}

\subsection{Proof of Theorem~\ref{thm:configurationspaceinteriorpoint}}
Impose the assumptions of Theorem \ref{thm:configurationspaceinteriorpoint}.
Then,
\[
 \b_0 +\dots + \b_n = -\b_{n+1} = \Vol(A),
\]
and in particular $V_{n+1} = \Vol(A)$.
By Lemma~\ref{lem:simplexequalarguments} there is a set $\I$
of cardinality $\Vol(A)$ consisting of all points $\theta$
such that $\hat f_{0}(\theta) = \dots = \hat f_{n}(\theta) = \delta_k = 1$
In particular, $f$ is colopsided at $\theta\in \I$
unless $\hat f_{n+1}(\theta) = -1$. 
It was shown in \cite[sec.\ 5]{FJ15} that, if $f\not\in\cA_{\Delta}$, 
then $\I$ is an index set for the complement of $\overline{\cA}_f$. 
In fact, $\I$ is an index set of the complement of $\overline{\cA}_f$
for arbitrary $f$.

\begin{proposition}
\label{pro:testingtheta}
Let $A$ be a simplex circuit. Assume that 
$\Arg(f) \in \cA_{\Delta}$, i.e., that there exists a $\theta\in \I$
with $\hat f_{{n+1}}(\theta) = \delta_{n+1}$.
Then, the complement of $\overline{\cA}_f$ has 
$\Vol(A)$-many connected components 
if and only if it contains $\theta$.
\end{proposition}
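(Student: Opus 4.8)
The plan is to analyze the structure of the complement of $\overline{\cA}_f$ using the index set $\I$ of cardinality $\Vol(A)$, exploiting that the only possible obstruction to $\I$ being a genuine index set (one point per component) is the failure condition \eqref{eqn:translationinzonotope}, which is precisely the condition $\Arg(f)\in\cA_\Delta$ by Proposition~\ref{pro:discriminantmonomialarguments}. First I would recall that every $\theta\in\I$ satisfies $\hat f_0(\theta)=\dots=\hat f_n(\theta)=1$, so $f$ is colopsided at such a $\theta$ unless $\hat f_{n+1}(\theta)=-1=\delta_{n+1}$; thus the hypothesis $\Arg(f)\in\cA_\Delta$ means exactly one $\theta_0\in\I$ fails this colopsidedness test, while the other $\Vol(A)-1$ points of $\I$ lie in $\T^n\setminus\overline{\cL}_f$, hence in distinct components of the complement of $\overline{\cL}_f\supset\overline{\cA}_f$. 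So the complement of $\overline{\cA}_f$ automatically has at least $\Vol(A)-1$ components, and the question is whether $\theta_0$ lies in a further, distinct component — equivalently, whether $\theta_0\notin\overline{\cA}_f$.

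Next I would make the link between "$\theta_0$ lies in the complement" and "the complement has $\Vol(A)$ components" precise. Since $U_k=\emptyset$ for $k\geq 2$, the complement has either $\Vol(A)$ or $\Vol(A)-1$ components. If $\theta_0\notin\overline{\cA}_f$, then $\theta_0$ lies in some component $C$; I must check $C$ is not one of the $\Vol(A)-1$ components already accounted for by $\I\setminus\{\theta_0\}$. For this I would use the order map $\ord_B(f)$ of \eqref{eqn:ordermap}: the components of the complement of $\overline{\cL}_f$ are indexed bijectively by $\{\Arg_\pi(f)B+2\pi\ZZ\}\cap\inter\Z_B$, and the $\Vol(A)-1$ colopsided points of $\I$ realize $\Vol(A)-1$ distinct such values; since $\theta_0$ is real at... more carefully, since at $\theta_0$ the term $\hat f_{n+1}(\theta_0)$ is antipodal to the common value $1$ of the others, $\theta_0$ lies on the boundary hyperplane arrangement of $\cL_f$ but can still lie in the open complement of the smaller set $\overline{\cA}_f$; the value $\ord_B(f)(\theta_0)$ then is the "missing" lattice point $\pm 2\pi\Vol(A)$ (the zonotope endpoint), and one checks it is a genuine new value modulo $2\pi\ZZ$ exactly when \eqref{eqn:translationinzonotope} holds, i.e. under our hypothesis. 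Conversely, if the complement has $\Vol(A)$ components, a counting argument using that $\I\setminus\{\theta_0\}$ already meets $\Vol(A)-1$ of them forces the last component to contain $\theta_0$, since any component disjoint from all of $\I$ would have to be accounted for, but the results of \cite{FJ15} quoted just before the proposition state $\I$ is an index set whenever $f\notin\cA_\Delta$ and more generally meets every component.

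The cleanest route for the forward direction ($\Vol(A)$ components $\Rightarrow$ $\theta_0$ in the complement): suppose the complement has $\Vol(A)$ components but $\theta_0\in\overline{\cA}_f$. Then the $\Vol(A)-1$ points of $\I\setminus\{\theta_0\}$ lie in the complement, in at most $\Vol(A)-1$ components; so there is a component $C'$ meeting none of $\I$. I would then derive a contradiction with the structure of the shell $\H_f$ and the fact (cited from \cite{FJ15}) that each full-dimensional cell of the shell contains at most one component of the complement of $\overline{\cA}_f$, together with the observation that the $\Vol(A)$ cells of $\H_f$ cut out by the hyperplane arrangement are in natural bijection with $\I$ — so a component avoiding $\I$ would have to share a cell with one of the points of $\I$, contradicting the at-most-one property once one checks the point of $\I$ in that cell does lie in the complement (which it does, being colopsided, if it is not $\theta_0$; and if it is $\theta_0$, then $\theta_0\notin\overline{\cA}_f$ after all). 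I expect the main obstacle to be exactly this bookkeeping: pinning down that the cells of the shell $\H_f$ biject with $\I$ and that "$\theta_0\in\overline{\cA}_f$" genuinely forces the cell containing $\theta_0$ to also swallow any stray component — this requires care because $\theta_0$ sits on the shell's hyperplane arrangement rather than in a cell interior, so I would likely argue via a small perturbation of the coefficient $f_{n+1}$ moving $\theta_0$ off the arrangement and tracking how components merge, or invoke the order-map description directly to avoid the geometry altogether. The reverse direction is comparatively routine given Proposition~\ref{pro:discriminantmonomialarguments} and the order map.
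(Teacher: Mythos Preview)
Your overall strategy---locate the $\Vol(A)-1$ colopsided points of $\I$ in distinct components and ask whether $\theta_0$ accounts for one more---matches the paper's, but both directions have genuine gaps.

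For the direction ``$\theta_0\notin\overline{\cA}_f$ implies $\Vol(A)$ components'' (which you call routine), the issue is that the order map $\ord_B(f)$ of \eqref{eqn:ordermap} is defined on the complement of $\overline{\cL}_f$, not of $\overline{\cA}_f$, and $\theta_0$ lies precisely on the discontinuity locus of $\Arg_\pi(\hat f(\cdot))B$ (two components of $\hat f(\theta_0)$ are antipodal). So ``$\ord_B(f)(\theta_0)$ is the missing endpoint of $\Z_B$'' is not a well-defined statement, and more importantly it does not rule out that the component $C\ni\theta_0$ also contains a point $\hat\theta$ where $f$ \emph{is} colopsided---in which case $C$ would coincide with one of the $\Vol(A)-1$ components already counted, via \cite[pro.~3.9]{FJ15}. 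The paper closes this gap by a perturbation: rotate $f_{n+1}$ by $e^{i\varepsilon}$, so that $f^\varepsilon$ becomes colopsided at $\theta_0$; then $\theta_0$ and $\hat\theta$ lie in the same component of the complement of $\overline{\cL}_{f^\varepsilon}$ for small $\varepsilon$, but one computes $\ord_B(f^\varepsilon)(\theta_0)=\Vol(A)(\pi-\varepsilon)\neq m(\pi-\varepsilon)=\ord_B(f^\varepsilon)(\hat\theta)$, a contradiction. This perturbation is the missing idea.

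For the other direction, your cell-counting sketch rests on two inaccurate claims. First, you assert that $\theta_0$ ``sits on the shell's hyperplane arrangement''; in fact $\theta_0\notin\H_f$, because the edges of $\N_A$ involve only the vertices $\a_0,\dots,\a_n$ (the interior point $\a_{n+1}$ lies on no edge), and at $\theta_0$ the corresponding monomials all equal $1$, never antipodal. The paper uses $\theta_0\notin\H_f$ essentially. Second, the asserted bijection between cells of $\H_f$ and $\I$ is not established anywhere and is not obvious. The paper instead argues: the ``extra'' component $\Theta$ (where $f$ is nowhere colopsided) must have $\theta_0\in\overline\Theta$ by the same perturbation trick; then since $\theta_0\notin\H_f$, a small disc $D_0$ about $\theta_0$ meets the complement of $\overline{\cA}_f$ only in $\Theta$; finally, realness of $f$ gives $-\theta\in\Theta$ whenever $\theta\in\Theta\cap D_0$, and convexity of $\Theta$ forces $\theta_0\in\Theta$.
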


\begin{proof}
We can assume that $\theta = 0$.
To prove the \emph{if}-part, assume that $0 \in \Theta$ for some connected component $\Theta$
of the complement of $\overline{\cA}_f$. We wish to show that $f$ is never colopsided
in $\Theta$, for this implies that the complement of $\overline{\cA}_f$ 
has $\Vol(A)$-many connected components.
Assume to the contrary that there exist a 
point $\hat \theta\in \Theta$ such that $f$ is colopsided
at $\hat\theta$. Then, $\ord_B(f)(\hat\theta) = m\pi$ for some
integer $m$, with $|m|<\Vol(A)$, see \eqref{eqn:ordermap}. 
Let $f^\varepsilon = (f_0, \dots, f_n, f_{n+1}e^{i\varepsilon})$.
Then $f^\varepsilon$ is colopsided at $0$ for $\varepsilon \notin 2\pi\ZZ$.
By continuity of roots, for $\varepsilon > 0$ sufficiently small, 
the points $0$ and $\hat\theta$ are contained in the same connected
component of the complement of $\overline{\cA}(f^\varepsilon)$.
Hence, by \cite[pro.\ 3.9]{FJ15}, they are 
contained in the same connected component of 
the complement of $\overline{\cL}(f^\varepsilon)$. However,
\[
\ord_B(f^\varepsilon)(0) = \Vol(A)(\pi - \varepsilon) \neq m(\pi - \varepsilon)= \ord_B(f^\varepsilon)(\hat \theta),
\]
contradicting that $\ord_B(f^\varepsilon)$ is constant on connected
components of the complement of $\cL(f^\varepsilon)$.

To prove the \emph{only if}-part, assume that there exists a 
connected component $\Theta$ of
the complement of $\overline{\cA}_f$ in which $f$ is never colopsided.
We wish to prove that $0\in \Theta$.
As $f^\varepsilon$ is colopsided at $0$ for $\varepsilon > 0$ sufficiently
small, we find that $0 \in \overline\Theta$. Indeed, if this was
not the case, then the complement of $\overline{\cA}(f^\varepsilon)$ 
has $(\Vol(A)+1)$-many connected components,
a contradiction.
As $0 \not\in \H_f$, and by \cite[lem.\ 2.3]{FJ15},
there exists a disc $D_0$ around $0$ such that 
\[
 D_0\cap(\T^n\setminus\overline{\cA}_f) = D_0\cap\Theta.
\]
Furthermore, $D_0\cap \Theta \neq \emptyset$, since $0 \in \overline{\Theta}$.
Let $\theta\in D_0\cap \Theta$. 
As $f$ is a real polynomial, conjugation yields that 
$-\theta \in D_0\cap\Theta$.
However, $\Theta \subset \RR^n$ is convex, implying that
$0 \in \Theta$.
\end{proof}

\begin{proof}[Proof of Theorem~\ref{thm:configurationspaceinteriorpoint}]
If $\Arg(f)\not\in\cA_{\Delta}$ then the image of $\ord_B(f)$ is of cardinality
$\Vol(A)$, and hence $f\in U_0$.
Thus, we only need to consider $\Arg(f)\in\cA_{\Delta}$, 
where we can assume that $\hat f(0) = \delta_k$ for all $k$.
In particular, $f$ is a real polynomial.
By Proposition~\ref{pro:testingtheta}, it holds that the complement of
$\overline{\cA}_f$ has $\Vol(A)$-many connected
components if and only if it contains $0$.
Keeping $f_0, \dots, f_n$ and $\arg(f_{n+1})$ fixed, let us consider
$f$ as a function of $|f_{n+1}|$.
As $f$ is a real polynomial, 
it restricts to a map $f\colon \RR_{\geq 0}^n \rightarrow \RR$,
whose image depends nontrivially on $|f_{n+1}|$.
Notice that $0\in \overline{\cA}_f$ if and only if
$f(\RR_{\geq0}^n)$ contains the origin.
Since $\hat f_k(0) = \delta_k = 1$ for $k\neq n+1$, and
since $\a_{n+1}$ is an interior point of $A$,
the map $f$ takes the boundary of $\RR_{\geq 0}^n$ to $[1, \infty)$.
In particular, if $0\in f(\RR_{\geq0}^n)$, 
then $0\in f(\RR_{+}^n)$.
The boundary of the set of all $|f_{n+1}|$ 
for which $0\in f(\RR_{\geq0}^n)$ is the set of all values of $|f_{n+1}|$ for 
which $f(\RR_{+}^n) = [0,\infty)$. 
Furthermore, $f(\RR_+^n) = [0,\infty)$ holds
if and only if there 
exists an $r\in \RR_+^n$ such that $f(r) = 0$, while $f(r) \geq 0$ in a 
neighborhood of $r$, implying that $r$ is a critical point of $f$.
That is, 
\[
\Delta\big(\delta_1|f_1|, \dots, \delta_{n+1}|f_{n+1}|\big) = 0.
\]
Since $\Delta$ is a binomial, there is exactly one such value of
$|f_{n+1}|$.
Finally, we note that $0\in\overline{\cA}_f$ if $|f_{n+1}|\rightarrow \infty$, 
which finishes the proof.
\end{proof}

\subsection{Proof of Theorem~\ref{thm:configurationspacevertex}}
If $\Arg(f)\not\in \cA_{\Delta}$, then the image of $\ord_B(f)$ is of 
cardinality $\Vol(A)$
and hence $f\in U_0$.
Assume that $\Arg(f)\in \cA_{\Delta}$, and that 
$\hat f_k(0) =\delta_k$ for all $k$.
It holds that $0\in \H_f$ 
since there exists two adjacent vertices $\a_0$ and $\a_1$ of $A$ such that 
$\delta_{0} = 1$ and $\delta_{1} = -1$. Let,
\begin{align*}
H & = \{\theta\,|\, \<\a_{0}-\a_{1},\theta\> =  0\}\\
\intertext{be the hyperplane of $\H_f$ containing $0$.
Assume that exists connected component $\Theta$ of the complement of
$\overline{\cA}_f$ in which $f$ is nowhere colopsided.
As in the proof of Proposition~\ref{pro:testingtheta}, we conclude that
$0\in \overline{\Theta}$, for otherwise we could construct a coamoeba
with $(\Vol(A)+1)$-many connected components of its complement.
As $H\subset \overline{\cA}_f$, we find that $\Theta$
is contained in one of the half-spaces}
H_\pm & = \{\theta\,|\, \pm\<\a_0-\a_1,\theta\> > 0\},\\
\intertext{say that $\Theta \subset H_+$.
Let $f^\varepsilon = (f_0e^{i\varepsilon}, f_1, \dots, f_{n+1})$,
and let $H^\varepsilon$ denote the corresponding hyperplane}
H^\varepsilon &= \{\theta\,|\, \<\a_0-\a_{1},\theta\> =  -\varepsilon\}.
\end{align*}
For $|\varepsilon|$ sufficiently small, continuity of roots 
implies that there is a connected component
$\Theta^\varepsilon\subset H_+^\varepsilon$ in which $f^\varepsilon$
is never colopsided.
However,
by choosing the sign of $\varepsilon$, we can force $0\in H_-^\varepsilon$.
This implies that the coamoeba $\overline{\cA}_{f^\varepsilon}$ has
$(\Vol(A)+1)$-many connected components of its complement, a contradiction.

\section{The maximal area of planar circuit coamoebas}
\label{sec:maximalarea}
In this section, we will prove the following bound.

\begin{theorem}
\label{thm:areabound}
 Let $A$ be a planar circuit, and let $f\in \CC_*^A$. Then $\Area(\cA_f) \leq 2\pi^2$.
\end{theorem}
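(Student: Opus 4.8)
The plan is to reduce to a normal form and then carefully count the ``sheets'' of the argument map. After an integer affine transformation we may assume $A$ contains the origin and, by the structure theory of circuits, that $A$ consists of the three vertices of a triangle together with one extra point (either interior, giving a simplex circuit, or on an edge, giving a vertex circuit). In either case $Z(f)$ is a curve in $\CC_*^2$, and $\Arg\colon Z(f)\to\T^2$ is generically finite onto its image; the degree of this map is at most $\Vol(A)$ by a Bernstein-type count, since a generic fiber $\Arg^{-1}(\theta)$ meets $Z(f)$ in at most $\Vol(A)$ points (the monomial substitution $z_j\mapsto r_je^{i\theta_j}$ turns $f=0$ into two real equations whose common zero set in $\RR_+^2$ has the expected cardinality bound). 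The area of $\cA_f$, counted without multiplicity, is therefore at most $\tfrac{1}{\Vol(A)}$ times the area computed with multiplicity, which is $\int_{\T^2}\#\Arg^{-1}(\theta)\,d\theta$. So it suffices to bound this weighted area by $2\pi^2\Vol(A)$.

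To get the weighted bound I would pass to the universal cover $\RR^2$ and use the order map / lopsidedness machinery of Section~\ref{sec:coamoebas}. Recall that $f$ is colopsided at $\theta$ whenever the phases $\hat f_k(\theta)$ all lie in an open half-plane, i.e.\ outside a union of hyperplane strips; on the colopsided locus $\theta\notin\cL_f\supset\cA_f$, so only the complement of $\cL_f$ contributes. The key computation is to show that the weighted area of $\overline{\cL}_f$ is exactly the volume of the zonotope $\Z_B$ — which for a circuit is an interval of length $2\pi\Vol(A)$ — via the order map $\ord_B(f)\colon\T^n\setminus\overline{\cL}_f\to\inter\Z_B$: each value in $\inter\Z_B\cap(\Arg_\pi(f)B+2\pi\ZZ)$ is hit by exactly one component of the complement. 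Dually, the coamoeba $\cL_f$ itself should sweep out ``half'' the torus because the argument phases $e^{i\varphi}f_k(e^{i\theta})$ being non-colopsided is, up to the antipodal-angle hyperplane arrangement, a half-space condition; pairing $\theta$ with the conjugate point $-\theta$ (legitimate since after reduction $f$ is real) shows $\cA_f$ and a translate of $-\cA_f$ together cover $\T^2$, which forces $\Area(\cA_f)\le\tfrac12\Area(\T^2)=2\pi^2$. The honest version of this argument is to integrate the indicator of $\cL_f$ against the weighted area and show the total is at most $2\pi^2\Vol(A)$, using that each of the $\Vol(A)$ lattice translates of $\Z_B/2$ contributes a half-torus worth of non-colopsided directions.

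The main obstacle I anticipate is making the ``without multiplicity'' bookkeeping rigorous: the map $\Arg$ on $Z(f)$ can fail to be finite exactly over $\cA_{\Delta}$ and over the shell $\H_f$, and the multiplicity can jump, so equating $\Area(\cA_f)$ with a clean fraction of the weighted area requires controlling the branch locus. I would handle this by a density/continuity argument — prove the bound for $f$ in the open dense set $\CC_*^A\setminus\cA_\Delta$ where the degree of $\Arg|_{Z(f)}$ is constant and the fibers are understood, then let $f$ degenerate and use semicontinuity of area of the closed coamoeba. A secondary subtlety is the vertex-circuit versus simplex-circuit dichotomy (cf.\ Theorems~\ref{thm:configurationspaceinteriorpoint} and~\ref{thm:configurationspacevertex}): in the vertex case $\N_A$ is a genuine triangle and the edge truncations $f_\Gamma$ are trinomials whose coamoebas are the standard line arrangements of Figure~\ref{fig:simplexcoamoeba}, giving the shell directly; in the simplex case one of the ``edges'' is really a segment through the interior point and needs separate treatment. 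I expect the equimodular-triangulation characterization of the extremal case to drop out of the equality analysis: equality forces every fiber of $\Arg|_{Z(f)}$ to have full cardinality $\Vol(A)$ over half the torus, which pins down the phases $\hat f_k$ to be constant on each maximal simplex of a triangulation of equal-volume pieces.
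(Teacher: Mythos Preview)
Your proposal has a genuine gap at the very first step, and the rest of the sketch does not recover from it. You write that ``the area of $\cA_f$, counted without multiplicity, is at most $\tfrac{1}{\Vol(A)}$ times the area computed with multiplicity,'' using that generic fibers of $\Arg|_{Z(f)}$ have at most $\Vol(A)$ points. But an \emph{upper} bound on fiber cardinality yields the inequality in the wrong direction: if $\mathrm{mult}(\theta)\le \Vol(A)$ on $\cA_f$, then $\int_{\T^2}\mathrm{mult}(\theta)\,d\theta \le \Vol(A)\cdot\Area(\cA_f)$, which gives only a \emph{lower} bound on $\Area(\cA_f)$. To bound the unweighted area from above via the weighted area you would need a uniform positive lower bound on the multiplicity over $\cA_f$, which you do not have. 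The subsequent attempts --- the zonotope computation (which equates a one-dimensional length $|\Z_B|=2\pi\Vol(A)$ with a two-dimensional area), and the $\theta\mapsto -\theta$ pairing (which assumes $f$ is real, contrary to the hypothesis $f\in\CC_*^A$, and in any case yields $\cA_f=-\cA_f$ rather than a covering) --- do not supply the missing inequality. Incidentally, your description of the simplex/vertex dichotomy is also off: a vertex circuit has $\N_A$ a quadrilateral with $A=\ver(\N_A)$, not a triangle with a point on an edge.

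The paper's argument is entirely different and avoids multiplicity altogether. It bounds $\Area(\cA_f)$ by $\Area(\cL_f)$ via the inclusion $\cA_f\subset\cL_f$, and then computes $\Area(\cL_f)$ exactly. The key identity (from \cite{FJ15}) is
\[
\cL_f=\bigcup_{k=1}^{4}\cA_{f_{\hat k}},
\]
the union of the four trinomial coamoebas obtained by deleting one point of $A$. Each trinomial coamoeba has area exactly $\pi^2$ (four sign-translates of a trinomial coamoeba tile $\T^2$), and a short combinatorial lemma shows that a generic $\theta\in\cL_f$ lies in exactly two of the four $\cA_{f_{\hat k}}$. Hence $\Area(\cL_f)=\tfrac12\cdot 4\pi^2=2\pi^2$. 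No degree count, no reality assumption, and no limiting argument are needed.
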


Furthermore, we provide the following classification of
for which circuits the bound of Theorem~\ref{thm:areabound}
is sharp.

\begin{theorem}
\label{thm:unimodularmaximalthm}
Let $A$ be a planar circuit. Then there exist
a polynomial $f\in \CC_*^A$ such that $\Area(\cA_f) = 2\pi^2$ 
if and only if $A$ admits an equimodular triangulation.
\end{theorem}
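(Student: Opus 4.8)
The plan is to prove the two implications separately, using Theorem~\ref{thm:areabound} as the quantitative backbone and the structure theory of circuit coamoebas developed in Sections \ref{sec:coamoebas}--\ref{sec:configurationspace}.

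\textbf{The ``if'' direction.} Suppose $A$ admits an equimodular triangulation $T$, so that $\N_A$ is subdivided into maximal simplices $\N_{A_k}$ all of equal volume $v = \Vol(A)/(\#T)$. The idea is to build, by a suitable choice of coefficients, a polynomial $f$ whose coamoeba is ``as large as the proof of Theorem~\ref{thm:areabound} allows,'' i.e.\ for which every inequality used in that bound is an equality. Concretely, I would first treat a single unimodular simplex circuit (or more precisely a simplex of volume $v$ whose vertices are part of $A$): for the binomial/trinomial edge truncations $f_\Gamma$, the hyperplane arrangement $\H_f$ and the colopsided coamoeba $\cL_f$ are explicitly computable, and one chooses the arguments $\arg(f_k)$ so that the shell $\H_f$ cuts $\T^2$ into cells of total ``oriented'' area exactly $2\pi^2$, mimicking the trinomial picture of Figure~\ref{fig:simplexcoamoeba} rescaled by the linear map induced by $A$ (which multiplies areas by $\Vol$). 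Then I would glue: because the triangulation is coherent for a generic height function, choosing the moduli $|f_k|$ according to that height function degenerates $\cA_f$ toward the union $\bigcup_k \cA_{f_{A_k}}$ of the coamoebas of the maximal-simplex truncations (this is the standard ``coamoeba of a coherently-triangulated polynomial contains the union of simplex coamoebas'' phenomenon); equimodularity guarantees each piece contributes the same area and that they tile without overlap up to measure zero, so the areas add to exactly $2\pi^2$. By lower semicontinuity of $\Area(\overline{\cA})$ under the degeneration (or by working directly at the degenerate point and then perturbing), one gets an honest $f\in\CC_*^A$ with $\Area(\cA_f)=2\pi^2$.

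\textbf{The ``only if'' direction.} Conversely, suppose $\Area(\cA_f) = 2\pi^2$ for some $f$. I would revisit the proof of Theorem~\ref{thm:areabound} and extract its equality case. That proof presumably estimates $\Area(\cA_f)$ by decomposing $\T^2$ according to the shell $\H_f$ (a hyperplane arrangement determined by the edges of $\N_A$) and bounding the area cell-by-cell, with the global constant $2\pi^2 = \tfrac12\Vol(\T^2)\cdot\pi$-type bound coming from the length $2\pi\Vol(A)$ of the zonotope $\Z_B$ and the order map $\ord_B(f)$. Equality forces: (i) every cell of the relevant arrangement that \emph{could} lie in $\cA_f$ actually does, and (ii) the edge directions $\a_i - \a_j$ and the induced cell volumes must be ``balanced'' — which is exactly the combinatorial condition that the lattice distances between parallel edges, equivalently the volumes $V_k$, admit a common partition, i.e.\ that $A$ carries an equimodular triangulation. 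The cleanest route is likely: equality in the area bound $\Rightarrow$ the colopsided coamoeba $\cL_f$ equals $\cA_f$ and both have area exactly half the torus $\Rightarrow$ by Proposition~\ref{pro:discriminantmonomialarguments} and the order-map description, $f$ must be (up to the torus action) a ``maximally degenerate'' point where the discriminantal/critical-point data forces $\N_A$ to decompose into equal-volume simplices.

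\textbf{Expected main obstacle.} The hard part is the ``if'' direction's gluing step: making precise that one can choose arguments \emph{and} moduli simultaneously so that the individual simplex coamoebas of area $2\pi^2$ (in the rescaled torus) assemble into a single coamoeba of area $2\pi^2$ without the overlaps eating into the total, and controlling the limit/perturbation so the maximum is attained in $\CC_*^A$ rather than only approached. This requires a careful analysis of how $\cA_f$ and its shell $\H_f$ behave under the coherent degeneration — in particular that the ``new'' hyperplanes of $\H_f$ coming from interior edges of $T$ have measure zero and do not create extra uncovered regions — and an argument (continuity of roots plus the lower semicontinuity already invoked in Section~\ref{sec:configurationspace}) that upgrades the limiting equality to an attained one. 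The ``only if'' direction is comparatively safer, being essentially bookkeeping on the equality case of a known inequality, provided the proof of Theorem~\ref{thm:areabound} is set up so that its slack is transparently indexed by the failure of equimodularity.
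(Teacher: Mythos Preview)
Your proposal has genuine gaps in both directions, and in each case the paper's argument is rather different from what you sketch.

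\textbf{The ``if'' direction.} Your gluing/degeneration programme is not needed and, as written, does not work: a bivariate trinomial coamoeba has area exactly $\pi^2$, and you never establish that the simplex pieces tile without overlap (equimodularity alone does not give this for an arbitrary choice of coefficients, and ``lower semicontinuity of area under degeneration'' is not a tool available here). The paper's route is far more direct: one first \emph{classifies} planar circuits admitting an equimodular triangulation. After an integer affine transformation with $\a_3=\0$, $\a_1=k_1\e_1$, $\a_2=k_2\e_2$, elementary area identities force either $\a_4=(k_1,k_2)$ (vertex circuit, the unit square up to scaling) or $3m_1=k_1$, $3m_2=k_2$ (simplex circuit of Example~\ref{ex:maximalareainterior}). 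Thus there are exactly two cases, and for each an explicit $f$ with $\Area(\cA_f)=2\pi^2$ is already exhibited in Examples~\ref{ex:maximalareasquare} and~\ref{ex:maximalareainterior}. No gluing or limiting argument is required.

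\textbf{The ``only if'' direction.} You are right that one must analyse the equality case of Theorem~\ref{thm:areabound}, but the mechanism you propose (order map, zonotope length, Proposition~\ref{pro:discriminantmonomialarguments}) does not by itself produce a constraint on the volumes $V_k$; it controls the number of complementary components, not areas of simplices. The paper's argument is concrete and geometric. Choose an interior diagonal $S=\{\a_1,\a_2\}$ and normalise so that $S\subset\RR\e_1$; then $\cA_{f_S}$ is a family of vertical lines on which $\hat f_1$ and $\hat f_2$ are antipodal, hence $\cA_{f_S}\subset\overline{\cL}_f$. If $f$ were colopsided at some $\theta\in\cA_{f_S}\setminus\H_f$, a neighbourhood of $\theta$ would lie in $\overline{\cL}_f\setminus\overline{\cA}_f$ with positive area, contradicting $\Area(\cA_f)=\Area(\cL_f)=2\pi^2$. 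Hence $f$ is nowhere colopsided along $\cA_{f_S}$, which forces: whenever $\hat f_3(\theta)\in\ell_\theta$ one also has $\hat f_4(\theta)\in\ell_\theta$. Counting winding numbers along a vertical line gives $m_{32}=m_{42}$, i.e.\ the two triangles on either side of the diagonal have equal area. Repeating with a second interior segment (in the simplex case) yields the equimodular triangulation. Your sketch does not supply this link between ``no colopsidedness on the diagonal arrangement'' and equality of the $V_k$, which is the heart of the argument.
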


\begin{example}
\label{ex:maximalareasquare}
Let $f(z) = 1 + z_1 + z_2 - rz_1 z_2$ for $r\in \RR_+$. Notice that $A$ admits a
unimodular triangulation. The shell $\H_f$ consist of 
the families $\theta_1 = k_1 \pi$ and $\theta_2 = k_2\pi$ for $k_1,k_2\in \ZZ$. 
Hence, the shell $\H_f$ divides $\T^2$ into four regions of equal area.
Exactly two of these regions are contained in the 
coamoeba, which implies that $\Area(\cA_f) = 2 \pi^2$. See the left picture
of Figure~\ref{fig:maximalareaexamples}.
\end{example}

\begin{example}
\label{ex:maximalareainterior}
Let $f(z) = 1+z w^2 + z^2 w - r z w$ for $r\in \RR_+$. Also in this case $A$ admits
a unimodular triangulation. Notice that $\Arg(f)\in \cA_{\Delta}$. 
The coamoeba of the trinomial $g(z) = 1+z w^2 + z^2 w$ has three
components of its complement, of which $f$ is colopsided in two. 
We have that $\H_f= \H_g$. Thus, if the 
complement of $\overline{\cA}_f$ has two connected components, 
i.e.\ if $r \geq 3$,
then one of the three components of the  
complement of $\overline{\cA}_g$
is contained in $\overline{\cA}_f$,
which in turn implies that $\Area(\cA_f) = 2\pi^2$.
See the right picture of Figure~\ref{fig:maximalareaexamples}.
\end{example}

\begin{figure}[h]
\centering
\includegraphics[height=30mm]{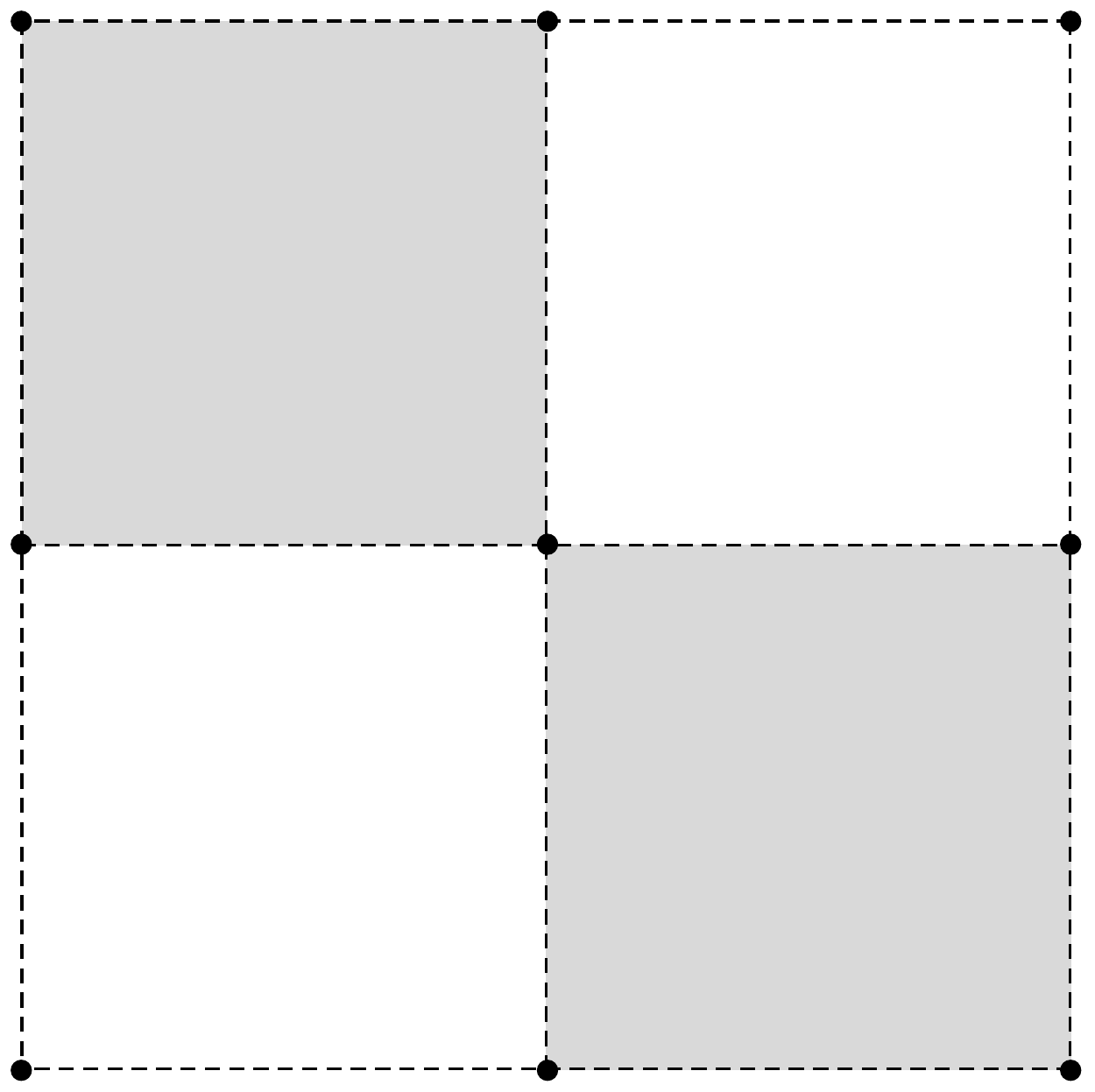}
\hspace{16mm}
\includegraphics[height=30mm]{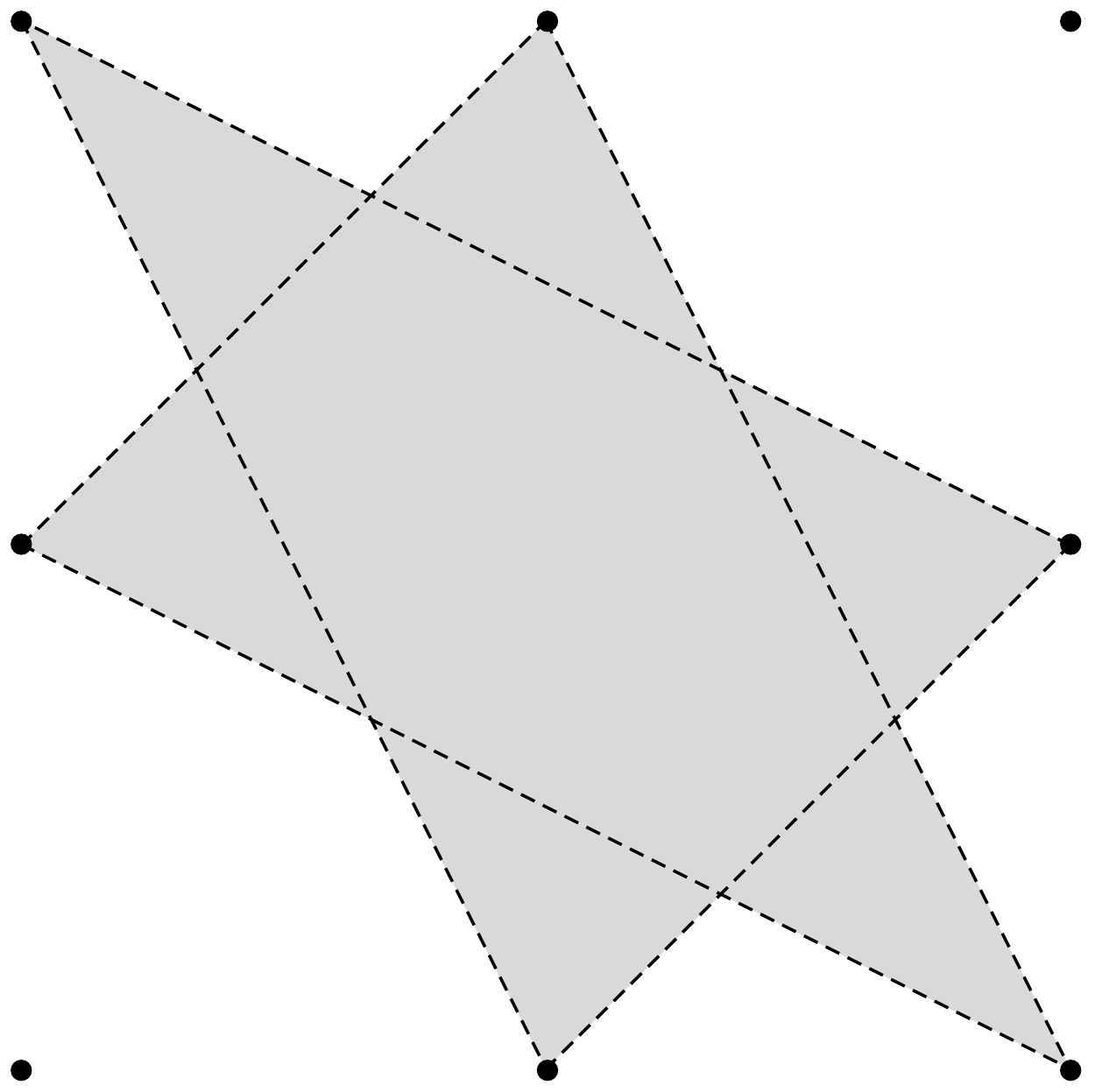}
\caption{The coamoebas of Examples \ref{ex:maximalareasquare} and \ref{ex:maximalareainterior}.}
\label{fig:maximalareaexamples}
\end{figure}

Let us compare our results to the corresponding results of planar circuit amoebas. 
It was shown in \cite[thm.\ 12, p.\ 30]{R03} that the sharp upper bound on the number
of connected components of the amoeba complement is $\# A$. 
In \cite{MR01}, a bound on the area of the amoeba was
given as $\pi^2 \Vol(A)$, and it was shown that maximal area was obtained
for Harnack curves. 
For coamoebas, to roles of the integers $\Vol(A)$ and $\# A$ are reversed.
The upper bound on the number of 
connected components of the coamoeba complement is given by $\Vol(A)$.
While, at least for codimension $m\leq 1$,
the maximal area of the coameoba is $\pi^2(m+1) = \pi^2(\# A - n)$. 
Note also that the coamoebas of Examples~\ref{ex:maximalareasquare} and \ref{ex:maximalareainterior} 
are both coamoebas of Harnack curves.

Consider a bivariate trinomial $f$, with one marked monomial. 
Let $\Sigma = \Sigma(f)$ denote the quadruple of polynomials obtained
by flipping signs of the unmarked monomials. Furthermore, let 
\[
 \H_{\Sigma} = \bigcup_{g\in \Sigma} \H_g,
\]
which is a hyperplane arrangement in $\RR^2$ (or $\T^2$).
Let $\P_{\Sigma}$ denote the set of all intersection points of 
distinct hyperplanes in $\H_{\Sigma}$.
\begin{proposition}
\label{pro:unioncovers}
 Let $f(z)$ be a bivariate trinomial. 
 Then, the union
 \[
  \overline{\cA}_{\Sigma} = \bigcup_{g\in\Sigma} \overline{\cA}_g,
 \]
 covers $\RR^2$. To be specific, $\P_{\Sigma}$ is covered thrice, 
 $\H_{\Sigma}\setminus \P_{\Sigma}$ is covered twice, and
 $\RR^2\setminus \H_{\Sigma}$ is covered once.
\end{proposition}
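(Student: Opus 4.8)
The plan is to reduce the statement to a computation with three trinomials that share a common ``shell'' hyperplane arrangement, and then use the order map of the lopsided coamoeba to count coverings. First I would normalize: after an integer affine transformation and multiplication by a monomial, assume the marked monomial is the constant $1$, so that $f(z) = 1 + f_1 z^{\a_1} + f_2 z^{\a_2}$, and the four members of $\Sigma$ are obtained by the sign flips $(\pm f_1, \pm f_2)$. Since negating \emph{both} unmarked coefficients is the same as translating $\theta$ by a lattice vector in $\pi L$ (it multiplies $\hat f(\theta)$ by a global argument obtainable from a monomial), the four coamoebas come in two pairs of translates; more importantly, for a trinomial each $\overline{\cA}_g$ is a \emph{closed half-space-like} region bounded by the single hyperplane $\H_g$, and $\H_g$ depends only on the edge data, so $\H_\Sigma$ is the arrangement of the (at most three) lines $\<\a_i,\theta\>\equiv c_i$, $\<\a_1-\a_2,\theta\>\equiv c$. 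This already identifies the strata $\P_\Sigma$, $\H_\Sigma\setminus\P_\Sigma$, $\RR^2\setminus\H_\Sigma$.

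Next I would make precise the shape of a single trinomial coamoeba. For $g = c_0 + c_1 z^{\a_1} + c_2 z^{\a_2}$ the condition $\theta\notin\cL_g$ is, by the lopsidedness description at the end of Section~\ref{sec:coamoebas}, that the three unit vectors $\hat g_0(\theta),\hat g_1(\theta),\hat g_2(\theta)$ do not positively span $\CC$, equivalently one of the three intermediate angles exceeds $\pi$; and $\cA_g = \cL_g$ for a trinomial (its complement is cut out by colopsidedness, as the shell equals the boundary, cf.\ the example $1+z_1+z_2$). So $\T^2\setminus\overline{\cA}_g$ is exactly the set where the three phases $\varphi_0(\theta),\varphi_1(\theta),\varphi_2(\theta)$, read on the circle, lie in an open half-circle — two antipodal-complementary regions, one for each of the two cyclic orders, separated precisely by $\H_g$ (where two phases become antipodal). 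Thus over a point $\theta\notin\H_\Sigma$, for each of the four sign choices $g\in\Sigma$ we get a definite "covered/not covered," and I want to count how many of the four $g$ have $\theta\in\overline{\cA}_g$.

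The combinatorial heart is then the following elementary lemma about three points on a circle: given phases $\varphi_0,\varphi_1,\varphi_2$ (none antipodal), among the four configurations obtained by flipping $\varphi_1\mapsto\varphi_1+\pi$ and/or $\varphi_2\mapsto\varphi_2+\pi$, exactly one fails to positively span (i.e.\ exactly one lies in an open half-circle). I would prove this by a short case analysis on the cyclic arrangement of $\{\varphi_0,\varphi_1,\varphi_2,\varphi_1+\pi,\varphi_2+\pi\}$ on the circle — essentially, flipping moves a point to its antipode, and among the four resulting triples precisely the one for which all three points fall into a common open semicircle is selected by choosing each of $\varphi_1,\varphi_2$ to land on the ``correct side'' of $\varphi_0$. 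Granting this, for $\theta\notin\H_\Sigma$ exactly one $g\in\Sigma$ has $\theta\notin\overline{\cA}_g$, so exactly three have $\theta\in\overline{\cA}_g$: $\RR^2\setminus\H_\Sigma$ is covered thrice — wait, that contradicts the claim of "once.'' So I must recount: the statement asserts the \emph{generic} point is covered \emph{once}, meaning exactly one of the four $\overline{\cA}_g$ contains it; hence the lemma I actually need is that among the four sign-flips exactly \emph{three} positively span (exactly three have $\theta\notin\overline{\cA}_g$), equivalently exactly one fails to be colopsided. I would recheck the small lemma in that form — it is the same case analysis, just tracking which count I land on — and this sign-bookkeeping is the step I expect to be the main obstacle, since it is easy to be off by the role of ``marked vs.\ unmarked'' and by the identification of negating both coefficients with a translation.

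Finally, the boundary strata: on $\H_\Sigma\setminus\P_\Sigma$ exactly one line of the arrangement passes, so for generic $\theta$ on it exactly one $g\in\Sigma$ has $\theta\in\partial\overline{\cA}_g=\H_g$, while the limiting count from either adjacent chamber bumps the covering number from $1$ to $2$ (the coamoeba that was "just missing" now includes the boundary line, and by conjugation-symmetry/closedness the count is the max of the two neighboring values); on $\P_\Sigma$ two lines meet, so by the same limiting argument applied twice the count goes from $1$ to $3$. I would phrase this cleanly by noting $\overline{\cA}_\Sigma$-membership is upper semicontinuous in $\theta$ and that crossing each line $\H_g$ toggles membership in exactly that one $\overline{\cA}_g$; combined with the generic count $1$ this forces $2$ on codimension-one strata and $3$ on the codimension-two points $\P_\Sigma$, completing the proof.
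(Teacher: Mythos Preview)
Your route is quite different from the paper's. The paper simply applies an integer affine transformation to reduce to the case where $A$ is the vertex set of the standard simplex, and then invokes the explicit description of the coamoeba of $1+z_1+z_2$ from \cite{FJ15} and \cite{NS13} (cf.\ Figure~\ref{fig:simplexcoamoeba}); the covering multiplicities can then be read off by inspection. Your direct phase-combinatorics argument is more self-contained and can be made to work, but two points need correction.

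First, the claim that ``$\H_g$ depends only on the edge data'' and that $\H_\Sigma$ consists of three lines is wrong. For the edge $[\a_0,\a_1]$ the corresponding line in $\H_g$ is the locus where $\hat g_0(\theta)$ and $\hat g_1(\theta)$ are antipodal, and this shifts by $\pi$ in the normal direction when the sign of the $\a_1$-coefficient flips. So each of the three edge directions contributes \emph{two} parallel families to $\H_\Sigma$, and every individual line of $\H_\Sigma$ lies in $\H_g$ for exactly \emph{two} of the four $g\in\Sigma$. Consequently your boundary step ``crossing each line toggles membership in exactly that one $\overline{\cA}_g$'' is off: crossing toggles membership in two of the four coamoebas, and the two toggles have opposite effect (which is why the generic count stays at $1$ on both sides). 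On the line itself both of those two $g$ have $\theta\in\H_g\subset\overline{\cA}_g$, while for the remaining two $g$ the three phases include a coincident pair and are strictly colopsided; this gives the count $2$ on $\H_\Sigma\setminus\P_\Sigma$. At a point of $\P_\Sigma$ three lines (one from each edge direction) meet, and a short check shows exactly three of the four $g$ have that point in their shell, while the fourth has all three phases equal; this gives the count $3$.

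Second, the combinatorial lemma is stated with the roles inverted twice. The clean version is: for phases $\varphi_0,\varphi_1,\varphi_2$ on $S^1$ with no two antipodal, among the four triples $(\varphi_0,\,\varphi_1+\epsilon_1\pi,\,\varphi_2+\epsilon_2\pi)$ with $\epsilon_i\in\{0,1\}$, exactly \emph{one} positively spans $\CC$ (equivalently, has all three gaps of length $<\pi$). Since ``positively spans'' is precisely ``not colopsided'' for a trinomial, and $\cA_g=\cL_g$ for trinomials, this says exactly one $g\in\Sigma$ has $\theta\in\cA_g$, giving generic multiplicity $1$. You land on the right number, but through cancelling sign errors; writing out one explicit example (say $\varphi_0=0$, $\varphi_1,\varphi_2$ small and positive) fixes the orientation once and for all.
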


\begin{proof}
 After applying an integer affine transformations, 
 we reduce to the case when $A$ consist of the vertices of the standard 
 simplex. This case that follows from the
 description in \cite{FJ15} and \cite{NS13}, see also Figure~\ref{fig:simplexcoamoeba}.
\end{proof}

\begin{corollary}
If $f(z)$ is a bivariate trinomial, then $\Area(\overline{\cA}_f) = \pi^2$.
\end{corollary}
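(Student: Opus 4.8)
The plan is to deduce the corollary directly from Proposition~\ref{pro:unioncovers} by a symmetry-and-counting argument. First I would observe that the four trinomials in $\Sigma = \Sigma(f)$ are obtained from one another by flipping the signs of the two unmarked monomials, which amounts to translating the (multivalued) coamoeba in $\RR^2$ by the two vectors $\pi\a$ corresponding to those monomials, relative to the marked one. Since these translations are by half-periods of the lattice $\pi\ZZ A$ (here $\ZZ A = \ZZ^2$ after an integer affine transformation, using the action described after Example~\ref{ex:simplexcoamoeba}), each $\overline{\cA}_g$ for $g \in \Sigma$ is a translate of $\overline{\cA}_f$ and in particular has the same area; call it $a = \Area(\overline{\cA}_f)$.

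The second step is the area bookkeeping. Integrating the covering multiplicity from Proposition~\ref{pro:unioncovers} over one fundamental domain of $\T^2$, which has area $(2\pi)^2 = 4\pi^2$, gives
\[
 \sum_{g\in\Sigma} \Area(\overline{\cA}_g) = 1\cdot\Area\big(\RR^2\setminus\H_\Sigma\big) + 2\cdot\Area\big(\H_\Sigma\setminus\P_\Sigma\big) + 3\cdot\Area(\P_\Sigma).
\]
Here $\H_\Sigma$ is a finite hyperplane (line) arrangement and $\P_\Sigma$ a finite set of points, so both have measure zero; the right-hand side therefore equals $\Area(\RR^2\setminus\H_\Sigma) = 4\pi^2$. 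The left-hand side is $4a$ by the first step, so $a = \pi^2$, i.e.\ $\Area(\overline{\cA}_f) = \pi^2$.

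I do not expect a serious obstacle here; the corollary is essentially a restatement of the proposition once one notices the translation symmetry. The one point that needs a little care is justifying that the four coamoebas $\overline{\cA}_g$, $g\in\Sigma$, genuinely have equal area — i.e.\ that sign flips of monomials act as measure-preserving translations on the multivalued coamoeba in $\RR^2$. This follows from the fact that multiplying a coefficient $f_k$ by $-1 = e^{i\pi}$ shifts $\arg(f_k)$ by $\pi$, which by the same computation as in \eqref{eqn:explicitordermap} translates $Z(f)$'s argument image by the vector determined by $\a_k$; combined with the integer affine normalization this is a genuine rigid translation of $\T^2$, hence area-preserving. Alternatively, one can bypass the symmetry entirely by noting the proof of Proposition~\ref{pro:unioncovers} reduces to the standard simplex, where $\overline{\cA}_g = \pi^2$ can be read off directly from Figure~\ref{fig:simplexcoamoeba}, and area is invariant under the integer affine transformation used. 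Either route closes the argument in a line or two.
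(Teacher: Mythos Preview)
Your argument is correct and essentially identical to the paper's: observe that the four coamoebas in $\Sigma$ are translates of one another and hence have equal area, then use Proposition~\ref{pro:unioncovers} to conclude that this common area is $\tfrac14\cdot 4\pi^2 = \pi^2$. The one quibble is that \eqref{eqn:explicitordermap} is not the right pointer for the translation claim (that formula concerns the order map, not how the coamoeba moves under coefficient sign changes), but your alternative route via reduction to the standard simplex is exactly how the paper justifies it and suffices.
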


\begin{proof}
 The coamoebas appearing in the union $\overline{\cA}_{\Sigma}$, when considered in $\RR^2$,
 are merely translations of eachother. Hence, they have equal area. As
 they cover the torus once a.e., 
 and $\Area(\T^2) = 4 \pi^2$, the result follows. 
\end{proof}

Notice that a bivariate trinomial is not supported on a circuit,
but on the vertex set of a simplex.
Let $f_{\hat k}$ denote the image of $f$ under the projection
$\pr_k\colon \CC_*^A \rightarrow \CC_*^{A_k}$.
As shown in \cite{FJ15} the family of trinomials $f_{\hat k}$, $k=1, \dots, 4$,
contains all necessary information about the lopsided coamoeba $\cL_f$.

\begin{lemma}
\label{lem:twocolopsidedtrinomial}
 Let $A$ be a planar circuit, and let $f\in \CC_*^A$.
 Assume that $\theta\in\T$ is generic in the sense that no two 
 components of $\hat f(\theta)$ are antipodal,
 and assume furthermore that $f$ is \emph{not} colopsided at $\theta$.
 Then, exactly two of the trinomials $f_{\hat 1}, \dots, f_{\hat{4}}$ are colopsided at $\theta$.
\end{lemma}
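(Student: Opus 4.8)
The plan is to exploit the colopsidedness criterion in its angular form, as recalled at the end of Section~\ref{sec:coamoebas}: a polynomial $g$ is colopsided at a generic $\theta$ (no two components of $\hat g(\theta)$ antipodal) precisely when the largest intermediate angle among the components of $\hat g(\theta)$ exceeds the sum of the remaining intermediate angles, equivalently when the points $\hat g_k(\theta)\in S^1$ all lie in an open half-plane through the origin. So fix a generic $\theta$ at which $f$ is \emph{not} colopsided, and consider the four unit vectors $v_k = \hat f_k(\theta)\in S^1$, $k=0,\dots,3$ (relabeling $A = \{\a_0,\dots,\a_3\}$). That $f$ is not colopsided means the $v_k$ do \emph{not} lie in any open half-plane; genericity means no $v_k = -v_\ell$. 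The trinomial $f_{\hat k}$ is colopsided at $\theta$ iff the three vectors $\{v_\ell : \ell\neq k\}$ \emph{do} lie in an open half-plane. So the statement to prove becomes purely combinatorial: \emph{given four points on the circle, in no open half-plane and with no two antipodal, exactly two of the four triples obtained by deleting one point lie in some open half-plane.}

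**The combinatorial core.** I would argue as follows. Order the four points cyclically around $S^1$ as $w_1, w_2, w_3, w_4$ (this is an order on $\{v_0,\dots,v_3\}$; since no two are antipodal and they are distinct generically, the cyclic order is well-defined). The four consecutive gaps $\alpha_1 = \angle(w_1,w_2), \dots, \alpha_4 = \angle(w_4,w_1)$ are positive and sum to $2\pi$. The key observation: a subset of the circle lies in an open half-plane iff one of its complementary gaps (gaps between cyclically consecutive chosen points) is strictly greater than $\pi$. For the full set $\{w_1,\dots,w_4\}$ the gaps are exactly $\alpha_1,\dots,\alpha_4$; \textbf{not} colopsided means $\alpha_i < \pi$ for all $i$ (if some $\alpha_i>\pi$ the four points would lie in an open half-plane; $\alpha_i=\pi$ is excluded by... actually $\alpha_i = \pi$ would force the endpoints of that gap to be antipodal only if... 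I should double-check, but at worst it is a boundary case handled by genericity of $\theta$ — the honest statement is $\alpha_i \le \pi$ cannot give colopsidedness, and we can assume $\alpha_i<\pi$ strictly or handle $=\pi$ directly). Now delete $w_2$: the triple $\{w_1,w_3,w_4\}$ has gaps $\alpha_1+\alpha_2$ (the merged gap), $\alpha_3$, $\alpha_4$. It lies in an open half-plane iff $\alpha_1+\alpha_2>\pi$. Similarly deleting $w_k$ produces a triple whose only possibly-large gap is $\alpha_{k-1}+\alpha_k$, and that triple is colopsided iff $\alpha_{k-1}+\alpha_k > \pi$. So I must show: among the four "adjacent pair sums" $\alpha_4+\alpha_1,\ \alpha_1+\alpha_2,\ \alpha_2+\alpha_3,\ \alpha_3+\alpha_4$, exactly two exceed $\pi$.

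**Finishing the count.** Write $s_1 = \alpha_1+\alpha_2$ and $s_2 = \alpha_3+\alpha_4$; then $s_1 + s_2 = 2\pi$, so exactly one of $s_1>\pi$, $s_2>\pi$ holds — unless $s_1 = s_2 = \pi$, a measure-zero coincidence I can rule out by the genericity of $\theta$ (or note it forces $w_1, w_3$ antipodal... again a case to verify, but excludable). Likewise with $t_1 = \alpha_2+\alpha_3$, $t_2 = \alpha_4+\alpha_1$: exactly one of these exceeds $\pi$. The four adjacent pair sums are precisely $\{s_1, t_1, s_2, t_2\}$, so exactly two of them exceed $\pi$, hence exactly two of the deleted-point triples are colopsided. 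Translating back: exactly two of $f_{\hat 1},\dots,f_{\hat 4}$ are colopsided at $\theta$, which is the claim. (One should also confirm that the colopsidedness of $f_{\hat k}$ really is governed only by the merged gap and not spuriously by $\alpha_{k+1}$ or $\alpha_{k-2}$ — but since each of the unmerged gaps of the triple equals one of the original $\alpha_i < \pi$, this is immediate.)

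**Main obstacle.** The genuinely delicate point is the interface between the combinatorial argument and the hypotheses "generic" and "not colopsided." I need the cyclic order on the four $v_k$ to be unambiguous, the strict inequalities $\alpha_i<\pi$ to follow from non-colopsidedness together with genericity, and the ties $s_1=s_2=\pi$ etc.\ to be excluded. All of these reduce to: no two of the $v_k = \hat f_k(\theta)$ are antipodal — which is exactly the stated genericity of $\theta$ — plus the (harmless) observation that if three of them were forced into a closed half-plane boundary configuration it would again produce an antipodal pair among the four. I expect this bookkeeping, rather than the half-plane/gap dictionary itself, to be where the real care is needed; the dictionary between "lies in an open half-plane," "some complementary gap exceeds $\pi$," and "colopsided" should be stated once as a small lemma (or cited from \cite{FJ15}, which already records the intermediate-angle criterion) and then applied mechanically.
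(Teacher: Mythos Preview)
Your argument is correct, and the hedging about tie cases is unnecessary: each of the equalities $\alpha_i=\pi$, $s_j=\pi$, $t_j=\pi$ forces two of the points $v_k$ to be antipodal (since the relevant arc of length $\pi$ has two of the $v_k$ as its endpoints), which is exactly what the stated genericity hypothesis rules out. The worry about distinctness is likewise harmless---if two $v_k$ coincide, one gap becomes zero and the rest of the computation goes through verbatim.

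Your route, however, is genuinely different from the paper's. You set up a global cyclic order on the four points, record the four gaps $\alpha_i<\pi$, and observe that the four ``merged gaps'' obtained by deleting one point pair off into two complementary pairs each summing to $2\pi$; hence exactly one from each pair exceeds $\pi$, giving exactly two colopsided triples. The paper instead argues locally: fixing an arbitrary $\a_k$ and the line through $\hat f_k(\theta)$, it shows directly that among the three trinomials containing $\a_k$, at least one is colopsided and at least one is not; since $\a_k$ is arbitrary and each point lies in exactly three of the four trinomials, a short counting forces the total to be exactly two. Your approach is more explicit---it actually identifies which two trinomials are colopsided (those obtained by deleting two cyclically adjacent points)---while the paper's approach trades the cyclic-order bookkeeping for a symmetry-plus-counting argument that never needs to label the points in order.
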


\begin{proof}
 Fix an arbitrary point $\a_1\in A$, and let $\ell\subset \CC$ denote the real 
 subvector space containing $\hat f_{1}(\theta)$. 
 As $f$ is not colopsided at $\theta$,
 both half spaces relative $\ell$ contains at least one component of $\hat f(\theta)$.
 There is no restriction to assume that the upper half space contains the two components
 $\hat f_{2}(\theta)$ and $\hat f_{3}(\theta)$,
 and that the latter is of greatest angular distance from $\hat f_{1}(\theta)$.
 Then, $f_{\hat 4}$ is colopsided at $\theta$. Furthermore,
 we find that $f_{\hat 2}$ is not colopsided at $\theta$, for if it where then 
 so would $f$.
 As $\a_1\in A_4$ and $\a_1\in A_2$, there is at least one trinomial 
 obtained from
 $f$ containing $\a_1$ which is not colopsided at $\theta$, 
 and at least one which is
 colopsided at $\theta$. As $\a_1$ was arbitrary, it follows that exactly two of the trinomials
$f_{\hat 1}, \dots, f_{\hat4}$ are colopsided at $\theta$, and exactly two are not.
\end{proof}

\begin{proof}[Proof of Theorem \ref{thm:areabound}]
 By containment, it holds that $\Area(\cA_f)\leq \Area(\cL_f)$, and
 thus it suffices to calculate the area of $\cL_f$.
 By \cite[pro. 3.4]{FJ15}, we have that
 \begin{equation}
 \label{eqn:unionoftrinomials}
  \cL_f = \bigcup_{k=1}^4 \cA_{f_{\hat k}}.
 \end{equation}
 For a generic point $\theta\in \cL_f$, Lemma~\ref{lem:twocolopsidedtrinomial} 
 gives
 that $\theta$ (and, in fact, a small neighborhood of $\theta$) is contained in the 
 interior of exactly two out of the four coamoebas in the
 right hand side of \eqref{eqn:unionoftrinomials}. Hence,
 \[
  \Area(\cL_f) = \frac12 \big(\Area(\cA_{f_{\hat 1}})+ \dots +\Area(\cA_{f_{\hat 4}})\big) = 2\pi^2.\qedhere
 \]
\end{proof}

\begin{proof}[Proof of Theorem \ref{thm:unimodularmaximalthm}]
 To prove the \emph{if} part, we will prove that $A$ admits an equimodular 
 triangulation only if, after applying an integer affine transformation,
 it is equal to the point configuration of either 
 Example~\ref{ex:maximalareasquare} or Example~\ref{ex:maximalareainterior}.
 Assume that $\a_1, \a_2$, and $\a_3$ are vertices of $\N_A$. 
 After applying an integer affine transformation, 
 we can assume that 
 $\a_1 = k_1 \e_1$, that $\a_2 = k_2\e_2$ with $k_1 \geq k_2$, 
 and that $\a_3 = \0$. Notice that such
 a transformation rescales $A$, 
 though it does not affect the area of the 
 coamoeba $\cA_f$ \cite{FJ15}.
 Let $\a_4 = m_1 \e_1 + m_2 \e_2$.
 
 If $A$ is a vertex circuit, then each triangulation of $A$ consist of
 two simplices, which are of equal area by assumption. Comparing the areas of the 
 subsimplices of $A$, we obtain the relations 
 \[
  |k_1 k_2 - k_1m_2 - k_2 m_1| = k_1 k_2 \quad \text{and} \quad k_1 m_2 = k_2 m_1.
 \]
 In $m$, this system has $(k_1, k_2)$ as the only nontrivial solution,
 and we conclude that $A$ is the unit square, up to integer affine 
 transformations.
 
 If $A$ is a simplex circuit, then $A$ has one triangulation with
 three simplices of equal area. Comparing areas, we obtain the relations
 \[
  3 k_1 m_2 = 3 k_2 m_1 = k_1 k_2.
 \]
 Thus, $3m_1 = k_1$ and $3 m_2 = k_2$, and we conclude that $A$ is the 
 simplex from Example~\ref{ex:maximalareainterior}, 
 up to integer affine transformations.

To prove the \emph{only if}-statement,
consider $f\in \CC_*^A$.
Let $S = \{\a_1, \a_2\}\subset A$ be such that the line segment 
$[\a_1, \a_2]$ is interior to $\N_A$.
Applying an integer affine transformation, we can assume that $[\a_1, \a_2]\subset\RR\e_1$,
and that $\a_3$ and $\a_4$ lies in the upper and lower half space respectively. 
Then, the hyperplane arrangement $\cA_{f_S}\subset \T$ consist of 
$\Len[\a_1, \a_2]$-many lines, each parallel to the $\theta_2$-axis.
If $\a_3 = m_{31}\e_1 + m_{32}\e_2$ and $\a_4 = m_{41} \e_1 + m_{42}\e_2$,
then $\hat f_{3}(\theta)$ and $\hat f_{4}(\theta)$ takes
$m_{32}$ respectively $m_{42}$ turns around the origin when $\theta$
traverses once a line of $\cA_{f_S}$.
Notice that $\cA_{f_S}\subset \overline{\cL}_f$,
as $\hat f_{1}(\theta)$ and $\hat f_{2}(\theta)$ are antipodal
for $\theta \in \cA_{f_S}$. That is, for such $\theta$,
$\hat f_S(\theta)$ is contained
in a real subvector space $\ell_\theta\subset \CC$.

Assume that $f$ is colopsided for some $\theta\in\cA_{f_S}$,
so that in particular $\theta \not\in \cA_f$.
If $\theta \in \H_f$, then at exactly one of the points 
$\hat f_{3}(\theta)$ and $\hat f_{4}(\theta)$ is contained in $\ell_\theta$,
for otherwise $f$ would not be colopsided at $\theta$.
By wiggling $\theta$ in $\cA_{f_S}$ we
can assume that $\theta \not \in \H_f$.
Under this assumption, we find that $\theta\notin \overline{\cA}(f)$. 
Thus, there is a neighborhood $N_\theta$ which is separated 
from $\overline{\cA}_f$.  
As $\theta\in \overline{\cL}_f$, the intersection 
$N_\theta \cap \overline{\cL}_f$
has positive area, implying that $\Area(\overline{\cA}_f) < \Area(\overline{\cL}_f)$.

Thus, if $f$ is such that $\Area(\overline{\cA}_f) = 2\pi^2$,
then $f$ can never be colopsided in $\cA_{f_S}$. In particular,
for $\theta\in\cA_{f_S}$ such that $\hat f_{3}(\theta)\in \ell$, 
it must hold $\hat f_{4}(\theta)\in \ell$, and vice versa.
As there are $2m_{32}$ points of the first kind, and $2m_{42}$ points of
the second kind, it holds that $m_{32}=m_{42}$. Hence, the simplices
with vertices $\{\a_1, \a_2, \a_3\}$ and $\{\a_1, \a_2, \a_4\}$ have equal area. 

If $A$ is a vertex cicuit, this suffices in order to conclude that
$A$ admits an equimodular triangulation. 
If $A$ is a simplex circuit, then we can assume that  
$\a_1$ is an interior point of $\N_A$. Repeating the
argument for either $S=\{\a_1,\a_3\}$ or 
$S = \{\a_1, \a_4\}$ yields that $A$ has a triangulation with
three triangles of equal area. That is, it admits an equimodular triangulation.
\end{proof}

\section{Critical points}
\label{sec:criticalpoints}
Let $C(f)$ denote the critical points of $f$, that is, the variety defined by \eqref{eqn:criticalpoint}.
Let $\I = \Arg(C(f))$ denote the coamoeba of $C(f)$. We will say that $\I$ is the set of \emph{critical arguments}
of $f$.
In this section we will prove that, under certain assumptions on $A$,
the set $\I$ is an index set of the coamoeba complement.
That it is necessary to impose assumptions on $A$ 
is related to the fact that an integer affine transformation
acts nontrivially on the set of critical points $C(f)$.

Let $A$ be a circuit, with the elements $\a\in A$ ordered so that
it has a Gale dual  $B = (B_1, B_2)^t$ such that $B_1\in \RR_+^{m_1+1}$ 
and that $B_2 \in \RR_-^{m_2+1}$.
That is, $B_1$ has only positive entries, while $B_2$ has only negative entries.
We have that $m_1 + m_2  = n$. 
Let $A = (A_1, A_2)$ denote the corresponding decomposition
of the matrix $A$.
We will say that $A$ is in \emph{orthogonal form} if
\begin{equation}
\label{eqn:Aunionofsimplicesform}
   A = \left(\begin{array}{cc} 1 & 1 \\ \tilde A_1 & 0 \\ 0 & \tilde A_2 \end{array}\right),
\end{equation}
where $\tilde A_1$ is an $m_1\times(m_1+1)$-matrix and $\tilde A_2$ is an 
$m_2\times (m_2 + 1)$-matrix. 
In particular, the Newton polytopes $\N_{A_1}$ and
$\N_{A_2}$ has $\0$ as a relatively interior point, and as their only
intersection point.

With $A$ in the form \eqref{eqn:Aunionofsimplicesform}, 
we can act by integer affine
transformations affecting $\tilde A_1$ and $\tilde A_2$ separately.
Therefor, if $A$ is in orthagonal form, then we can assume that
\begin{equation}
\label{eqn:formoftildeA}
 \tilde A_k = (-p_1\e_1, \dots, -p_{m_k}\e_{m_k}, \a_{m_k+1}),
\end{equation}
where $p_1, \dots, p_{m_k}$ are positive integers, 
and hence $\a_{m_k+1}$ has only
positive coordinates.
We will say that $A$ is in \emph{special orthogonal form} if 
\eqref{eqn:formoftildeA} holds.
The main result of this section is the following lemma and theorem.

\begin{lemma}
\label{lem:orthogonalForm}
 Each circuit $A$ can be put in (special) orthogonal form by applying
 an integer affine transformation.
\end{lemma}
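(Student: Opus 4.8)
The plan is to prove Lemma~\ref{lem:orthogonalForm} in two stages: first show that any circuit can be brought to the \emph{orthogonal form} \eqref{eqn:Aunionofsimplicesform}, and then refine the two blocks separately to obtain the \emph{special orthogonal form} \eqref{eqn:formoftildeA}. The starting point is the observation that for a circuit the Gale dual $B$ is a single column vector $\b = (\b_0,\dots,\b_{N-1})^t$, with entries $\b_k = \delta_k V_k \neq 0$ by \eqref{eqn:Bcircuit}. After reordering the points $\a_k$ we may assume all positive entries come first, giving $B = (B_1,B_2)^t$ with $B_1 \in \RR_+^{m_1+1}$, $B_2 \in \RR_-^{m_2+1}$; since $B$ has $N = n+2$ rows, $m_1 + m_2 = n$ as claimed. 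Writing $A = (A_1, A_2)$ for the matching column blocks, the defining relation $A\cdot B = 0$ becomes $A_1 B_1 + A_2 B_2 = 0$.

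First I would use this relation to locate a common lattice point. The key point is that because $B_1$ has all entries of the same (positive) sign and $B_2$ all of the same (negative) sign, the vector $q := A_1 B_1 = -A_2 B_2$ lies, after dividing by the positive scalar $\sum_{k} (B_1)_k = \sum_k |(B_2)_k| = 2\Vol(A)$ (using the top row of $A$ consisting of ones), in the relative interior of both $\N_{A_1}$ and $\N_{A_2}$ --- indeed it is a strictly positive convex combination of the columns of each block. This point $q$ is rational; to make it an integer lattice point I would first argue that one can choose $A$ so that $\ZZ A = \ZZ^n$ (the sublattice generated by $A$ is saturated after an $\SL_n(\ZZ)$ change of coordinates composed with the standard reduction used throughout the paper), and then that $q$ itself, being the barycentric-type combination with the specific weights $\b_k$, is in fact a lattice point of $\ZZ A$; alternatively, and more cleanly, translate so that the affine span issue is handled and apply an integer affine transformation moving $q$ to $\0$. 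After translating $q$ to the origin, the column blocks $A_1$ and $A_2$ have Newton polytopes both containing $\0$ in their relative interiors. Their relative interiors must then intersect only at $\0$: any other common point would force an affine dependence among all $N$ points contradicting that $A$ is a circuit (the only affine dependence is the one given by $B$, whose support is all of $A$). Hence $\N_{A_1}$ and $\N_{A_2}$ span complementary linear subspaces of dimensions $m_1$ and $m_2$, and choosing lattice bases adapted to this splitting --- again an integer affine transformation, of block-diagonal type in $\SL_n(\ZZ)$ --- puts $A$ into exactly the shape \eqref{eqn:Aunionofsimplicesform} with $\tilde A_1$ an $m_1 \times (m_1+1)$ matrix and $\tilde A_2$ an $m_2 \times (m_2+1)$ matrix, each of full rank with $\0$ in the interior of its convex hull.

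For the second stage, I would work on one block, say $\tilde A_1$, the argument for $\tilde A_2$ being identical. Here $\tilde A_1$ is a $(m_1+1)$-point configuration in $\RR^{m_1}$ forming a simplex with $\0$ in its interior; as remarked just before the lemma, we are still free to act by an integer affine transformation on the $\tilde A_1$-coordinates alone. Label the columns $\a_1,\dots,\a_{m_1},\a_{m_1+1}$. I would single out the vertex $\a_{m_1+1}$ and apply the (unimodular up to the reduction convention) transformation sending $\a_1,\dots,\a_{m_1}$ to negative multiples $-p_1\e_1,\dots,-p_{m_1}\e_{m_1}$ of the coordinate vectors: this is possible because $\a_1,\dots,\a_{m_1}$ are linearly independent (they are $m_1$ of the $m_1+1$ vertices of a full-dimensional simplex, hence affinely independent, and since none of them is $\0$ and $\0$ is in the interior, they are in fact linearly independent), so some element of $\SL_{m_1}(\ZZ)$ followed by the customary rescaling sends them to axis directions; the signs $p_j > 0$ can be arranged because $\0$ lies in the interior, forcing $\a_{m_1+1}$ to lie in the open orthant opposite to $\{-p_j\e_j\}$, i.e.\ to have strictly positive coordinates. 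This is precisely \eqref{eqn:formoftildeA}. Doing the same to $\tilde A_2$ completes the reduction to special orthogonal form.

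The main obstacle I expect is the lattice bookkeeping in the first stage: ensuring that the common interior point $q = A_1 B_1$ can genuinely be moved to the origin by an \emph{integer} affine transformation (not merely a rational one), and that the subsequent block-diagonalization respects $\ZZ^n$. The honest way to handle this is to note that translating $A$ by any of its own points is an integer affine transformation, and that after such a translation the lattice $\ZZ A$ can be assumed equal to $\ZZ^n$ (composing with an element of $\GL_n(\ZZ)$), at which point one checks that $q$, being the unique (up to scaling) solution of the circuit relation evaluated against the integer Gale vector $\b$ with $\sum_k \b_k^{+} = \sum_k \b_k^{-}$, already lies in $\ZZ^n$ --- or, failing a slick argument, one simply observes that it suffices to translate so that \emph{some} vertex of $\N_{A_1}$ coincides with a vertex of a translate of $\N_{A_2}$ and then invoke dimension-counting plus the circuit property to force the complementary-subspace splitting, since the precise location of the shared interior point is irrelevant to the \emph{form} \eqref{eqn:Aunionofsimplicesform}. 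Everything else is a routine application of the Smith normal form / $\SL_n(\ZZ)$ action already used freely elsewhere in the paper.
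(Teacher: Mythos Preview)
Your approach is genuinely different from the paper's. The paper argues purely linear-algebraically: it left-multiplies the $(1+n)\times N$ matrix $A$ by the matrix $T$ whose rows are $\e_1$ together with integer bases for the left kernels $\ker(A_1)$ and $\ker(A_2)$; the block form \eqref{eqn:Aunionofsimplicesform} is then automatic, and the only thing to verify is $\det T\neq 0$, which is done by a short computation using $AB=0$. No geometric identification of the common interior point is needed, and the passage to special form is left implicit (as remarked just before the lemma).

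Your geometric route is appealing but has a real gap at precisely the point you flag. The intersection of the affine spans of $A_1$ and $A_2$ is the single point $q/\Vol(A)$ (not $q/2\Vol(A)$; the common value of $\sum_k (B_1)_k$ and $\sum_k |(B_2)_k|$ is $\Vol(A)$), and this point is rational but need not lie in $\ZZ^n$. For instance, with $A=\{(1,-1),(2,1),(0,0),(3,0)\}$ and $B=(1,1,-1,-1)^t$ the two affine lines meet at $(3/2,0)$. No integer translation sends this to the origin, so your first stage cannot be completed within $\operatorname{GL}_n(\ZZ)\ltimes\ZZ^n$; the fallback suggestions (translate a vertex of $A_1$ onto one of $A_2$, or appeal to saturation of $\ZZ A$) do not manufacture the required lattice point either. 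What actually rescues the situation is that the paper's $T$ is \emph{not} unimodular in general---in the example above $\det T=2$---so the phrase ``integer affine transformation'' here has to be read as left multiplication by an integer matrix of the form $\left(\begin{smallmatrix}1&0\\ *&M\end{smallmatrix}\right)$ with $\det M\neq 0$, not $\det M=\pm 1$. Under that broader reading your argument can be salvaged (scale, then translate, then block-diagonalize), but as written the lattice bookkeeping is incomplete.
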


\begin{theorem}
\label{thm:criticalpoints}
 Let $A$ be a circuit in special orthogonal form.
 Then, for each $f\in \CC_*^A$, the set of critical arguments
 is an index set of the complement of $\overline{\cA}_f$.
\end{theorem}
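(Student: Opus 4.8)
The plan is to establish two things: that the $\Vol(A)$ critical points of $f$ have distinct arguments, and that each critical argument lies in a different connected component of the complement of $\overline{\cA}_f$. For the counting, I would use the special orthogonal form \eqref{eqn:Aunionofsimplicesform}--\eqref{eqn:formoftildeA}: the critical point system $\partial_1 f = \dots = \partial_n f = 0$ becomes, after clearing the monomial factors coming from $\tilde A_k = (-p_1\e_1,\dots,-p_{m_k}\e_{m_k},\a_{m_k+1})$, a system whose Newton polytope is governed by the circuit structure, and Kushnirenko/B\'ezout-type bounds should give exactly $\Vol(A)$ solutions in $\CC_*^n$ counted with multiplicity; one must check nondegeneracy of the circuit forces these to be simple and distinct. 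In fact I expect the cleanest route is to reduce $f$ (via $\pr_B$) and solve the critical equations explicitly: because $A$ splits as two simplices meeting only at $\0$, the equations $\partial_j f = 0$ separate into the two blocks, each block expressing the variables $z_1,\dots,z_{m_1}$ (resp.\ the rest) in terms of a single auxiliary quantity, and substituting back into $f=0$ (or rather the critical constraint relating the two blocks) yields a single binomial-type equation of degree $\Vol(A)$ — mirroring the computation in the proof of Proposition~\ref{pro:discriminantmonomialarguments}.

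Next I would show the critical arguments are distinct. This is essentially Lemma~\ref{lem:uniquelastarguments} in disguise: having solved the critical system, the argument of a critical point $z$ is determined by an equation of the shape $z^{\a} = (\text{const depending only on } f)\cdot \zeta$ where $\zeta$ ranges over $\Vol(A)$-th roots of a fixed number, and the hypothesis $\ZZ A = \ZZ^n$ (which we may assume, being part of putting $A$ in the stated form, or arrangeable by the integer affine transformation in Lemma~\ref{lem:orthogonalForm}) ensures the map "root of unity $\mapsto$ argument vector in $\T^n$" is injective. So the $\Vol(A)$ critical points map to $\Vol(A)$ distinct points of $\T^n$.

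Then I would show each critical argument $\theta \in \I$ lies outside $\overline{\cA}_f$, i.e.\ that $f$ is colopsided at $\theta$. The key observation: if $z$ is a critical point, then on the torus orbit through $z$ the function $r \mapsto f(re^{i\theta})$, after factoring out a common phase, behaves like a real polynomial $\RR_+^n \to \RR$ whose gradient vanishes at the point $|z|$ corresponding to $z$ — and in special orthogonal form the monomials $z^{-p_j\e_j}$ together with $z^{\a_{m_k+1}}$ are arranged so that, at a critical point, all terms $\hat f_k(\theta)$ line up on a single real line through the origin with a consistent sign pattern dictated by the Gale dual signs $\delta_k$. This should be extracted from Proposition~\ref{pro:discriminantmonomialarguments} applied to the truncations: the orthogonal form guarantees $\delta_k$ is constant on each block, so colopsidedness at $\theta$ reduces to the observation that all $\hat f_k(\theta)$ of a given block point the same way. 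Concretely I would argue $\ord_B(f)(\theta)$ lands at an endpoint-region value of the zonotope $\Z_B$ forcing $\theta$ into the complement. Finally, counting: $\I$ has $\Vol(A)$ elements, it is disjoint from $\overline{\cA}_f$, and the complement has at most $\Vol(A)$ components (the conjecture, proved for the lopsided coamoeba and hence applicable here), so by the pigeonhole argument each component contains exactly one critical argument — which is precisely the assertion that $\I$ is an index set. To upgrade from "at most one per component" to "exactly one per component," I would invoke that $\overline{\cA}_f \subset \overline{\cL}_f$ together with the order map bijection \eqref{eqn:ordermap}: the $\Vol(A)$ values of $\ord_B(f)$ on the $\Vol(A)$ critical arguments are forced to be the $\Vol(A)$ distinct lattice points in $\inter\Z_B$, so they sit in distinct components of the $\overline{\cL}_f$-complement, a fortiori in distinct components of the larger coamoeba's complement.

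\textbf{Main obstacle.} The delicate point is the colopsidedness claim at critical arguments: showing that critical points really do project into the \emph{complement} rather than merely avoiding some generic stratum. The issue is that a critical point of $f$ need not lie on $Z(f)$, so one cannot directly say $f(z)=0$; instead one must argue that along the real trace $r\mapsto f(re^{i\theta})$ the value stays strictly away from $0$ precisely because the gradient condition pins $|z|$ as an interior extremum with the right sign. This is where the hypothesis of \emph{special} orthogonal form (not merely orthogonal form) earns its keep — it forces the sign configuration of $\{\delta_k\}$ within each simplex block to be homogeneous, which is exactly what makes "gradient zero $\Rightarrow$ colopsided" true rather than false. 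I would handle this by a careful case analysis on the two blocks, reducing each to the one-variable / single-simplex computation already present in Section~\ref{sec:RealPoints}.
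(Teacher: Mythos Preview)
Your strategy is heading in the right direction, but you miss the simplification that makes the paper's argument short, and there is a genuine gap in the discriminantal case.

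The key observation you do not make: in special orthogonal form \eqref{eqn:formoftildeA} the variable $z_i$ appears in exactly two monomials of $f$, so each $z_i\partial_i f$ is a \emph{binomial}. The critical equation $z_i\partial_i f=0$ thus says directly that two monomials of $f$ agree, and passing to arguments yields
\[
\hat f_{0}(\theta)=\cdots=\hat f_{m_1}(\theta)\quad\text{and}\quad \hat f_{m_1+1}(\theta)=\cdots=\hat f_{n+1}(\theta)
\]
for every $\theta\in\I$. There is then no need to solve the critical system, count via Kushnirenko, or analyse $r\mapsto f(re^{i\theta})$: the $n+2$ points $\hat f_k(\theta)$ assume only two values on $S^1$, so $f$ is colopsided at $\theta$ whenever those two values are not antipodal. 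Your phrase ``all terms line up on a single real line with sign pattern $\delta_k$'' describes precisely the antipodal situation, where $f$ is \emph{not} colopsided---so your colopsidedness mechanism is aimed at the wrong case. For separating the colopsided $\theta$'s into distinct components the paper does not compute order-map values but simply observes that on any segment in $\RR^n$ joining two points of $\I$ the displayed equalities cannot all persist, so some pair of monomials becomes antipodal at an interior point of the segment, which therefore lies in $\overline{\cL}_f$.

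The antipodal case is the actual gap. There, after a rotation, $\hat f_k(\theta)=\delta_k$ for all $k$, which by Proposition~\ref{pro:discriminantmonomialarguments} means $\Arg(f)\in\cA_\Delta$. Your order-map pigeonhole then fails because the lattice in \eqref{eqn:ordermap} has only $\Vol(A)-1$ points, and whether the exceptional $\theta$ lies in $\T^n\setminus\overline{\cA}_f$ genuinely depends on the moduli $|f_k|$ (for a simplex circuit it can go either way). The paper resolves this by invoking Theorems~\ref{thm:configurationspaceinteriorpoint} and~\ref{thm:configurationspacevertex}, which decide exactly when the exceptional $\theta$ indexes an extra component; the abused definition of index set is designed to absorb both outcomes. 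Your proposed ``case analysis reducing to Section~\ref{sec:RealPoints}'' does not reach this: the needed input is the configuration-space analysis of Section~\ref{sec:configurationspace}.
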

 
The conditions of Theorem~\ref{thm:criticalpoints}
can be relaxed in small dimensions. When $n=1$, it is enough
to require that $\0$ is an interior point of $\N_A$.
When $n=2$, for generic $f$, 
it is enough to require that each quadrant $Q$ fulfills that
$\overline Q\setminus\{\0\}$ has nonempty intersection with $A$.

 \begin{proof}[Proof of Lemma \ref{lem:orthogonalForm}]
  Let $\u_1\dots, \u_{m_2}$ be a basis for the left kernel $\ker(A_1)$, 
  and let $\v_1, \dots, \v_{m_1}$ be a basis for the left kernel $\ker(A_2)$.
  Multiplying $A$ from the left by
   \[
   T = \big(\e_1, \v_1, \dots, \v_{m_1}, \u_1,\dots, \u_{m_2}\big)^t,
  \]
  it takes the desired form. We need only to show that
  $\det(T)\neq 0$.
  
  Notice that $\ker(A_1)\cap\ker(A_2) = 0$, since $A$ is assumed to be of 
  full dimension. Assume that there is a linear combination
  \[
   \lambda_0 \e_1 + \sum_{i=1}^{m_1} \lambda_i \v_i + \sum_{j=1}^{m_2} \lambda_j \u_j = 0.
  \]
  Then, since $B$ is a Gale dual of $A$,
  \[
   0 = \left(\sum_{j=1}^{m_2} \lambda_j \u_j\right) A B = (0,\dots, 0, -\lambda_0, \dots, -\lambda_0)B = -\lambda_0\sum_{\a\in A_2}\b_\a = \lambda_0\Vol(A),
  \]
  and hence $\lambda_0 = 0$. This implies that
  $\sum_{i=1}^{m_1} \lambda_i \v_i \in \ker(A_2)$, and hence
  $\sum_{i=1}^{m_1} \lambda_i \v_i = \0$. Thus, $\lambda_i = 0$ for all $i$ 
  by linear independence of the vectors $\v$. 
  Then, linear independence of the vectors $\u_j$ imply that
  $\lambda_j = 0$.
 \end{proof}

 \begin{proof}[Proof of Theorem~\ref{thm:criticalpoints}]
 We find that
 \begin{align*}
 z_i\partial_i f(z) &= -p_i f_iz^{\a_i} + \<\a_{m_k}, \e_i\> f_{m_1}z^{\a_{m_1}}, \quad i = 0, \dots, m_1\\
  z_j\partial_j f(z) &= -p_j f_iz^{\a_j} + \<\a_{n+1}, \e_j\> f_{n+1}z^{\a_{n+1}}, \quad j = m_1+1, \dots, n.
 \end{align*}
 Hence, for each $\theta\in \I$, it holds that
 \begin{equation}
 \label{eqn:ciriticalpointmonomialarguments}
 \hat f_{0}(\theta) = \dots = \hat f_{m_1}(\theta) 
 \quad \text{and} \quad
 \hat f_{m_1+1}(\theta) = \dots = \hat f_{n+1}(\theta).
 \end{equation}
 In particular, $f$ is colopsided at $\theta$ unless, after a rotation, $\hat f_{k}(\theta) = \delta_k$
 for all $k$. In the latter case, we refer to Theorems~\ref{thm:configurationspaceinteriorpoint} and ~\ref{thm:configurationspacevertex}. 

To see that the points $\theta\in\I$ for which $f$ is colopsided at $\theta$ are contained
in distinct connected components of the complement of $\overline{\cL}_f$, consider a line segment $\ell$
in $\RR^n$ with endpoints in $\I$. Then, not all identities of \eqref{eqn:ciriticalpointmonomialarguments}
can hold identically along $\ell$. Since the argument of each monomial is linear in $\theta$, this
implies that for a pair such that the identity in \eqref{eqn:ciriticalpointmonomialarguments}
does not hold identically along $\ell$, there is an intermediate point $\theta\in\ell$ for which the
corresponding monomials are antipodal, and hence $\theta\in\overline{\cL}_f$.
\end{proof}

\section{On systems supported on a circuit}
\label{sec:systems}
In this section we will consider a system 
\begin{equation}
\label{eqn:system1}
 F_1(z) = F_2(z) = 0
\end{equation}
of two bivariate polynomials. We will write $f(z) = 0$ for the
system \eqref{eqn:system1}. The system is said to be generic if it
has finitely many roots in $\CC_*^2$, and it is
said to be supported on a circuit $A$ if the supports
of $F_1$ and $F_2$ are contained in, but not necessarily equal to, $A$.
That is, we allow coefficients in $\CC$ rather than $\CC_*$.
By the Bernstein--Kushnirenko theorem, a generic system $f(z) = 0$ has 
at most $\Vol(A)$-many roots in $\CC_*^2$.
However, if $f$ is real, then fewnomial
theory states that a generic system $f(z)$ has at most three roots 
in $\RR_+^2 = \Arg^{-1}(0)$.
We will solve the complexified fewnomial problem,
i.e., for $f(z)$ with complex coefficients we will bound the number of 
roots in each sector $\Arg^{-1}(\theta)$.
Our intention is to offer a new approach to
fewnomial theory. We will restrict to
the case of simplex circuits, for the following two reasons.
Firstly, it allows for a simpler exposition. 
Secondly, for vertex circuits our method recovers
the known (sharp) bound, while for simplex circuits 
we obtain a sharpening of the fewnomial bound.

\begin{theorem}
\label{thm:FewnomialSimplexCircuits}
 Let $f(z) = 0$ be a generic system of two bivariate polynomials
 supported on a planar simplex circuit $A \subset \ZZ^2$.
 Then, each sector $\Arg^{-1}(\theta)$ 
 contains at most two solutions of $f(z) = 0$.
\end{theorem}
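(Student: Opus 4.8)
The plan is to reduce the system $f(z)=0$ to a single bivariate polynomial whose coamoeba complement, or rather whose critical arguments, control the solution count in a fixed sector, and then apply Theorem~\ref{thm:criticalpoints}. Concretely: a solution of $F_1(z)=F_2(z)=0$ lying in $\Arg^{-1}(\theta)$ for a fixed $\theta$ is a point $z=re^{i\theta}$ with $r\in\RR_+^2$ satisfying both equations. Taking suitable real linear combinations of $F_1$ and $F_2$ (over $\RR$, after rotating so that the relevant data is real at $\theta$ in the sense of Section~\ref{sec:RealPoints}), we can eliminate one of the monomials supported on $A$; since $A$ is a simplex circuit with one interior point $\a_{n+1}$, eliminating, say, the interior monomial produces a polynomial $g$ supported on the three vertices — but that throws away too much. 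Instead I would eliminate one vertex monomial, so that the resulting pencil $\{\lambda F_1+\mu F_2\}$ contains, for generic $(\lambda:\mu)$, a polynomial $g_{\lambda,\mu}$ still supported on the full circuit $A$, and arrange that a common solution in the sector forces a specific real polynomial supported on $A$ to vanish at $re^{i\theta}$ with the extra constraint that $r$ is a critical point.

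The key steps, in order. First, I would set up the elimination: for $z=re^{i\theta}$ in the sector, $F_1(z)=0$ and $F_2(z)=0$ become two $\CC$-linear conditions on the vector of monomial values $(z^{\a_0},\dots,z^{\a_{n+1}})$; since $A$ is a circuit, its Gale kernel is one-dimensional, and the condition that a given real slice of monomials can solve both equations translates — after the standard rotation making things real at $\theta$ — into the vanishing of a real polynomial $g$ supported on $A$ together with a rank condition. Second, I would observe that a solution in the sector that is \emph{not} merely a transverse intersection forces $r\in\RR_+^2$ to be simultaneously a zero of $g$ and a critical point of $g$, i.e.\ $g$ has a singular point over $\Arg^{-1}(\theta)$, which by Proposition~\ref{pro:discriminantmonomialarguments} means $\Arg(g)\in\cA_\Delta$. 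Third — the main counting step — I would put $A$ in special orthogonal form via Lemma~\ref{lem:orthogonalForm} and invoke Theorem~\ref{thm:criticalpoints}: the critical arguments of $g$ form an index set of the complement of $\overline{\cA}_g$, which has at most $\Vol(A)$ components; but the sector $\theta$ corresponds to asking how many of the $\Vol(A)$ critical points lie over a single fixed argument, and the structure of the critical equations \eqref{eqn:ciriticalpointmonomialarguments} — two groups of monomials with equal phases — pins this number down. For a simplex circuit the two groups are $\{\a_0,\dots,\a_n\}$ (the vertices, all with $\delta_k=1$) and $\{\a_{n+1}\}$; the interior point contributes the single binomial specialization $\Delta_\kappa$ of Lemma~\ref{lem:uniquelastarguments}'s proof, and counting preimages of a fixed $\theta$ under the argument-of-$f_{n+1}$ map gives the bound.

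Fourth, I would convert "critical point count over a sector" into "solution count over a sector" for the original system: a genuine (transverse) solution of $f(z)=0$ in $\Arg^{-1}(\theta)$ corresponds to a point where the pencil polynomial $g$ vanishes to first order with $r$ not critical, and a short argument with the $A$-discriminant and continuity of roots (as in the proof of Proposition~\ref{pro:testingtheta}, perturbing $f_{n+1}$ by $e^{i\varepsilon}$) shows each such solution accounts for at most one element among the colopsided-region data, of which there are at most two once the interior point is isolated. The hard part, which I expect to be the real obstacle, is \textbf{step three made precise}: Theorem~\ref{thm:criticalpoints} tells us the critical arguments are spread across distinct complement components, but to extract "at most two per sector" I must understand exactly how the fibers of $\theta\mapsto$ (critical data) sit inside $\T^2$, i.e.\ I need the sharper statement that fixing $\theta$ leaves at most two choices of the remaining phase $\varphi_\theta$ compatible with criticality — and for a simplex circuit this requires analysing the binomial $\Delta_\kappa$ together with the two-group phase constraint \eqref{eqn:ciriticalpointmonomialarguments} rather than just counting components. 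Everything else is bookkeeping with integer affine transformations and the already-established lemmas of Sections~\ref{sec:RealPoints}--\ref{sec:criticalpoints}.
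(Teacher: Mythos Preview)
Your approach diverges substantially from the paper's, and the central link you rely on --- that solutions of the system $F_1=F_2=0$ in a fixed sector correspond to critical points of some single polynomial $g$ supported on $A$ --- is never made precise, and I do not see how it can be. A common zero $z$ of $F_1$ and $F_2$ lies on every member of the pencil $\lambda F_1+\mu F_2$, but there is no reason for $z$ to satisfy $\partial_1 g=\partial_2 g=0$ for any particular $g$ in that pencil: the system imposes two linear conditions on the monomial vector $(z^{\a_0},\dots,z^{\a_3})$, whereas criticality of $g$ imposes the two conditions $\sum_k d_k\<\a_k,\e_j\>z^{\a_k}=0$, and these coincide only when the coefficient matrix of the system has the very special form $(d_k\<\a_k,\e_j\>)_{j,k}$ for some vector $d$. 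Your step two asserts the link as a rank condition coming from the Gale kernel, but the Gale dual governs relations among the monomial \emph{values}, not among their gradients. Consequently Theorem~\ref{thm:criticalpoints}, which indexes complement components of $\overline{\cA}_g$ by critical arguments of a single $g$, has no evident bearing on the sector count for the system, and the difficulty you flag in step three is moot because steps one and two do not go through.

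The paper's argument does not use Section~\ref{sec:criticalpoints} at all. It first reduces $f$ to a pair of trinomials sharing two monomials (the form~\eqref{eqn:circuitsystem}), with $\a_0,\a_1$ chosen to be vertices so that Lemma~\ref{lem:argumentsvarycontinuously} applies. It then studies the lopsided coamoeba of the \emph{system}, $\cL_f=\cA_{F_1}\cap\cA_{F_2}$. Two auxiliary binomials $g_1,g_2$ built from the monomials $f_1z^{\a_0},f_3z^{\a_1},z^{\a_2}$ give a hyperplane arrangement $H=\cA_{g_1}\cup\cA_{g_2}$ cutting $\T^2$ into $\Vol(A)$ parallelograms; Lemmas~\ref{lem:binomialcoamoebaincomplement} and~\ref{lem:shellpointsofsystemiscontainedinclosure} show that for nonreal $f$ each cell contains exactly one polygon of $\cL_f$, carrying exactly one root. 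The real case is handled by perturbing $f\mapsto f^\varepsilon$ and invoking continuity of root-arguments: a sign analysis of the imaginary parts of $\hat f_1(\tilde\theta)$ and $\hat f_3(\tilde\theta)$ (split according to whether $f_2,f_4$ have equal or opposite signs) shows that a small neighbourhood of any $\theta$ meets at most two polygons of $\cL_{f^\varepsilon}$, whence at most two roots lie over the sector. The missing idea in your proposal is this direct cell-decomposition of $\T^2$ by $H$ together with the colopsidedness and perturbation argument controlling how many cells a single point can border.
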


\subsection{Reducing $f(z)$ to a system of trinomials}
A generic system $f(z)$ is, by taking appropriate linear combinations,
equivalent to a system of two trinomials whose support intersect in a dupleton.
That is, we can assume that $f(z)$ is in the form
\begin{equation}
\label{eqn:circuitsystem}
 \left\{\begin{array}{lllllll}
         F_1(z) = &f_1z^{\a_0} & + &  z^{\a_2} & + & f_2 z^{\a_3}& = 0\\
         F_2(z) = & f_3 z^{\a_1} & + & z^{\a_2} & + & f_4 z^{\a_3} & = 0,
        \end{array}\right.
\end{equation}
with coefficients in $\CC_*$. We will use the notation
\[
 A = \left(\begin{array}{cccc}
            1 & 1 & 1 & 1 \\
            \a_0 & \a_1 & \a_2 & \a_3
           \end{array}\right)
\quad \text{and} \quad
 \hat A = \left(\begin{array}{cc}
            1 & 0 \\
            0 & 1 \\
            A_1 & A_2
           \end{array}\right),
\]
where $A_k$ denotes the support of $F_k$ (notice that this 
differs from the notation used in previous sections).
Notice that we can identify a system $f(z)$ in the form
\eqref{eqn:circuitsystem} with its corresponding vector 
in $\CC_*^{\hat A}$.

When reducing $f(z)$ to the form \eqref{eqn:circuitsystem}
by taking linear combinations,
there is a choice of which monomials to eliminate in $F_1$ and
$F_2$ respectively. In order for the arguments of the roots of
$f(z)=0$ to depend continously on the coefficients,
we need to be careful with which choice to make.

\begin{lemma}
\label{lem:argumentsvarycontinuously}
Let $\ell$ denote the line through $\a_2$ and $\a_3$, and let $\gamma$ be a 
compact path in $\CC_*^{\hat A}$. If $\ell$ intersect the interior of $\N_A$,
then the arguments of the solutions to $f(z) = 0$ vary continuously along 
$\gamma$.
\end{lemma}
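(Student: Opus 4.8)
The plan is to show that the only way the arguments of the roots of $f(z)=0$ could fail to vary continuously along $\gamma$ is if a root escapes to the boundary of $\CC_*^2$, i.e., if some coordinate of a root tends to $0$ or $\infty$, and then to rule this out using the Bernstein--Kushnirenko count together with the hypothesis that $\ell$ meets the interior of $\N_A$. First I would recall that the roots of $f(z)=0$ in $\CC_*^2$ depend continuously on the coefficients as long as the number of roots (with multiplicity) stays constant, and that by Bernstein--Kushnirenko this number is $\Vol(\N_A)$ for a generic system supported on $A$; since $\gamma$ is a path in $\CC_*^{\hat A}$, all coefficients of the trinomials stay nonzero, so the mixed volume defining the root count does not drop, and no root can disappear into a toric boundary stratum where a whole monomial of $F_1$ or $F_2$ would have to vanish. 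The subtlety is the degeneration in which a root's coordinate blows up while the system stays in $\CC_*^{\hat A}$: this is controlled by the Newton polygons $\N_{A_1}$, $\N_{A_2}$, and their Minkowski sum, and it corresponds to roots "at infinity" in the toric compactification associated to $\N_{A_1}+\N_{A_2}$.

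The key step is to analyze the facial systems. For a root to run off to the boundary, there must be a primitive direction $w\in\ZZ^2$ such that the leading forms $(F_1)_w$, $(F_2)_w$ (the restrictions of $F_1$, $F_2$ to the faces of $\N_{A_1}$, $\N_{A_2}$ with inner normal $w$) have a common root in $\CC_*^2$. Because each $F_k$ is a trinomial supported on $\{\a_0 \text{ or } \a_1, \a_2, \a_3\}$ with all coefficients nonzero, a proper face of $\N_{A_k}$ contains at most two of these three points, so $(F_k)_w$ is a binomial, whose zero set in $\CC_*^2$ is a codimension-one subtorus. For the two binomial facial systems to be compatible, the two edges selected by $w$ must be parallel; this forces $w$ to be normal to the edge $[\a_2,\a_3]$, i.e., to the line $\ell$, since $\a_2$ and $\a_3$ are the two shared points. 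But the hypothesis that $\ell$ meets the interior of $\N_A$ means that, on each side of $\ell$, the Newton polygon $\N_{A_k}$ has a vertex (namely $\a_0$ for $k=1$ and $\a_1$ for $k=2$) not on $\ell$; hence the face of $\N_{A_k}$ with normal $w$ (for $w\perp\ell$ pointing away from that vertex) is exactly the edge $[\a_2,\a_3]$ for both $k$, and the corresponding facial system is $\{z^{\a_2}+f_2 z^{\a_3}=0,\ z^{\a_2}+f_4z^{\a_3}=0\}$, which has no solution in $\CC_*^2$ unless $f_2=f_4$; for $w\perp\ell$ pointing the other way, one of $(F_1)_w$, $(F_2)_w$ is a single monomial, hence has no root in $\CC_*^2$ at all. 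So generically — and this is where "generic system" enters — no root escapes to infinity along $\gamma$.

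With the facial analysis in hand, the argument concludes as follows: along $\gamma$ the number of roots in $\CC_*^2$, counted with multiplicity, is constant and equal to $\Vol(\N_A)$, and no root leaves any compact subset of $\CC_*^2$; standard continuity of roots of polynomial systems (e.g.\ via the implicit function theorem away from the discriminant, patched by a monodromy/winding argument across the discriminant) then gives that the multiset of roots, and in particular the multiset of their arguments $\Arg(z)\in\T^2$, varies continuously along $\gamma$. The main obstacle I expect is making the "no escape to infinity" step fully rigorous: one must show that the relevant facial systems have no common $\CC_*^2$-root for \emph{every} primitive $w$, not just $w\perp\ell$, and handle the borderline case $f_2=f_4$ (where the line $\ell$ hypothesis still saves us because then $[\a_2,\a_3]$ is not a common edge in the required way, or one reduces the system). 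It is precisely for this reason that the hypothesis singles out the line through $\a_2$ and $\a_3$ rather than through some other pair, and I would state the degeneration criterion in terms of the normal fan of $\N_{A_1}+\N_{A_2}$ to keep the case analysis clean.
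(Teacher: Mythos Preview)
Your strategy and the paper's are two sides of the same coin. The paper argues via amoebas: it shows that the hypothesis on $\ell$ forces the normal fans of $\N_{A_1}$ and $\N_{A_2}$ to share no one-dimensional cone, whence $\Am_{F_1}\cap\Am_{F_2}$ is bounded for each $f\in\gamma$, and compactness of $\gamma$ then gives a uniform bound. Your facial-system formulation is the Bernstein-theoretic dual of this: ``no common one-dimensional cone'' is exactly the statement that for every direction $w$, at least one of $(F_1)_w,(F_2)_w$ is supported on a single vertex, hence a monomial with no root in $\CC_*^2$.

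Your execution, however, contains a genuine error in the case analysis. You claim that for $w\perp\ell$ ``pointing away from that vertex'' both faces equal $[\a_2,\a_3]$, leading you to the facial system $\{z^{\a_2}+f_2z^{\a_3}=0,\ z^{\a_2}+f_4z^{\a_3}=0\}$ and the awkward exception $f_2=f_4$. But you have just (correctly) observed that the hypothesis places $\a_0$ and $\a_1$ on opposite sides of $\ell$; therefore the outward normals to the edge $[\a_2,\a_3]$ in $\N_{A_1}$ and in $\N_{A_2}$ point in \emph{opposite} directions, and there is no single $w$ for which both faces are $[\a_2,\a_3]$. For either orientation of $w\perp\ell$, one face is the edge and the other is the singleton $\{\a_0\}$ or $\{\a_1\}$. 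The remaining possibility --- edges $[\a_0,\a_j]$ and $[\a_1,\a_{j'}]$ sharing an outward normal for some $j,j'\in\{2,3\}$ --- is also excluded: if $j=j'$ the three points $\a_0,\a_j,\a_1$ would be collinear, contradicting nondegeneracy of the circuit; if $j\neq j'$ one obtains the incompatible inequalities $\<w,\a_2\> > \<w,\a_3\>$ and $\<w,\a_3\> > \<w,\a_2\>$. Thus \emph{no} direction is problematic, the $f_2=f_4$ worry is a phantom, and no genericity hypothesis is needed anywhere. With this correction your argument closes and coincides with the paper's.
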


\begin{proof}
It is enough to show that along a compact path $\gamma$, the set 
\begin{equation}
\label{eqn:amoebaovergamma}
\bigcup_{f\in \gamma}\Am_f = \bigcup_{f\in\gamma}\Log(Z(f))
\end{equation}
is bounded, for it implies that for $f\in\gamma$, the roots of
$f$ are uniformly separated from the boundary of $\CC_*^{\hat A}$.

We first claim that our assumptions imply that the
normal fans of $\N_{A_1}$ and $\N_{A_2}$ has no coinciding one dimensional 
cones.
Indeed, these fans has a coinciding one dimensional cone if and only if the Newton
polytopes $\N_{A_1}$ and $\N_{A_2}$ has facets $\Gamma_1$ and $\Gamma_2$
with a common outward normal vector $\n$.
As $A$ is a circuit, it holds that $\Gamma_1 = \Gamma_2 = [\a_2, \a_3] \subset \ell$.
Since the normal vector $\n$ is common for $\N_{A_1}$ and $\N_{A_2}$, we find that
$\Gamma_1$ (and $\Gamma_2$) is a facet of $\N_A$. But then $\ell$ contains a
facet of $\N_A$, and hence it cannot intersect the interior of $\N_A$, a contradiction.

Consider a point $f\in \CC_*^A$. Since the normal fans $\N_{A_1}$ and $\N_{A_2}$
has no coinciding one dimensional cones, the intersection of the amoebas $\Am_{F_1}$ and $\Am_{F_2}$
is bounded (this follows, e.g., from the fact the amoeba has finite 
Hausdorff distance from the Archimedean tropical variety, 
see \cite{AKNR13}). Thus, the amoeba $\Am_f$ is bounded, say that $\Am_f\subset D(R_f)$ where $D(R_f)$
denotes the disk of radii $R_f$ centered around the origin. 
By continuity of roots, $\Am_{\tilde f}\subset D(R_f)$
for all $\tilde f$ in some neighborhood $N_f$ of $f$.
The compactness of $\gamma$ implies our result.
\end{proof}

In order for the assumptions of Lemma~\ref{lem:argumentsvarycontinuously}
to be fulfilled, for a simplex circuit $A$, we need that $\a_0$ and $\a_1$ are vertices
of $\N(A)$, see Figure~\ref{fig:systeminteriorpolytopes}.

\begin{figure}[h]
\centering
\includegraphics[height=20mm]{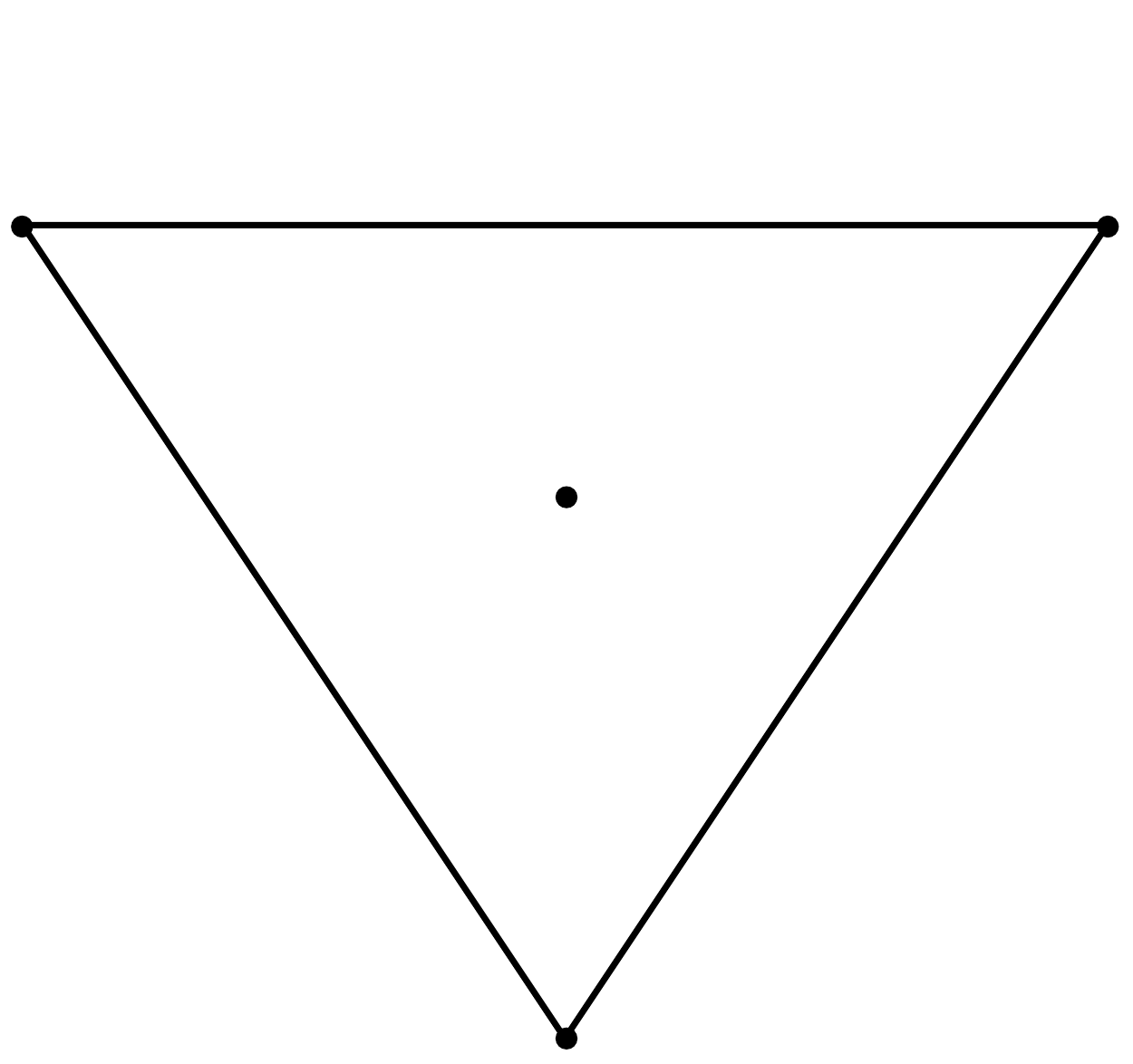}
\hspace{10mm}
\includegraphics[height=20mm]{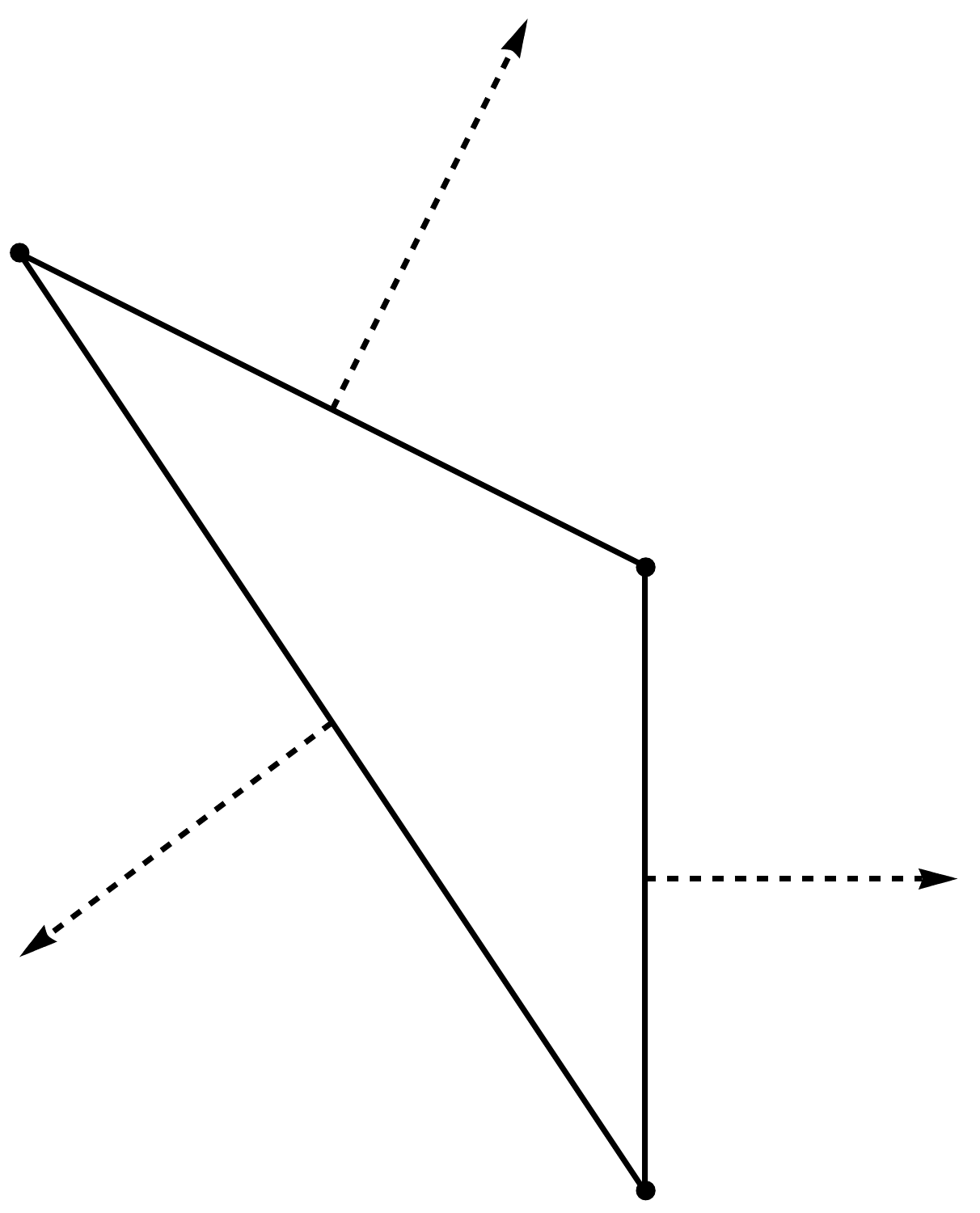}
\hspace{10mm}
\includegraphics[height=20mm]{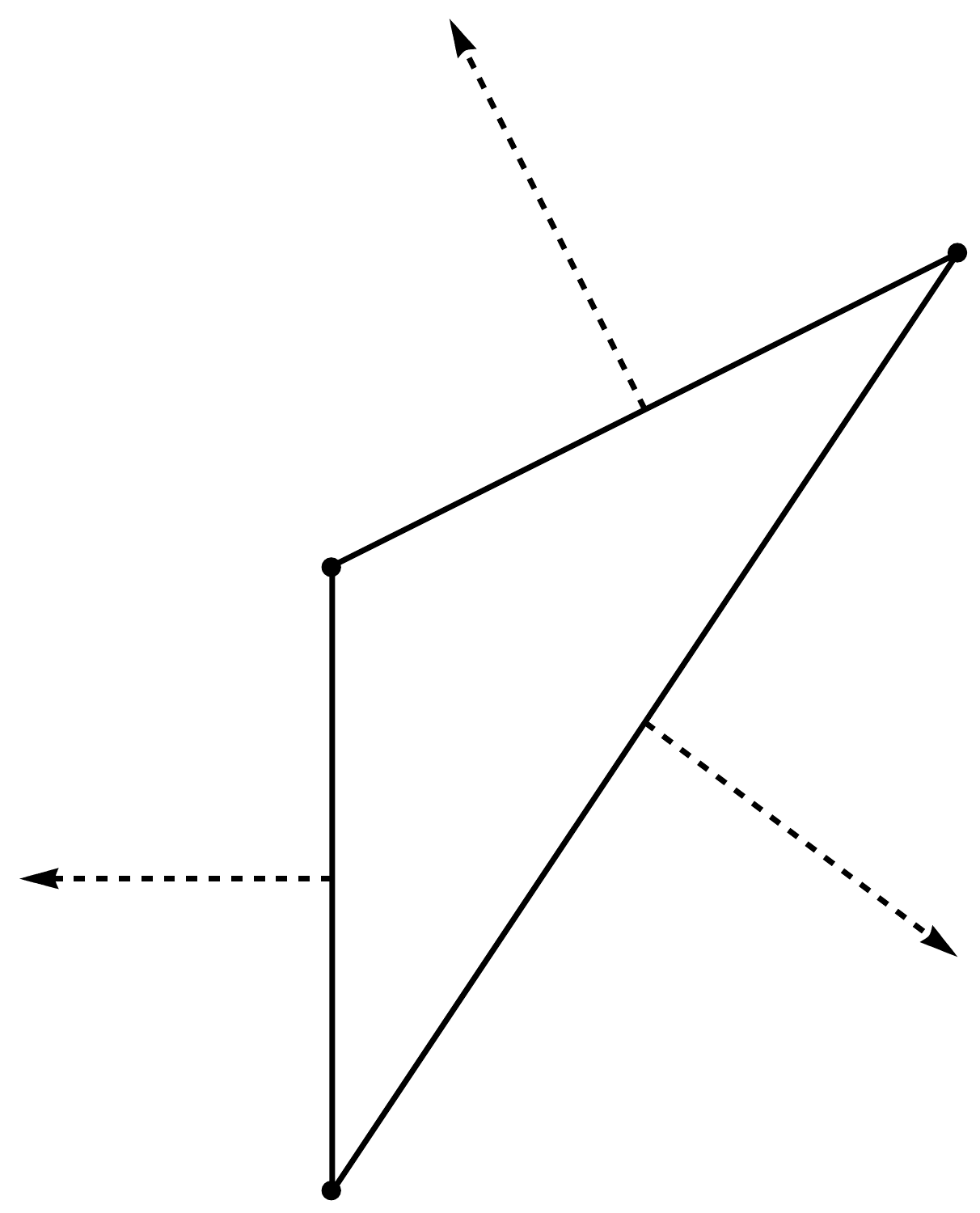}
\caption{The Newton polytopes $\N_A$, $\N_{A_1}$, and $\N_{A_2}$.}
\label{fig:systeminteriorpolytopes}
\end{figure}

\begin{proposition}
\label{pro:nonrealsystem}
If $f$ is nonreal at $\theta$, then
there is at most one zero of $f(z)=0$ contained in the sector 
$\Arg^{-1}(\theta)$.
\end{proposition}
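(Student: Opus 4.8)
The plan is to work in the reduced family $\CC_*^{\hat A}$ with $f(z)$ in the trinomial form \eqref{eqn:circuitsystem}, and to translate ``two roots in the same sector $\Arg^{-1}(\theta)$'' into a statement about the monomial argument maps $\hat f_k$. First I would fix $\theta$ and suppose, for contradiction, that $z$ and $w$ are two distinct roots of $f(z)=0$ with $\Arg(z) = \Arg(w) = \theta$. Writing $z = r\,e^{i\theta}$ and $w = s\,e^{i\theta}$ with $r,s \in \RR_+^2$, both $r$ and $s$ solve the \emph{real} system obtained by substituting $z_j = r_j e^{i\theta_j}$ into $F_1$ and $F_2$; each $F_k(r e^{i\theta})$ is a $\CC$-linear combination of the three positive monomials $r^{\a_i}$ with coefficients $\hat f_i(\theta)$ times the (positive) modulus. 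The key point is that, since the same $\theta$ works for two distinct positive vectors $r \neq s$, the real and imaginary parts of each equation $F_k(re^{i\theta})=0$, viewed as equations in $r \in \RR_+^2$, must be proportional — otherwise the common zero locus would be $0$-dimensional after fixing $\theta$ and one could not have two distinct solutions sharing the argument. Proportionality of the real and imaginary parts of $F_k(\cdot\, e^{i\theta})$ forces all three coefficients $\hat f_i(\theta)$ appearing in $F_k$ to lie on a common real line through the origin in $\CC$.

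Carrying this out for both $k=1$ and $k=2$: $F_1$ involves the monomials $\a_0,\a_2,\a_3$, so $\hat f_{\a_0}(\theta), \hat f_{\a_2}(\theta), \hat f_{\a_3}(\theta)$ are collinear (lie in a real subspace $\ell_1 \subset \CC$); $F_2$ involves $\a_1,\a_2,\a_3$, so $\hat f_{\a_1}(\theta), \hat f_{\a_2}(\theta), \hat f_{\a_3}(\theta)$ lie in a real subspace $\ell_2$. Since $\a_2$ and $\a_3$ are shared, $\hat f_{\a_2}(\theta)$ and $\hat f_{\a_3}(\theta)$ lie in $\ell_1 \cap \ell_2$; a small subtlety is the degenerate case where this intersection is just $\{0\}$, i.e.\ $\hat f_{\a_2}(\theta)$ and $\hat f_{\a_3}(\theta)$ both vanish — impossible, since $\hat f_i(\theta) \in S^1$. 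Hence $\ell_1 = \ell_2 =: \ell$ is a single real line, and all four $\hat f_k(\theta)$ lie in $\ell$. By the definition in Section~\ref{sec:RealPoints}, this says precisely that $f$ is \emph{real at $\theta$}, contradicting the hypothesis that $f$ is nonreal at $\theta$. Therefore at most one root lies in $\Arg^{-1}(\theta)$.

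The step I expect to be the main obstacle is the rigorous justification that two distinct positive solutions $r \neq s$ of the $\theta$-specialized system force the real and imaginary parts of each $F_k(\cdot\,e^{i\theta})$ to be proportional as real functions on $\RR_+^2$ (equivalently, that the coefficients lie on a common real line). The clean way to see it: $\Re F_k(re^{i\theta})$ and $\Im F_k(re^{i\theta})$ are each of the form $\sum_{i} c_i^{(k)} r^{\a_i}$ with real $c_i^{(k)}$, and the locus $\{\Re F_k = \Im F_k = 0\}$ in the torus coordinates $r$ is, after taking $\Log$, the intersection of two ``lines'' (tropical-style hyperplanes) unless the two linear forms in $(r^{\a_0}, r^{\a_2}, r^{\a_3})$-space are parallel; two parallel such forms with real coefficients means $(\Im\hat f_i(\theta))_i$ is a real multiple of $(\Re \hat f_i(\theta))_i$, which is collinearity in $\CC$. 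I would phrase this via the observation that $r \mapsto (r^{\a_0} : r^{\a_2} : r^{\a_3})$ has image a $2$-dimensional subset, so a single linear equation cuts out a $1$-dimensional family while two independent ones cut out finitely many points; having $\ge 2$ solutions thus forces dependence. One should also remember to invoke Lemma~\ref{lem:argumentsvarycontinuously}'s hypotheses (so that the reduction to \eqref{eqn:circuitsystem} is the legitimate one, with $\a_0,\a_1$ vertices and $\ell$ meeting the interior of $\N_A$) only insofar as it is needed to make the form \eqref{eqn:circuitsystem} available; the argument above is otherwise self-contained.
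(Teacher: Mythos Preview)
Your approach is essentially the paper's own. The paper's entire proof is the observation that if a trinomial $F_k$ is nonreal at $\theta$, then the fiber $Z(F_k)\cap\Arg^{-1}(\theta)$ is a singleton; since $Z(f)\subset Z(F_k)$, the sector can contain at most one root. Your proposal is a correct unpacking of exactly this singleton claim.

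Two small sharpenings. First, your dimension count should yield ``at most one'' rather than ``finitely many'': because $\a_0,\a_2,\a_3$ are affinely independent, the map $r\mapsto (r^{\a_0}:r^{\a_2}:r^{\a_3})$ from $\RR_+^2$ to $\RR P^2$ is injective, and two independent homogeneous linear forms on $\RR^3$ cut out a single projective point; equivalently, divide $F_1(re^{i\theta})$ by $r^{\a_2}$ and substitute $u=r^{\a_0-\a_2}$, $v=r^{\a_3-\a_2}$ (a bijection $\RR_+^2\to\RR_+^2$) to get an affine $\RR$-linear system in $(u,v)$ whose coefficient matrix is nonsingular exactly when $F_1$ is nonreal at $\theta$. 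Second, the $\ell_1=\ell_2$ step is unnecessary: in Section~\ref{sec:systems} the system $f$ is declared real at $\theta$ as soon as \emph{each} $F_k$ is, with no requirement that the two lines agree (and in any case the shared monomial $z^{\a_2}$ with coefficient $1$ forces $\ell_1=\ell_2$ automatically, so your subtlety about $\ell_1\cap\ell_2$ never arises).
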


\begin{proof}
If $F_k$ is nonreal, then the fiber in $Z(F_k)$ over a point 
$\theta \in \cA_{F_k}$
is a singleton. Hence, if the number of roots of $f(z) = 0$ in
$\Arg^{-1}(\theta)$ is greater than one, then both $F_1$ and $F_2$ are
real at $\theta$.
\end{proof}

The implication of Proposition~\ref{pro:nonrealsystem} is that 
the complexified fewnomial problem reduces to the real
fewnomial problem. However, our approach is dependent on
allowing coefficients to be nonreal.
In fact, we will consider a partially complexified problem,
allowing $f_1, f_3\in \CC_*$ but requiring $f_2, f_4 \in \RR_*$.

\subsection{Colopsidedness}
We define the colopsided coamoeba of the system $f(z)$ by
\[
\cL_f = \cL_{F_1}\cap \cL_{F_2} = \cA_{F_1}\cap\cA_{F_2},
\]
where the last equality follows from \cite[cor.\ 3.3]{FJ15}. 
That is, $f$ is said to be colopsided at $\theta$ if either $F_1$ or $F_2$ is
colopsided at $\theta$.
We will say that $f$ is real at $\theta$ if both $F_1$ and $F_2$ are real at $\theta$.

The lopsided coamoeba $\cL_f$ consist of a number of polygons on $\T^2$,
possibly degenerated to singletons. The following two lemmas
will allow us to count the number of such polygons.

\begin{lemma}
\label{lem:binomialcoamoebaincomplement}
Assume that $f$ is nonreal.
Let $g$ be a binomial constructed by choosing two monomials from \eqref{eqn:circuitsystem},
possibly alternating signs.
 If $f_2$ and $f_4$ are of opposite signs, 
 then $\cA_g \subset \T^2\setminus\cL_f$.
 If $f_2$ and $f_4$ are of equal signs, then 
 $\cA_g \subset \T^2\setminus\cL_f$ \emph{except} for
 $g(z) = \pm(f_1z^{\a_0} - f_3z^{\a_1})$.
\end{lemma}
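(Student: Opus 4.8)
The plan is to analyze, for each possible binomial $g$, where its coamoeba $\cA_g$ sits relative to $\cL_f = \cA_{F_1}\cap \cA_{F_2}$, using the fact that $\theta\notin\cL_f$ exactly when $F_1$ or $F_2$ is colopsided at $\theta$. Recall that a trinomial $F_k$ is colopsided at $\theta$ precisely when the three phase vectors $\hat f(\theta)$ coming from its monomials do \emph{not} all lie in an open half-plane through the origin (equivalently, the largest intermediate angle exceeds the sum of the other two). So for each binomial $g$ built from two of the four monomials in \eqref{eqn:circuitsystem} (with signs), I would identify a trinomial $F_k$ among $F_1, F_2$ whose monomial set contains the two monomials of $g$, and show that on $\cA_g$ — where the two chosen phases of $F_k$ are antipodal — the third monomial of $F_k$ forces colopsidedness, hence $\theta\notin\cL_f$.

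First I would enumerate the binomials. The pairs of monomials available are $\{z^{\a_0},z^{\a_1}\}$, $\{z^{\a_0},z^{\a_2}\}$, $\{z^{\a_0},z^{\a_3}\}$, $\{z^{\a_1},z^{\a_2}\}$, $\{z^{\a_1},z^{\a_3}\}$, $\{z^{\a_2},z^{\a_3}\}$, and for each I allow a sign flip. A pair that lies entirely inside $\{z^{\a_0}, z^{\a_2}, f_2z^{\a_3}\}$ (the monomials of $F_1$, up to the coefficient $f_1$ on $z^{\a_0}$) is controlled by $F_1$; similarly for $F_2$. On $\cA_g$ the two monomials of $g$ point in opposite directions; the third monomial of the ambient trinomial $F_k$ is then either colinear with them — in which case $F_k$ is real at $\theta$, and nonreality of $f$ must be invoked to push $\theta$ off a codimension-$0$ set, or rather to note that the generic such $\theta$ already lies outside — or it is not colinear, in which case the three phases cannot lie in a common half-plane and $F_k$ is colopsided, giving $\theta\notin\cL_f$ immediately. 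This handles every pair except $\{z^{\a_0},z^{\a_1}\}$, which is not contained in the monomial set of either $F_1$ or $F_2$; that is the one genuine exception named in the statement, and here the sign of $g$ matters because of the interaction with the common monomial $z^{\a_2}$ and the sign configuration of $f_2, f_4$.

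The main obstacle, and the place I expect the real work, is the case $g(z) = \pm(f_1 z^{\a_0} \pm f_3 z^{\a_1})$ and more generally the bookkeeping of signs of $f_2$ and $f_4$. When $f_2$ and $f_4$ have \emph{opposite} signs, I would argue that even for $g = \pm(f_1z^{\a_0}- f_3z^{\a_1})$ the point $\theta\in\cA_g$ still lies outside $\cL_f$: on $\cA_g$ the phases of $f_1z^{\a_0}$ and $f_3z^{\a_1}$ are antipodal, and then, comparing with the two trinomials, the relative position of $z^{\a_2}$ (common to both, same phase in $F_1$ and $F_2$) against $f_2 z^{\a_3}$ and $f_4z^{\a_3}$ (which are antipodal to each other by the opposite-sign hypothesis) forces at least one of $F_1, F_2$ to be colopsided. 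When $f_2$ and $f_4$ have \emph{equal} signs, $f_2z^{\a_3}$ and $f_4z^{\a_3}$ have the same phase, and one checks that there genuinely exist $\theta\in\cA_g$ for $g = \pm(f_1z^{\a_0}-f_3z^{\a_1})$ at which both trinomials fail to be colopsided — so $\cA_g$ need not avoid $\cL_f$, explaining the exception. Throughout I would keep the nonreality hypothesis on $f$ in play to ensure that the antipodality conditions defining $\cA_g$ meet the colopsidedness (open half-plane) conditions transversally, so that the containments hold as stated without measure-zero caveats; concretely, nonreality of $f$ rules out the degenerate situation in which all four phases of $\hat f(\theta)$ are colinear on a positive-dimensional set of $\theta$.
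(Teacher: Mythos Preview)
Your plan follows the paper's approach --- split into binomials whose two monomials both lie in a single $F_k$, then treat the cross-pair $\{\a_0,\a_1\}$ --- but the execution has a genuine gap. In the first case you write that if the third phase of $F_k$ is colinear with the two antipodal ones, ``$F_k$ is real at $\theta$, and nonreality of $f$ must be invoked.'' But $F_k$ real at $\theta$ is \emph{not} the same as $f$ real at $\theta$: the remaining monomial of the other trinomial could still lie off the line $\ell$. The paper closes this with a cascade you are missing: if $\hat F_1(\theta)\subset\ell$, then (using that $f_2,f_4\in\RR_*$) the shared monomials $z^{\a_2}$ and $f_4 z^{\a_3}$ also have phases in $\ell$, so two components of $\hat F_2(\theta)$ lie in $\ell$; repeating the dichotomy, either $F_2$ is colopsided (done) or $\hat F_2(\theta)\subset\ell$ as well, i.e.\ $f$ is real at $\theta$ --- and \emph{that} is what nonreality rules out. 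Your gloss ``push $\theta$ off a codimension-$0$ set'' and ``rules out \dots\ on a positive-dimensional set'' misreads the hypothesis: $f$ nonreal means there is no $\theta$ whatsoever at which $f$ is real.

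Two smaller points. You state colopsidedness backwards: $F_k$ is colopsided at $\theta$ exactly when its three phases \emph{do} lie in a common closed half-plane (your parenthetical ``largest intermediate angle exceeds the sum of the others'' is the correct criterion, and contradicts your main clause). This matters when you write ``the three phases cannot lie in a common half-plane and $F_k$ is colopsided'' --- the right reasoning is that two antipodal phases plus a third off their line \emph{do} fit in a closed half-plane, hence colopsided. And in the $\{\a_0,\a_1\}$ analysis you say that on $\cA_g$ for $g=f_1z^{\a_0}-f_3z^{\a_1}$ the two phases are antipodal; in fact they are equal there, and antipodal on $\cA_{f_1z^{\a_0}+f_3z^{\a_1}}$. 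The paper treats $g_+$ and $g_-$ separately by comparing $\hat F_1(\theta)$ and $\hat F_2(\theta)$ componentwise (they differ in one component for $g_+$, and in the third component for $g_-$ precisely when $f_2,f_4$ have opposite signs); you should carry out both cases with the signs straight.
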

\begin{proof}
 If, for $\theta\in \T^2$, two components of $\hat F_1(\theta)$
 is contained in a real subvector space
 $\ell\subset \CC$, then either $F_1$ is colopsided at $\theta$
 or $\hat F_1(\theta) \subset \ell$. However, the latter implies that
 two components of $\hat F_2(\theta)$ are contained in $\ell$. 
 Repeating the argument yields that either $f$ is real, or it is colopsided at $\theta$.
 
 Thus, the only binomials we need to consider is 
 $g_\pm(z) = f_1z^{\a_0} \pm f_3z^{\a_1}$.
 For each $\theta \in \cA_{g_+}$ the vectors 
 $\hat F_1(\theta)$ and $\hat F_2(\theta)$ differ in sign in their
 first component, and hence at least one is colopsided at 
 $\theta$, unless $f$ is real. 
 For each $\theta \in \cA_{g_-}$, the vectors
 $\hat F_1(\theta)$ and $\hat F_2(\theta)$ differ in signs 
 in the the last component only if $f_2$ and $f_4$
 differ in signs. If this is the case, then at least one is 
 colopsided at $\theta$ unless $f$ is real.
\end{proof}

\begin{lemma}
\label{lem:shellpointsofsystemiscontainedinclosure}
 Let $\theta\in \cA_{g_1}\cap\cA_{g_2}$
 for truncated binomials $g_1$ and $g_2$ of $F_1$ and $F_2$ respectively.
 If the Newton polytopes (i.e., line segments) of $g_1$ and $g_2$ are 
 nonparallel, then $\theta\in \overline{\cL}_f$.
\end{lemma}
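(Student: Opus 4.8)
The plan is to show that the hypotheses force $\theta$ to lie in the closure of $\cL_f = \cA_{F_1}\cap\cA_{F_2}$ by producing, for every small perturbation direction, a nearby point of $\cA_{F_1}\cap\cA_{F_2}$. First I would recall that $\theta\in\cA_{g_k}$ means the truncated binomial $g_k$ vanishes for some lift of $\theta$, i.e.\ the two monomials of $g_k$ are antipodal at $\theta$; equivalently $\theta$ lies on a hyperplane in the shell $\H_{F_k}$, namely the line $\{\,\<\a-\a',\theta\> \equiv \pi \pmod{2\pi}\,\}$ where $[\a,\a']$ is the Newton segment of $g_k$. Since the Newton polytopes of $g_1$ and $g_2$ are nonparallel, these two lines are transverse, so $\theta$ is an isolated point of $\H_{F_1}\cap\H_{F_2}$.

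The key step is then the following local statement: if $\theta_0\in\cA_{g_1}$ with $g_1$ a truncated binomial of $F_1$, then $\theta_0\in\overline{\cA}_{F_1}$, and moreover in every neighborhood of $\theta_0$ there are points of $\cA_{F_1}$ in both open half-planes bounded by the line through $\theta_0$ carrying $g_1$'s shell hyperplane. This is essentially the description of a trinomial coamoeba near its boundary (cf.\ Figure~\ref{fig:simplexcoamoeba} and the results of \cite{FJ15,NS13}): at a generic point of an edge-coamoeba hyperplane, $\cA_{F_1}$ occupies exactly one side locally, but as one moves along that hyperplane the occupied side is governed by the orientation coming from the outward normals of $\N_{F_1}$, and in particular $\cA_{F_1}$ does accumulate onto the hyperplane from the side dictated by the remaining monomial of $F_1$. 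I would make this precise by choosing coordinates so that the Newton segment of $g_1$ is horizontal with the third monomial of $F_1$ lying above it; then near $\theta_0$ the coamoeba $\cA_{F_1}$ contains a one-sided neighborhood of the hyperplane, and $\theta_0$ itself lies in $\overline{\cA}_{F_1}$. The same holds for $g_2$ and $F_2$ with its own transverse hyperplane.

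Now combine the two: the hyperplane $H_1\ni\theta_0$ for $g_1$ and $H_2\ni\theta_0$ for $g_2$ cross transversally at $\theta_0$, cutting a neighborhood into four quadrants. The coamoeba $\cA_{F_1}$ contains (locally) a full one-sided neighborhood of $H_1$, hence two opposite quadrants together with their common edge on $H_1$; similarly $\cA_{F_2}$ contains two opposite quadrants relative to $H_2$. Two opposite quadrants for the $H_1$-splitting and two opposite quadrants for the transverse $H_2$-splitting always share at least one open quadrant. That open quadrant lies in $\cA_{F_1}\cap\cA_{F_2} = \cL_f$, and it has $\theta_0$ on its boundary, so $\theta_0\in\overline{\cL}_f$, as desired. (If $F_k$ happens to be real at $\theta_0$, the fiber argument degenerates but $\theta_0\in\cA_{F_k}$ still holds, which is all that is used.)

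The main obstacle is the local one-sidedness claim in the second paragraph: asserting that $\cA_{F_1}$ accumulates onto its shell hyperplane from a definite side near $\theta_0$, rather than merely touching it at an isolated point. For a trinomial this follows from the explicit parametrization of the coamoeba, but one must check that $\theta_0$ is not an accidental intersection point of two distinct hyperplanes of $\H_{F_1}$ itself (where the picture could be more subtle) — this is why I would first wiggle $\theta_0$ along $H_1$, using that such bad points are finite in number along $H_1$, to reduce to a generic point of the hyperplane, and only afterward pass back to the limit to recover $\theta_0\in\overline{\cL}_f$. Once that reduction is in place, the quadrant-intersection argument is purely combinatorial and the rest is routine.
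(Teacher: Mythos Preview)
Your approach is close to the paper's, but the wiggling step has a genuine gap, and the paper's argument avoids it in a way worth noting.

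The paper begins with the case distinction you relegate to a parenthetical: if both $F_1$ and $F_2$ are real at $\theta$, then $\theta\in\cL_f$ outright and there is nothing to prove. Otherwise, say $F_1$ is nonreal at $\theta$; then indeed $\cA_{F_1}\cap N_\theta$ is exactly an open half-plane bounded by $H_1$ for small $N_\theta$, as you assert. But the paper does \emph{not} try to establish the analogous half-plane picture for $\cA_{F_2}$. Instead it uses only that the connected components of $\T^2\setminus\cA_{F_2}$ are convex, together with $\theta\in\overline{\cA}_{F_2}$: if $\cA_{F_2}$ missed the half-plane $\cA_{F_1}\cap N_\theta$ entirely, convexity would force the boundary of $\cA_{F_2}$ near $\theta$ to lie in the line $H_1$, hence $\cA_{g_2}\subset H_1$, hence $\N(g_1)\parallel\N(g_2)$, contradicting the hypothesis.

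Your quadrant argument, by contrast, needs the local half-plane description for \emph{both} $F_k$ simultaneously, which requires both to be nonreal at $\theta$. Your proposed rescue---wiggle $\theta_0$ along $H_1$ to a generic point of $H_1$ and then pass to the limit---does not work: moving along $H_1$ takes you off $H_2=\cA_{g_2}$, so the wiggled point no longer satisfies the hypotheses of the lemma and you have no reason to expect it to lie in $\overline{\cL}_f$. Likewise, your parenthetical fallback (``$\theta_0\in\cA_{F_k}$ still holds, which is all that is used'') is not enough: the quadrant-intersection step uses the full half-plane containment, not merely membership of the single point $\theta_0$. The fix is exactly the paper's: split off the both-real case first, and in the remaining case replace the second half-plane analysis by the convexity-of-complement argument, which needs no genericity assumption on $\theta$.
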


\begin{proof}
 If $F_1$ and $F_2$ are both real at $\theta$, then $\theta \in \cL_f$.
 If $F_1$ is nonreal at $\theta$, then for a sufficiently small neighborhood
 $N_\theta\subset\RR^2$, it holds that
 \[
  \cA_{F_1}\cap N_\theta = \{\varphi\,|\, \<\varphi,\n\> > \<\theta,\n\>\}\cap N_\theta,
 \]
 where $\n$ is a normal vector of $\N(g_1)$. Since connected components of the
 complement of $\cA_{F_2}$ are convex,
 either $\cA_{F_2}$ intersect $\cA_{F_1}$ in $N_\theta$, or the boundary of
 $\cA_{F_2}$ is contained in the line $\ell = \{\varphi\,|\, \<\varphi,\n\> =\<\theta,\n\>\}$.
 As the boundary of $\cA_{F_2}$ contains $\cA_{g_2}$, it holds in the latter case
 that $\cA_{g_2}\subset\ell$, which in turn implies that $\n$ is a normal
 vector of $\N(g_2)$, contradicting our assumptions. 
 We conclude that $\cA_{F_2}\cap\cA_{F_1}\cap N_\theta \neq \emptyset$.
 Since this holds for any sufficiently small neighborhood $N_\theta$, 
 the result follows.
\end{proof}

\begin{example}
\label{ex:LopsidedSystem}
Consider the system
\[
  f(z) = \left\{\begin{array}{llllll}
          f_1z_1z_2^2 +  1 +  f_2 z_1z_2\\
          f_3 z_1^2 z_2 + 1 +  f_4 z_1z_2.
        \end{array}\right.
\]
We have that $\Vol(A) = 3$. Hence $H$ divides $\T^2$
into three cells. The lopsided coamoeba $\cL_f$, and
the hyperplane arrangement $H$, can be seen in
Figure~\ref{fig:systeminteriorexplained2}.
In the first two picture, the generic respectively real 
situation when $f_2$ and $f_4$ differs in signs. 
In last two pictures, the generic respectively real
situation when $f_2$ and $f_4$ have equal signs.
In the generic case, the lopsided coamoeba $\cL_f$ consist
of three polygons. When deforming from the generic to
the real case, we observe the following behavior.
Some polygons of $\cL_f$ deform into single points - by necessity
points contained in the lattice $P$.
Some pairs of polytopes of $\cL_f$ deforms to nonconvex polygons,
typically with a single intersection point. 
Our proof of Theorem~\ref{thm:FewnomialSimplexCircuits}
is based on the observation that, when deforming from a generic to a real system,
at most two polytopes of $\cL_F$ deforms a nonconvex polygon 
intersecting $H$. 

 \begin{figure}[h]
\centering
\includegraphics[height=26mm]{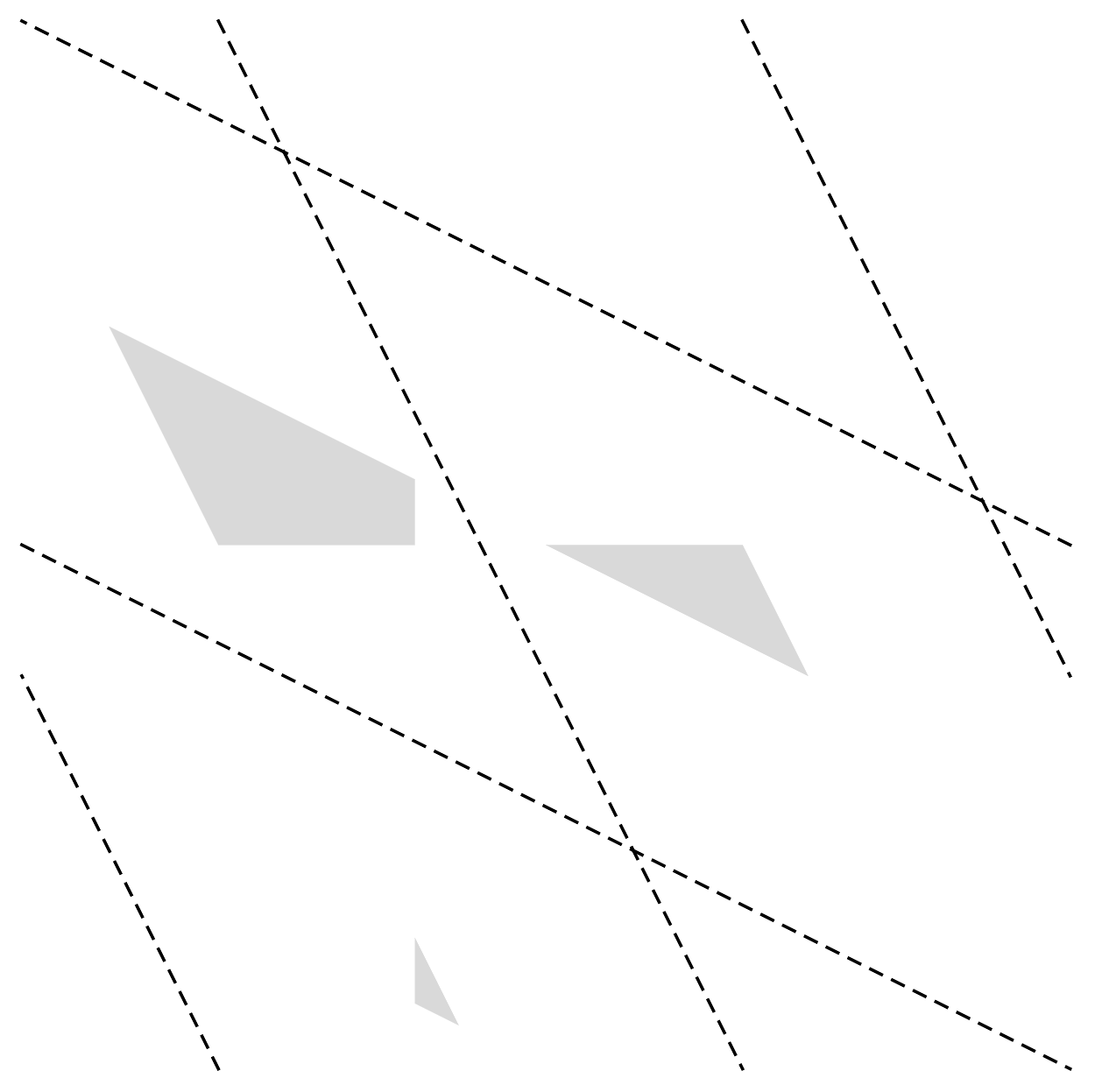}
\hspace{4mm}
\includegraphics[height=26mm]{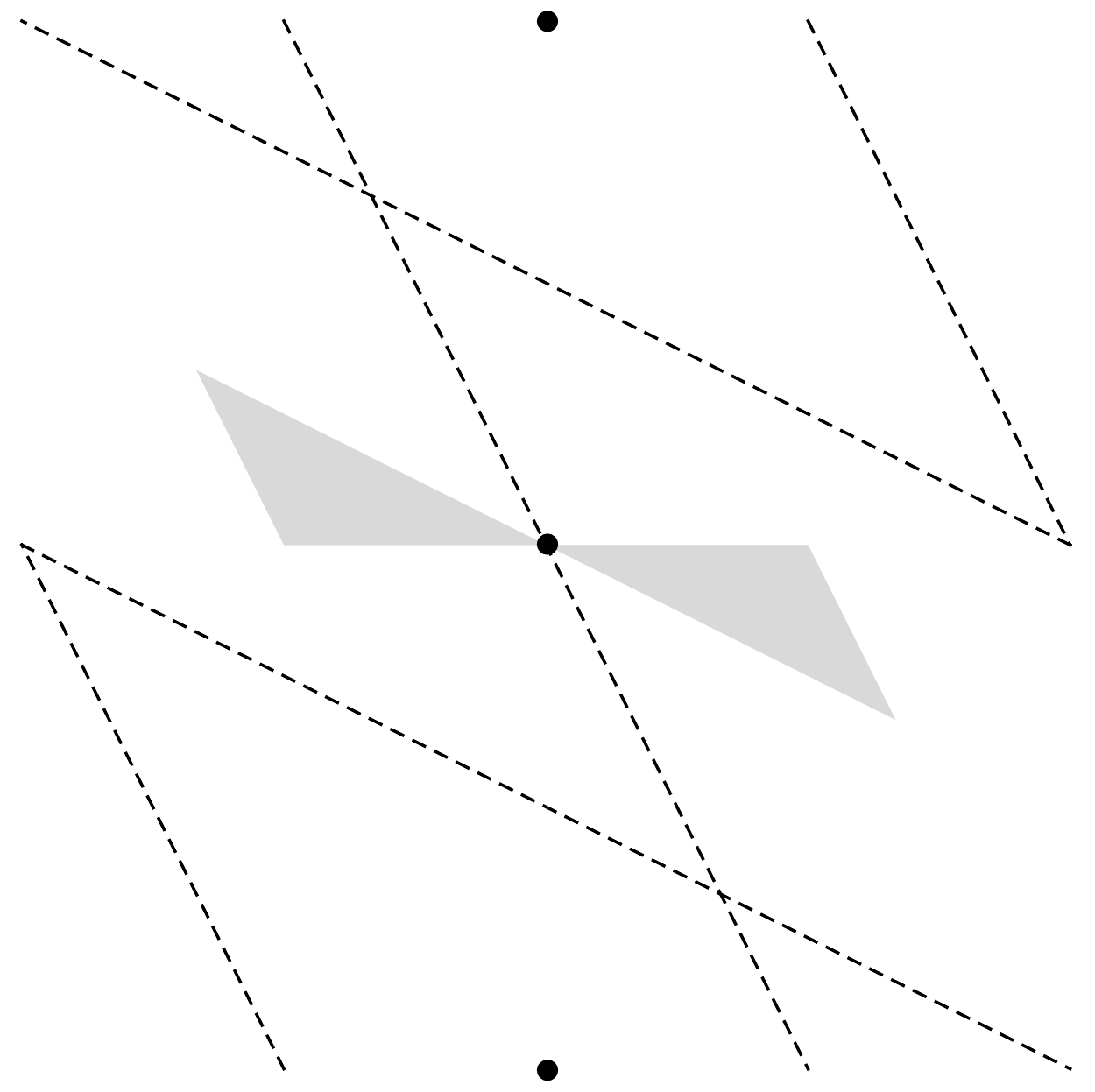}
\hspace{4mm}
\includegraphics[height=26mm]{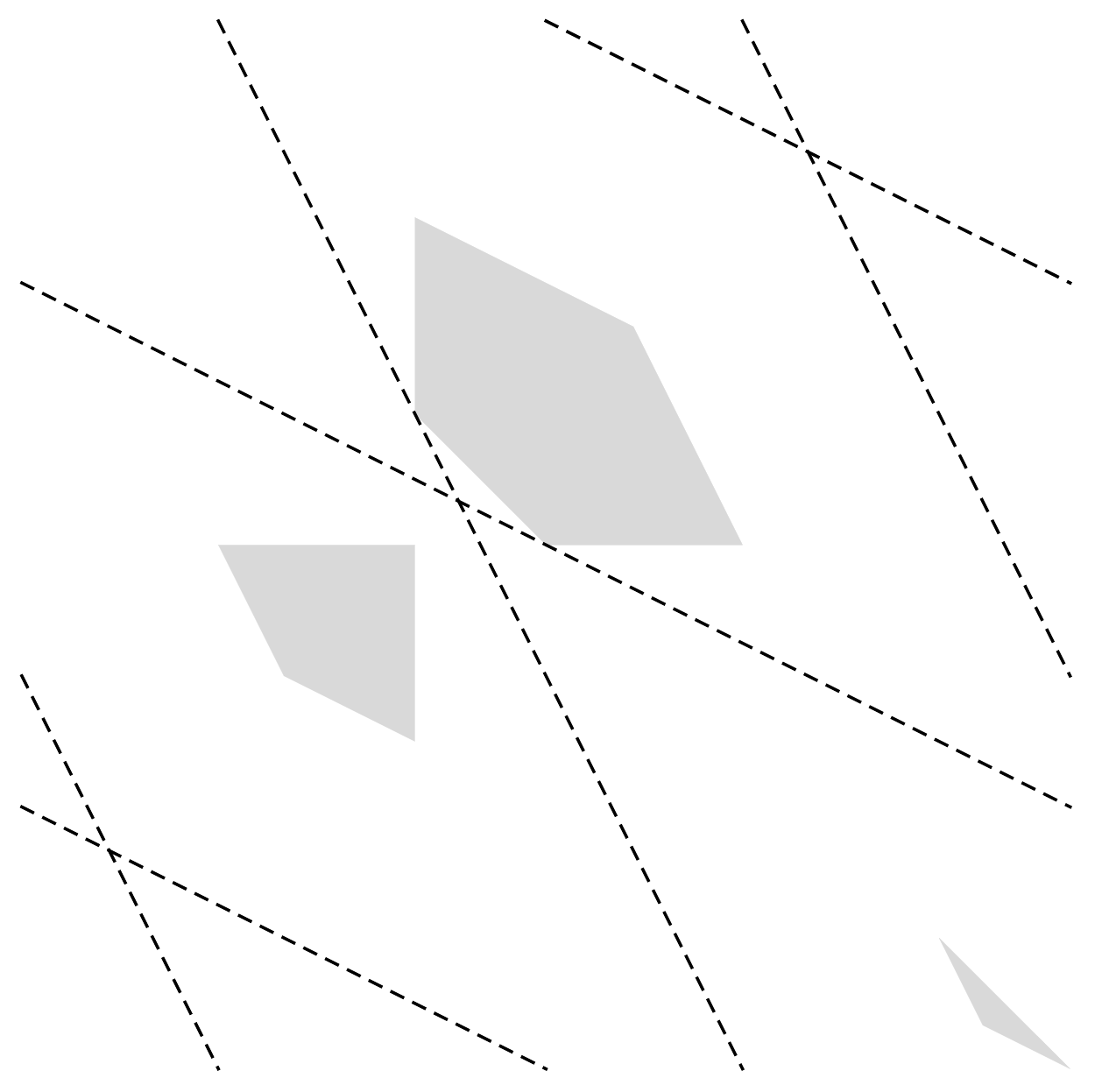}
\hspace{4mm}
\includegraphics[height=26mm]{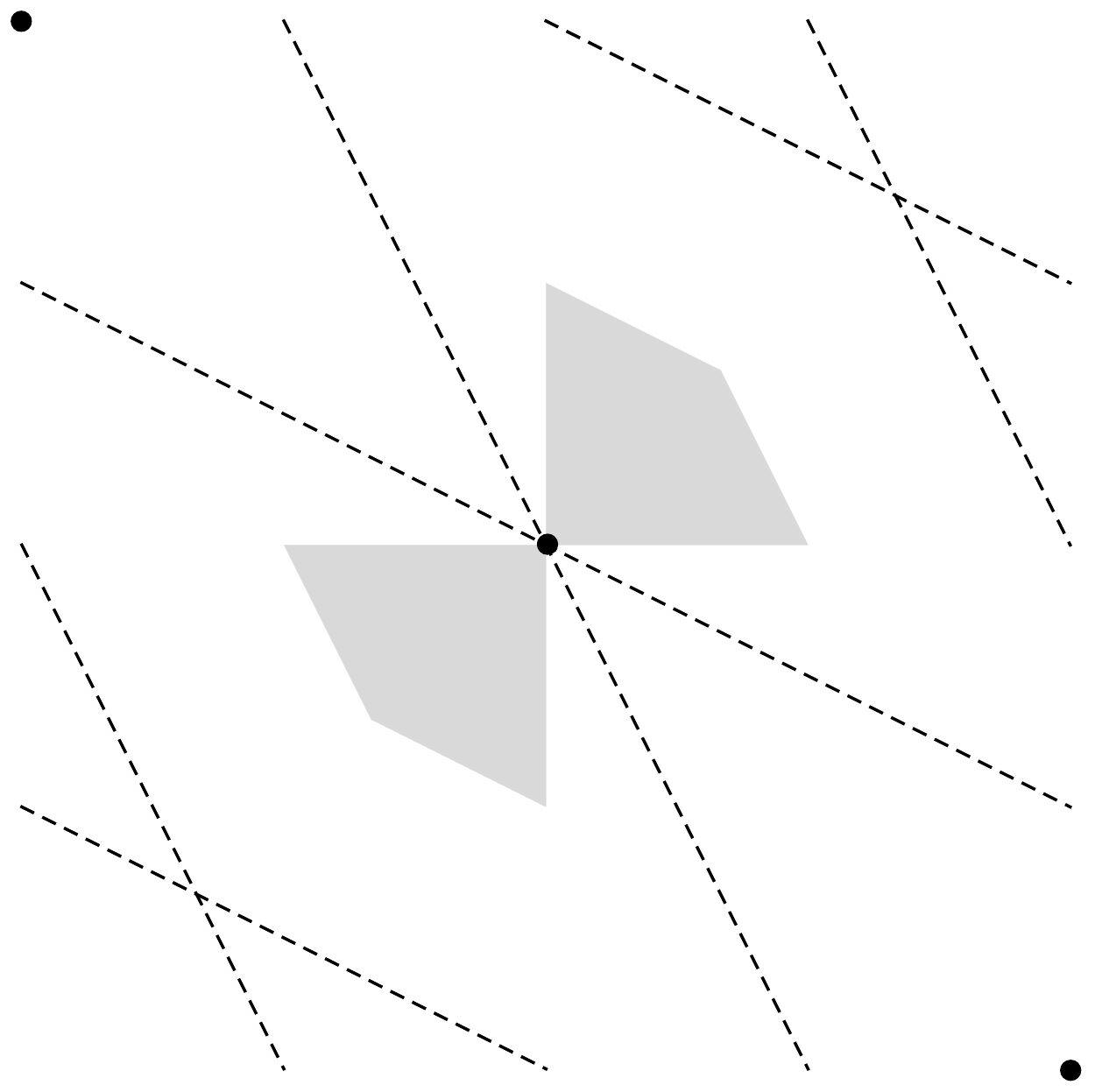}
\caption{The lopsided coamoebas from Example~\ref{ex:LopsidedSystem}.}
\label{fig:systeminteriorexplained2}
\end{figure}
\end{example}

\subsection{Proof of Theorem \ref{thm:FewnomialSimplexCircuits}}
Let us consider the auxiliary binomials
\[
\begin{array}{lll}
g_1(z) = f_1z^{\a_0} - z^{\a_2},&  & g_2(z) = f_3z^{\a_1} - z^{\a_2},\\
h_1(z) = f_1z^{\a_0} + z^{\a_2}, & \quad \text{and} \quad & h_2(z) = f_3z^{\a_1} + z^{\a_2}.
\end{array}
\]
The vectors $\a_2 - \a_0$ and $\a_2-\a_1$ span the simplex $\N_A$, 
hence the hyperplane arrangement 
$H = \cA_{g_1}\cup\cA_{g_2}$ 
divides $\T^2$ into $\Vol(A)$-many
parallelograms with the points $P = \cA_{h_1}\cap \cA_{h_2}$ as 
their centers of mass.

If $f$ is nonreal, then Lemma~\ref{lem:binomialcoamoebaincomplement}
shows that $H \subset \T^2\setminus\cL_f$, and 
Lemma~\ref{lem:shellpointsofsystemiscontainedinclosure}
shows that $P \subset \overline{\cL}_f$. 
By Lemma~\ref{lem:argumentsvarycontinuously} we find that
$\cL_f$ has at most $\Vol(A)$-many connected components.
Hence, $\cL_f$ has at exactly one connected component in each of the cells of $H$,
and the number of roots of $f(z) = 0$ projected by the argument map into each such
component is exactly one.

Consider now the real case when $f_2$ and $f_4$ differs in signs.
Then, at least one of $F_1$ and $F_2$ are colopsided at the 
intersection points $\cA_{g_1}\cap\cA_{g_2}$.
Thus, if $\Arg^{-1}(\theta)$ contains a root of $f(z) = 0$, then 
a sufficiently small neighborhood $N_\theta$ intersect at most two
of the cells of the hyperplane arrangement $H$. 
Hence, using Lemma~\ref{lem:argumentsvarycontinuously} and
wiggling the arguments of coefficients of $f$ by $\varepsilon$, 
$N_\theta$ intersect at most two of the polygons
of $\cL_{f^\varepsilon}$. Hence, 
there can be at most two roots contained in $\Arg^{-1}(\theta)$.

Consider now the case when $f$ real with $f_2$ and $f_4$ of equal signs. 
In this case, a point $\theta\in \cA_{g_1}\cap\cA_{g_2}$ can be contained in
$\cL_f$. See the left picture of Figure~\ref{fig:systeminteriorproof}, 
where the hyperplane arrangement $H$
is given in black, and the shells $\H_{F_1}$ and $\H_{F_2}$ are given 
in red and blue respectively, with indicated orientation.
Wiggling the arguments of coefficient $f_1$ and/or $f_3$ by $\varepsilon$, 
we claim the we obtain a situation as in the right picture
of Figure~\ref{fig:systeminteriorproof}. That is, at most two polygons 
of $\cL_{f^\varepsilon}$ will intersect a small neighborhood $N_\theta$ of $\theta$.
Let us prove this last claim.

Let $f$ be generic, with $f_2$ and $f_4$ real and of equal signs.
The hyperplanes $\cA_{g_1}$ and $\cA_{g_2}$ (locally) divides the plane into
four regions.  We can assume that $\a_2 = \0$. Then, $\cA_{g_1}$ consist of
all $\theta$ such that $\hat f_1(\theta) = 1$, and $\cA_{g_1}$ consist of
all $\theta$ such that $\hat f_3(\theta) = 1$. Thus, locally, the cells of
$H$ can be indexed by the signs of the imaginary parts of $\hat f_1(\theta)$ and $\hat f_3(\theta)$.
Assume that $\tilde \theta\in \cL_f\cap N_\theta$. Then neither $F_1$ nor $F_2$ is colopsided
at $\tilde \theta$. Observe that $\hat f_2(\tilde \theta) = \hat f_4(\tilde \theta)$, since
$f_2$ and $f_4$ has equal sign. 
We find that
\[
\sgn(\Im(\hat  f_1(\tilde \theta))) = - \sgn(\Im(\hat  f_2(\tilde \theta))) = - \sgn(\Im(\hat  f_4(\tilde \theta))) =  \sgn(\Im(\hat  f_4(\tilde \theta))),
\]
where the first and the last equality holds since neither $F_1$ nor $F_2$ is 
colopsided at $\tilde \theta$. 
This implies that polygons of $\cL_f$ intersecting a 
small neighbourhood of $\theta$ are necessarily contained in the
cells of $H$ which corresponds to that the imaginary parts of 
$\hat f_1(\tilde \theta)$ and $\hat f_4(\tilde \theta)$
have equal signs. As there are two such cells, we find that there are at most
two polygons of $\cL_f$ intersecting a small neighbourhood of $\theta$.

\begin{figure}[h]
\centering
\includegraphics[height=30mm]{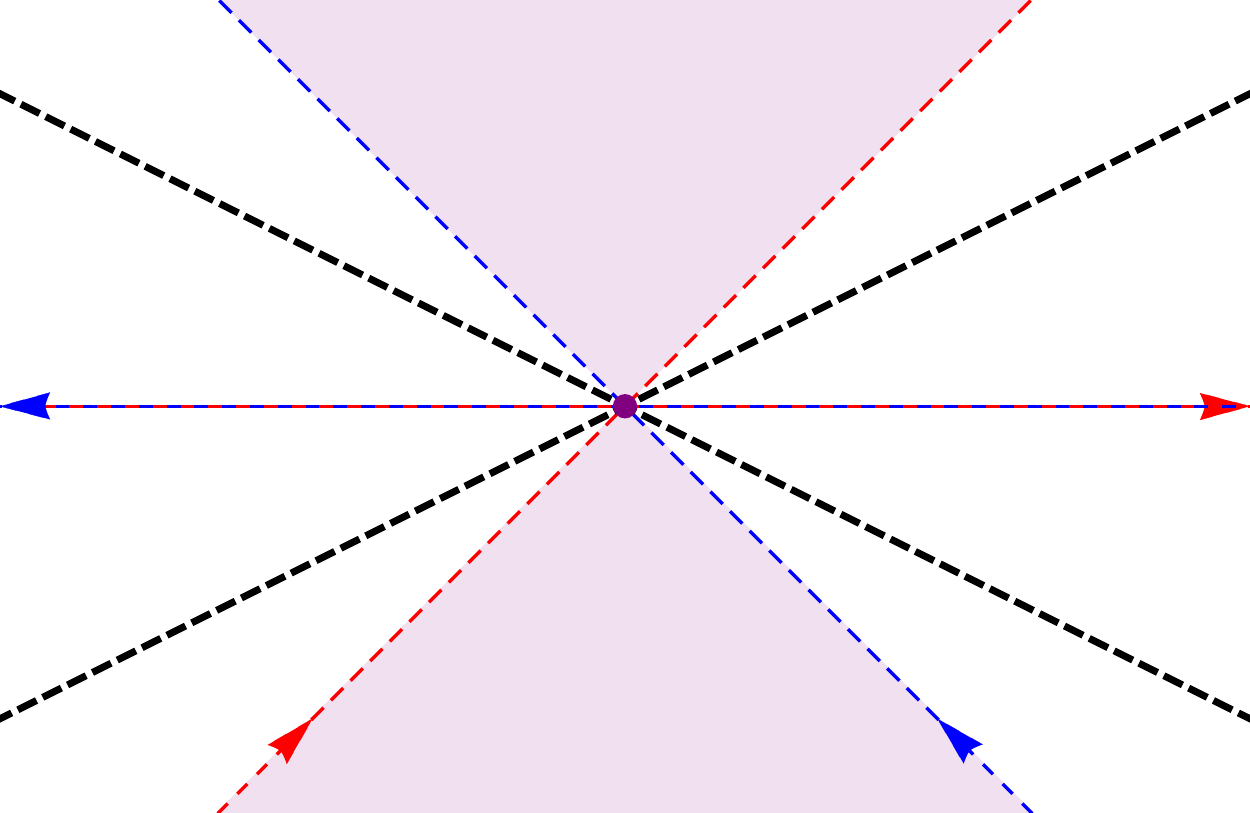}
\hspace{10mm}
\includegraphics[height=30mm]{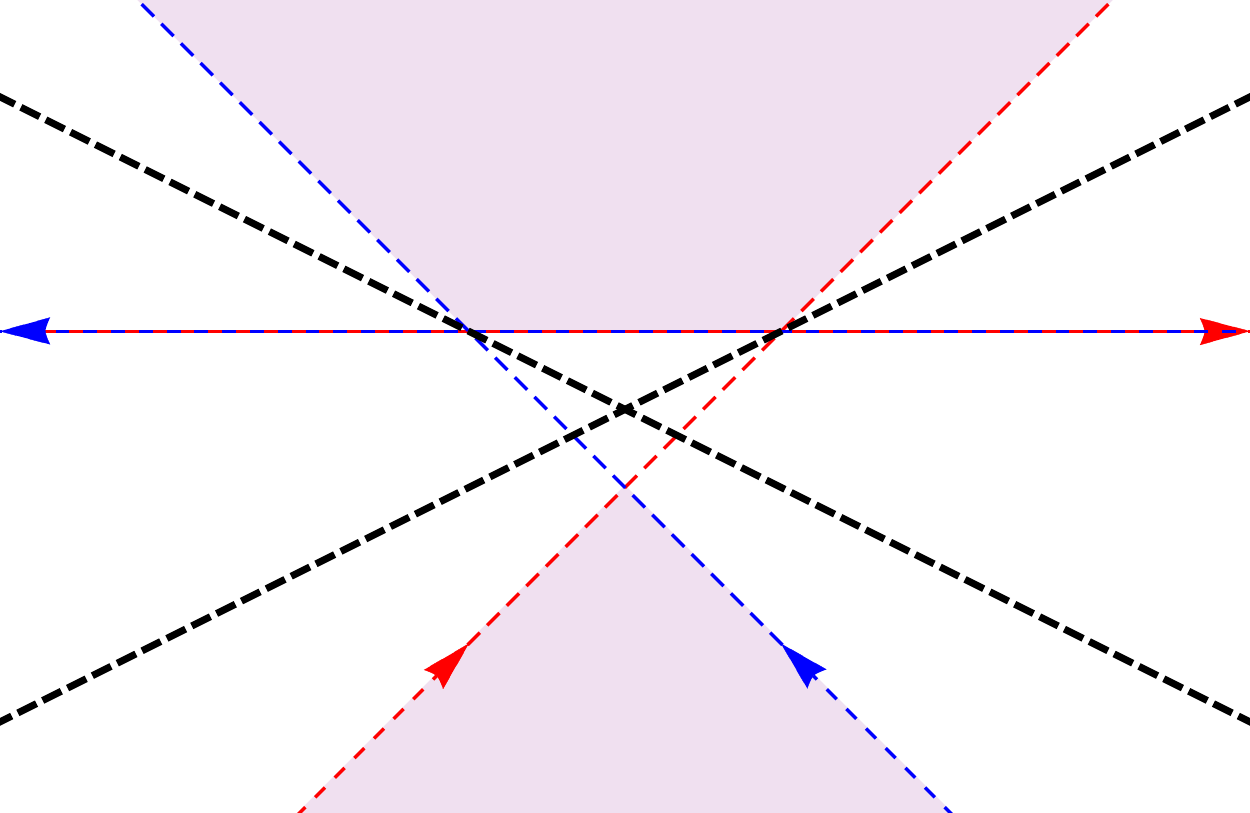}
\caption{To the left: the coamoeba $\cL_f$ close to a point of
$\cA_{g_1}\cap\cA_{g_2}$
when $f_2$ and $f_4$ have equal in signs and $f_1$ and $f_3$ are real.
To the right: the same picture after wiggling the argument of $f_1$ or $f_3$.}
\label{fig:systeminteriorproof}
\end{figure}


\end{document}